\newcommand{\dist}{d_{\mathcal{G}}}
\definecolor{mint}{RGB}{62, 180, 137}
\newcommand{\nbhd}{\mathcal{N}_{\mathcal{G}}}
\newcommand{\knbhd}{\mathcal{N}_{\mathcal{G}}^\kappa}
\newcommand{\minsv}{\sigma_{min}}
\newcommand{\pc}{\tilde{\Psi}}
\newcommand{\pct}{\Psi}
\newcommand{\dpc}{\tilde{\psi}}
\newcommand{\eps}{\ensuremath{\varepsilon}\xspace}
\newcommand{\del}{\ensuremath{\delta}\xspace}
\newcommand{\cost}{\mathrm{cost}}
\newcommand{\calV}{\ensuremath{\mathcal{V}}\xspace}
\newcommand{\calT}{\ensuremath{\mathcal{T}}\xspace}
\newcommand{\calS}{\ensuremath{\mathcal{S}}\xspace}
\newcommand{\fvtest}[2]{\ifthenelse{\boolean{isfullversion}}{#1}{#2}}
\DeclareMathOperator*{\argmin}{arg\,min}
\newtheorem{theorem}{Theorem}
\newtheorem{corollary}{Corollary}
\newtheorem{lemma}{Lemma}
\newtheorem{definition}{Definition}
\newtheorem{assumption}{Assumption}
\title{\LARGE \bf
 Distributed Truncated Predictive Control for Networked Systems under Uncertainty: Stability and Near-Optimality Guarantee
}
\author{Eric Xu$^{2,\natural}$, Soummya Kar$^{1}$, and Guannan Qu$^{1}$% <-this % stops a space
\thanks{*This work was supported by NSF Grants 2154171, 2339112, 2330196, CMU CyLab Seed Funding, C3 AI Institute. }
\thanks{$^{1}$Department of Electrical and Computer Engineering, Carnegie Mellon University. Emails: soummyak@andrew.cmu.edu and gqu@andrew.cmu.edu}%
\thanks{$^{2}$Department of Electrical Engineering and Computer Science, University of California, Berkeley. Email: erx@berkeley.edu.}
\thanks{$^\natural$Work performed while author was at CMU.}
}
\begin{document}

\maketitle
\thispagestyle{empty}
\pagestyle{empty}

%%%%%%%%%%%%%%%%%%%%%%%%%%%%%%%%%%%%%%%%%%%%%%%%%%%%%%%%%%%%%%%%%%%%%%%%%%%%%%%%
\begin{abstract}
We study the problem of distributed online control of networked systems with time-varying cost functions and disturbances, where each node only has local information of the states and forecasts of the costs and disturbances. We develop a distributed truncated predictive control (DTPC) algorithm, where each node solves a ``truncated'' predictive optimal control problem with horizon $k$, but only involving nodes in a $\kappa$-hop neighborhood (ignoring nodes outside). We show that the DTPC algorithm satisfies input-to-state stability (ISS) bounds and has regret decaying exponentially in $k$ and $\kappa$, meaning a short predictive horizon $k$ and a small truncation radius $\kappa$ is sufficient to achieve near-optimal performance. Furthermore, we show that when the future costs and disturbances are not exactly known, the regret has exponentially decaying sensitivity to the forecast errors in terms of predictive horizon, meaning near-term forecast errors play a much more important role than longer-term forecasts.
\end{abstract}

%%%%%%%%%%%%%%%%%%%%%%%%%%%%%%%%%%%%%%%%%%%%%%%%%%%%%%%%%%%%%%%%%%%%%%%%%%%%%%%%
\section{INTRODUCTION}

The control of networked systems has retained great popularity for the past few decades because of its myriad of applications  
\cite{zhao2014design, sinopoli2003distributed, varaiya2013max, li2017dynamical}. One critical challenge in the control of networks is the distributed nature of the decision making problem: the system consists of nodes where each node may only have access to local state information. %Another equally difficult issue is the time-varying and online nature of many decision-making problems. Coupled with the challenge of uncertain/noisy predictions, 
This control problem becomes even more challenging when we consider the time-varying and online nature of the decision-making process with potentially inaccurate predictions into the future.
Thus, research on efficiently controlling networked systems in a distributed and online manner has become especially prevalent. One potential solution to the aforementioned control problem that has gained great traction over the years is distributed Predictive Control (PC) \cite{stewart_cooperative_2010, Jia2002, Alonso2023slsmpc}. Despite its popularity, the analysis of distributed PC methods only shows asymptotic guarantees of stability and robustness whereas performance guarantees, such as the regret and optimality of these algorithms, are lacking.  
Recently, progress towards showing such performance guarantees has been made in the distributed control and PC literatures in parallel. 
  
Much progress has been made in the synthesis of distributed controllers that are optimal relative to their sparse structure \cite{anderson2019system,fattahi2020efficient, Ghai20222regmin, fazelnia2017}. One promising distributed control method for networked systems is the utilization of the \textit{spatial decay} property in the optimal centralized controller, i.e., the control gain between the control action of agent $i$ and the state of agent $j$ decays based on the (graph) distance between the two agents. The exact decay rate is generally problem-dependent \cite{bamieh2002distributed, motee2008optimal, Shin2022sensitivity, Sungho2023nearopt, Zhang2023polit}. Particularly of interest is the decay rate for the problem described in \cite{Sungho2023nearopt}: they show that the truncation of the centralized solution to the finite-horizon linear quadratic cost (LQC) problem to a \textit{$\kappa$-hop distributed controller} (i.e. a controller whose gains vanish between nodes that are greater than a distance $\kappa$ away) has near-optimal performance relative to the centralized solution.

On a different note, the PC literature contains a large body of work on stability and robustness guarantees\cite{MAYNE2000789}, \cite{Bemporad1999} and has recently developed online performance guarantees in both LTI and LTV (Linear Time-Varying) settings \cite{Yu2020predictions, Zhang2021onlinelqr, Lin2021perturb, Lin2022bounded}. 
The critical property established in \cite{Lin2021perturb} is the \textit{temporal decay} between the predictive states generated by PC controllers that have different initial conditions; they leverage this property to determine ISS (Input-to-State Stability) and regret bounds of PC in the LTV setting. In the sequel \cite{Lin2022bounded}, the authors derive a so-called `pipeline' theorem (Theorem 3.3) which produces regret bounds for PC in several different settings under uncertainties.

Bolstered by the aforementioned success of PC and spatial decay-based distributed control, a natural question to ask is: how can we combine the two to attain efficient distributed and online control for networked systems? Directly combining the works in \cite{Sungho2023nearopt}, \cite{Lin2021perturb}, and \cite{Lin2022bounded} is nontrivial because the proofs in \cite{Lin2021perturb} and \cite{Lin2022bounded} rely on the controller being fully centralized: every node observes the global state and makes fully-accurate predictions about the system. In the distributed setting however, each node has access only to its own state information and that of the nodes nearby it, and thus, it cannot make fully-accurate predictions about the whole system. In summary, the problem is: \textit{how can nodes make accurate predictions about the whole system if they only have access to local information? Can we design a distributed PC algorithm that adheres to this information constraint and has comparable performance guarantees to centralized PC?} We deem this problem the \textit{localized information-constrained predictive control problem} (LICPC Problem)

\textbf{Contribution}: To address the LICPC problem, we consider an online optimal control problem for a networked LTI system with time-varying cost functions and disturbances. We develop a graph-truncated PC-style algorithm, where the key ingredient is that each agent solves a localized PC problem using local information from its $\kappa$-hop neighbourhood in the graph. Upon solving the localized PC problem,  each agent deploys its resulting immediate control input. 
We show that this algorithm attains Input to State Stability (ISS) and exponentially decaying dynamic regret in the predictive horizon variable $k$ and the decentralization factor $\kappa$, meaning that for sufficient choices of $k$ and $\kappa$, our online algorithm produces controllers that enjoy stability and near-optimal regret guarantees. The key technical contribution underlying these guarantees is the spatial decay property exhibited by the centralized PC controller. Leveraging the spatial decay property, we show that agents are effectively able to accurately predict the whole system's state using information within its $\kappa$-hop, thereby solving the LICPC problem. 
Also because of these accurate-enough predictions, our algorithm obtains the desired stability and regret properties. 

Furthermore, we develop an uncertainty-aware version of this distributed PC-like algorithm, where the future forecasts for cost functions and disturbances may contain error. We derive ISS and regret bounds for the uncertainty-aware algorithm which mirror the bounds enjoyed by the aforementioned PC-like algorithm assuming perfect forecasts. Critically, we show that the regret's sensitivity to forecast error is exponentially decaying in the prediction horizon, meaning near-term forecasts are much more important than longer term forecasts. %The main insight that makes the analysis possible in the uncertainty setting is a "prediction decay" property of the centralized PC controller. %\guannan{needs to add the uncertainty stuff} 

We note that our analysis focuses solely on LTI systems because it facilitates our sensitivity analysis of the PC problem which allows us to establish the desired spatial decay property. However, we conjecture that stability and regret bounds for our algorithm in the LTV setting are achievable with further analysis similar to \cite{Lin2021perturb}. %Thus, predictions that only use local information lead to accurate enough forecasts of the whole system for large enough $\kappa$, thereby solving the LICPC problem. Utilizing these accurate enough predictions then leads us to the desired stability and regret guarantees. 

\textbf{Notation}: We denote the cardinality of a set $S$ by $\lvert S\rvert$. The set of non-negative integers is $\mathbb{Z}_{+}$, the set of reals is $\mathbb{R}$, and the set of $m\times n$ matrices is $\mathbb{R}^{m\times n}$. Furthermore, we denote by $[T] := \{0,1,...,T\}$. The operator norm of a matrix $A$ is denoted as $\lVert A \rVert$ and its minimum singular value as $\minsv(A)$. The 2-norm of a vector $v$ is $\lVert v\rVert$. We further denote $A \succ (\succeq) ~0$ to mean that $A$ is a positive (semi)definite matrix and for a matrix $B$, the notation $A \succ B$ means that $A-B \succ 0$. The Moore-Penrose pseudo-inverse of a matrix $M$ is denoted $M^\dagger$. We define the notation $v_{0:T}$ for $v_t$ indexed by $t \in [T]$ as the set: $\{v_0, v_1,...,v_T\}$. For a matrix $A$ we denote the sub-block indexed by $i, j \in \mathcal{V}$, where $\mathcal{V}$ is an index set, as $A[i,j]$; hence, $A$ can be written as a sequence of its sub-blocks: $A = (A[i,j])_{i,j \in \mathcal{V}}$ (similar for vectors).
 We use $O(\cdot)$, $o(\cdot)$, and $\Theta(\cdot)$ as big-$O$, little-$o$, and big-theta notations respectively.

\section{Problem Setup and Preliminaries}
\subsection{Problem Setting}
We consider a graph $\mathcal{G} := \{\mathcal{V}, \mathcal{E}\}$ with $N$ nodes, where $\mathcal{V}$ is the set of nodes/agents, and $\mathcal{E}$ is the set of edges. Let $\nbhd[i] := \{i\} \cup \{j \in \mathcal{V}: (i,j) \in \mathcal{E}\}$ be the neighbourhood of node $i$ and $\dist(i,j)$ be the shortest path distance between nodes $i$ and $j$ based on the graph $\mathcal{G}$. We denote the $\kappa$-hop neighbourhood of a node $i$ as $\nbhd^\kappa[i] := \{j \in \mathcal{V}: \dist(i,j) \leq \kappa\}$ and its boundary as $\partial\nbhd^\kappa[i] := \{j \in \mathcal{V}: \dist(i,j) = \kappa\}$. 

Given the graph $\mathcal{G}$ we can write the LTI dynamics with disturbance $w$ at a node $i$ as
\begin{equation}\label{eq:LTInode}x_{t+1}[i] = \sum_{j \in \nbhd[i]}A[i,j]x_t[j] + B[i,j]u_t[j] + w_t[i].\end{equation}
Here, we take $x_t[i]\in\mathbb{R}^{n_{x_i}}$, $u_t[i] \in \mathbb{R}^{n_{u_i}}$, and $w_t[i] \in \mathbb{R}^{n_{x_i}}$ to be the state, control action, and disturbance at node $i$ and time $t \in \mathbb{Z}_{+}$ respectively; furthermore, $A[i,j] \in \mathbb{R}^{n_{x_i} \times n_{x_j}}$ and $B[i,j] \in \mathbb{R}^{n_{x_i}\times n_{u_j}}$. %The total number of states and control inputs are then $n_x := \sum_{i \in \mathcal{V}}n_{x_i}$ and $n_u := \sum_{i \in \mathcal{V}}n_{u_i}$. 
Naturally, we can stack variables across nodes and rewrite \eqref{eq:LTInode} as:
\begin{equation}\label{eq:LTIcent}
    x_{t+1} = Ax_t + Bu_t + w_t,
\end{equation}
where $x_t = (x_t[i])_{i \in \mathcal{V}}$, $u_t = (u_t[i])_{i \in \mathcal{V}}$, and $w_t = (w_t[i])_{i \in \mathcal{V}}$ are the centralized state, control input, and disturbance. Note, when we define $A = (A[i,j])_{i,j \in \mathcal{V}}$ and $B = (B[i,j])_{i,j \in \mathcal{V}}$, we remark that these matrices are networked in the sense that for any pair of nodes $(i,j) \not\in \mathcal{E}$, the submatrices $A[i,j]$ and $B[i,j]$ are 0, i.e., they respect the graph topology of $\mathcal{G}$.

With the above networked LTI system, we define the networked online control problem:
\begin{align}\label{eq:nc_prob}
    \min_{x_{0:T},u_{0:T-1}}&\sum_{i \in \mathcal{V}} \left(\sum_{t=0}^{T}f_t[i]( x_t[i])  + \sum_{t=0}^{T-1} c_{t+1}[i](u_{t}[i])\right), \nonumber \\
\mathrm{s.t.} \: \: &x_{t+1} = Ax_t + Bu_t + w_t, \quad  \forall t \in [T-1], \nonumber \\
&x_0 = \bar{x}, 
\end{align}
for which the time varying cost functions $f_t[i]$ and $c_t[i]$ are decentralized. We can aggregate these costs into centralized ones through summation, i.e., $f_t(x_t) := \sum_{i \in \mathcal{V}}f_t[i](x_t[i])$ and similarly for $c_t(u_{t-1})$. 

Given the dynamical system from before, we introduce two standard concepts:
\begin{definition}\label{def:stab}
    The system $(A,B)$ is $(L,\gamma)$-stabilizable if there exists a controller $K$ such that for $L > 1$, $\gamma \in (0,1)$, and $\lVert K \rVert \leq L$, we have that $\lVert (A-BK)^t\rVert \leq L\gamma^t$ for all $t \in \mathbb{Z}_{+}$.
\end{definition}
We also define the stronger concept of the controllability index of a system $(A,B)$:
\begin{definition}\label{def:cntrlidx}
    Given that $A$ is an $n\times n$ matrix and the system $(A,B)$ is controllable, there exists a positive integer $d \leq n$ such that the reduced controllability matrix
    $\mathcal{C}^{d} := \begin{bmatrix}
        B & AB & \cdots & A^{d-1}B
    \end{bmatrix}$
    is of full row rank.
\end{definition}

%\subsection{Predictive Control}
\textbf{Exact exogenous forecast vs uncertain exogenous forecast.} The cost functions  $f_t[i], c_{t+1}[i]$ and disturbance $w_t[i]$ for agent $i$ are time varying, and we assume that forecasts\footnote{Throughout this paper, we use ``forecast'' to refer to forecasts of exogenous quantities like the cost functions and disturbance in the future. We use ``prediction'' to refer to the predicted states in predictive control. } are available to each agent for their future costs and disturbances. We will consider two types of forecast. For exact forecast, at time $t$ each agent $i$ knows exactly  $f_{t+\tau}[i], c_{t+1+\tau}[i]$ and disturbance $w_{t+\tau}[i]$ for the future $k$ steps $\tau = 0,\ldots, k-1$. For the uncertain forecast case, each agent only have inexact values for the future cost functions and disturbances up to $k$ steps. To represent such inexact forecast, we let our ground truth disturbances and costs be parameterized by $\theta^*_t = (\theta^*_t[i])_{i\in\calV}$, i.e. $f_t(x_t,\theta_t^*) = \sum_{i\in\calV}f_t[i](x_t[i],\theta_t^*[i])$ and similarly for $c_{t+1}(u_t,\theta_t^*)$, and $w_t(\theta_t^*) = (w_t[i](\theta_t^*[i]))_{i\in\calV}$. Hence, the networked control problem in \eqref{eq:nc_prob} can be rewritten as 
\begin{align}\label{eq:nc_prob_unc}
    \min_{x_{0:T}, u_{0:T-1}}&\sum_{t=0}^{T}f_t(x_t,\theta_t^*) + \sum_{t=0}^{T-1}c_{t+1}(u_t,\theta_t^*), \nonumber  \\
    \mathrm{s.t.} \;\; &x_{t+1} = Ax_t + Bu_t + w_t(\theta_t^*), \: \: \forall t \in [T-1] \nonumber \\
    &x_0 = \bar{x},  
\end{align}
Under such parameterization, each agent $i$ knows the functional form of $f_t[i]$,$c_{t+1}[i]$,$w_t[i]$ but does not have exact values of $\theta_t^*$. Instead, at time $t$, it will only have inexact forecasts of $\theta_{t+\tau}^*$ for $\tau=0,\ldots, k-1$.\footnote{Notation-wise, in the exact forecast case, $f_t[i](x_t[i]),c_{t+1}[i](u_t[i]),w_t[i]$ will not be parameterized by $\theta_t[i]$. In the uncertain forecast case, $f_t[i](x_t[i],\theta_t[i])$, $c_{t+1}[i](u_t[i],\theta_t[i])$, $w_t[i](\theta_t[i])$ will be parameterized by $\theta_t[i]$. This should not cause any confusion as the context of exact/uncertain forecast will be made clear.}

\section{Background}
\subsection{Centralized Predictive Control}
Centralized Predictive Control (PC) is an online algorithm that can be used to solve the finite horizon problem in \eqref{eq:nc_prob}. We first describe PC in the exact forecast case (each agent $i$ knows exactly  $f_{t+\tau}[i], c_{t+1+\tau}[i]$, $w_{t+\tau}[i]$ for the future $k$ steps $\tau = 0,\ldots, k-1$). In particular, we have that at time $t$, the controller observes $k$ (the prediction horizon) information tuples $I_{t:t+k-1}$ where each information tuple is defined as $I_t := (A,B,w_t,f_t,c_{t+1})$ and solves $\pc_t^k(x_t,w_{t:t+k-1};F)$, which is a finite-horizon optimal control problem defined as: {\small
\begin{align}\label{eq:cent_mpc}
    &\pc_t^{k}(x,\zeta; F) := \\
    \argmin_{y_{0:k},v_{0:k-1}} &\sum_{\tau = 0}^{k-1}f_{t+\tau}(y_\tau) + \sum_{\tau=1}^{k}c_{t+\tau}(v_{\tau-1}) + F(y_k), \nonumber\\
    \mathrm{s.t.} \: \: &y_{\tau + 1} = Ay_\tau + Bv_\tau + \zeta_\tau, \quad \forall \tau \in [k-1], \nonumber \\
    & y_0 = x \nonumber , 
\end{align} }where $\zeta \in (\mathbb{R}^{n})^{k}$ is a sequence of $k$ disturbances indexed from $0$ to $k-1$ and $F:\mathbb{R}^{n} \mapsto \mathbb{R}$ is a terminal cost regularizing the final predictive state. At each predictive time step $\tau$, the predictive state and control are $y_\tau \in \mathbb{R}^{n}$ and $v_\tau \in \mathbb{R}^{m}$ respectively. Abusing notation, we will often write $\pc_t^p(x,\zeta)$ for when the terminal cost is either $F(\cdot)$ or $f_T$ and $\pc_t^p(x;\cdot) = \pc_t^p(x,\zeta;\cdot)$ when the disturbances are unambiguous. The algorithm is described in Algorithm \ref{alg:cpc}.
\begin{algorithm}\caption{Centralized Predictive Control ($PC_k$) \cite{rawlings2017model}}\label{alg:cpc}
    \begin{algorithmic}
        \For{$t = 0,1,...,T-k-1$}
            \State Observe $x_t$ and information tuples $I_{t:t+k-1}$. 
            \State Solve $\pc_t^k(x_t,w_{t:t+k-1};F)$ and apply $u_t = v_0$.
        \EndFor
        \State At $t = T-k$, observe $x_{t}$ and information tuple $I_{t:T-1}$.
        \State Solve $\pc_{t}^{T-t}(x_{t},w_{t:T-1};f_T)$ and apply $u_{t:T-1} = v_{0:k-1}$.
    \end{algorithmic}
\end{algorithm}

In the uncertain forecast case, we assume that at time $t$, the controller has access to estimates $\theta_{t:t+k-1|t}$ of the true parameters $\theta^*_{t:t+k-1}$. Note that the predictions $\theta_{t:t+k-1|t}$ can come from any causal estimator so long as its estimates are ``good enough" (c.f. equations \eqref{eq:predsreq1} and \eqref{eq:predsreq2}).
The controller then uses the estimator's predictions $\theta_{t:t+k-1}$ to form information tuples $\hat{I}_{t:t+k-1|t}$ in which each information tuple is defined as $\hat{I}_{t+\tau|t} := (A,B,\hat{w}_{t+\tau|t}, \hat{f}_{t+\tau|t}, \hat{c}_{t+1+\tau|t})$, where $\hat{w}_{t+\tau|t} := w_{t+\tau}(\theta_{t+\tau|t})$ and similarly for the costs $\hat{f}_{t+\tau|t} := f_{t+\tau}(\cdot,\theta_{t+\tau|t})$ and $\hat{c}$. Hence, the uncertainty-aware version of Algorithm \ref{alg:cpc}, denoted as $uPC_k$, has the controller solve the optimization problem in \eqref{eq:cent_mpc} at every timestep $t$ with the costs and disturbances replaced by those in $\hat{I}_{t:t+k-1|t}$.

\subsection{Prior Results}
Before we discuss previous results, we must first define a performance metric. Thus, define the $\cost(\cdot)$ operator which takes as input a control algorithm and returns its cost defined by the objective in \eqref{eq:nc_prob}---that is, the cost of the trajectory $(x_{0:T},u_{0:T-1})$ generated by the control algorithm. Next, we define the optimal control algorithm, $OPT$, which generates the trajectory $(x_{0:T}^*,u_{0:T-1}^*)$ by solving the entire nonlinear program in \eqref{eq:nc_prob_unc}. Thus, we can define the dynamic regret of some control algorithm $ALG$ as $\cost(ALG) - \cost(OPT)$. 

In \cite{Lin2021perturb}, they establish ISS and regret bounds for $PC_k$. These bounds depend on the \textit{temporal decay constant} $\delta_{\mathcal{T}}$ (established in their Lipschitz result (Theorem 3.3 in \cite{Lin2021perturb})): with large enough prediction horizon $k$, the dynamic regret will decay exponentially as $O(\delta_{\mathcal{T}}^k)$. Since then, these results have been extended to the inexact forecast setting in \cite{Lin2022bounded} where they provide a so-called "pipeline" theorem (Theorem 3.3 in \cite{Lin2022bounded}) which determines sufficient conditions for bounding the regret of $uPC_k$. Using the pipeline theorem and several restrictive assumptions, the authors in \cite{Lin2022bounded} produce regret bounds for 3 general settings: LTV systems with uncertain disturbances, LTV systems with uncertain dynamics and uncertain quadratic costs, and nonlinear time-varying systems with uncertain dynamics, constraints, and costs. For the uncertainty setting described in \eqref{eq:nc_prob_unc} in combination with several assumptions, the regret bound according to Theorem 4.2 in \cite{Lin2022bounded} is $
\cost(uPC_k) - \cost(OPT) \leq O\left(\sqrt{T\sum_{n=0}^{k-1}\delta_\calT^n \Phi_n + \delta_\calT^{2k}T^2}+\sum_{n=0}^{k-1}\delta_\calT^n \Phi_n \right)$, where $\cost(OPT)$ is the cost of the offline optimal trajectory, $\pc_{0}^{T}(x_0,w_{0:T-1};f_T)$, and 
\begin{equation}\label{eq:sqerr}\Phi_n := \sum_{t=0}^{T-n}\lVert \theta_{t+n|t}-\theta^*_{t+n}\rVert^2\end{equation} is the \textit{cumulative squared error} of the predictions that are $n$ steps ahead. 

Despite its strong guarantees, it is not immediately clear how to perform $PC_k$ or $uPC_k$ in the networked setting because every node would require access to the global state and information tuples at every time step $t$.  More precisely, each node $i$ would require knowledge of true costs $f_t[j]$ and $c_t[j]$, true disturbances $w_t[j]$ (or the parameter forecasts $\theta_{t+\tau|t}[j]$ in the uncertain forecast case) across all $j$, as well as the full dynamics matrices, $A[j,j']$ and $B[j,j']$ for all $j,j'$, to perform $PC_k$ ($uPC_k$). This requires global information and is hence not practical. Therefore, it would be ideal to have a variant of $PC_k$ ($uPC_k$) for each node that only uses limited information from a local neighborhood. It is nontrivial, however, to design such an algorithm. First of all, a node cannot make predictions of the current and future state of the system with only local information. Secondly, the regret analyses in \cite{Lin2021perturb} and \cite{Lin2022bounded} is facilitated by the centralized setting since it allows for direct comparison between the trajectory generated by $PC_k$ to the optimal offline trajectory. This direct comparison cannot be made in the distributed setting because nodes incur extra error due to their lack of access to the centralized information tuples $I_{t:t+k-1}$.  %\guannan{the last paragraph could be strengthened - be more precise what the setting we are trying to address, highlight it is non-trivial to generalize existing resutls to our setting due to challenges 1,2,3. You can reuse some sentences in intro. }

\section{Algorithm Design}

The main idea behind the algorithm we will design is that at each time step, every node solves a ``$\kappa$-hop" version of \eqref{eq:cent_mpc} such that they only have access to state information and future information tuples of other nodes that are within a $\kappa$-hop distance of themselves. We first describe the approach in the exact forecast case before generalizing it to the uncertain forecast case.

\subsection{Exact Forecast}
Before explicitly describing the algorithm, we formalize the constraint of local information in the following definition:
\begin{definition}\label{def:ikmat}
    For a matrix $M = (M[j,k])_{j,k\in\mathcal{V}}$ we can define its $(i,\kappa)$-truncation $M^{(i,\kappa)}$ such that
    \begin{equation}\label{eq:iktrunc}M^{(i,\kappa)}[j,k] =\begin{cases}
            M[j,k], \quad j \in \nbhd^\kappa[i], \\
            0, \quad \mathrm{else},
            \end{cases}\end{equation}
\end{definition}
 
Note that this definition can easily be extended to vectors. 
Thus, we can define the distributed counterpart of \eqref{eq:cent_mpc} as: 
{\small
\begin{align}\label{eq:dist_mpc}
    &\dpc_t^k(x,\zeta,\nbhd^\kappa[i];F) := \\
    \argmin_{y_{0:k}, v_{0:k-1}} &\sum_{\tau=0}^{k-1}f_{t+\tau}(y_\tau) + \sum_{\tau=1}^{k}c_{t+\tau}(v_{\tau-1}) + F(y_k), \nonumber \\
    \mathrm{s.t.} \: \: &y_{\tau+1} = A^{(i,\kappa)} y_\tau + B^{(i,\kappa)} v_\tau + \zeta^{(i,\kappa)}_\tau, \quad \tau \in [k-1], \nonumber \\
    &y_0 = x^{(i,\kappa)}. \nonumber 
\end{align}}
Although we write $\dpc_t^k$ as a function of $x$ and $\zeta$, agent $i$ only uses the $(i,\kappa)$-truncated versions of $A$, $B$, $\zeta$, and $x$ to solve \eqref{eq:dist_mpc}. Further, for the costs $f_{t+\tau}(y_\tau) = \sum_{j}f_{t+\tau}[j](y_\tau[j])$, only the costs in the $\kappa$-hop neighbourhood $\knbhd[i]$ are needed since $y_\tau[j] = 0$ for $j$ outside of it. Similarly, $c_{t+\tau}(y_\tau) = \sum_{j}c_{t+\tau}[j](v_{\tau-1}[j])$ only requires costs within the $\kappa$-hop neighbourhood $\nbhd^{\kappa+1}[i]$ since $B^{(i,\kappa)}[j,k]$ is nonzero for $j \in \knbhd[i]$ and $k \in \nbhd[j]$. In other words, to solve (6), agent $i$ only needs to observe $x^{(i,\kappa)}$ and have access to the $k$ localized info tuples: $I_{t:t+k-1}^{(i,\kappa)}$ in which each localized info tuple is $I_{t}^{(i,\kappa)} := (A^{(i,\kappa)}, B^{(i,\kappa)}, w_t^{(i,\kappa)}, f_t^{(i,\kappa)}, c_{t+1}^{(i,\kappa+1)})$ where $f_t^{(i,\kappa)}[j] = f_t[j]$ for $j \in \knbhd[i]$ and $0$ for $j$ outside of $\knbhd[i]$ and $c_{t+1}^{(i,\kappa+1)}$ is defined similarly. 
Based on the definition of truncated optimal control problem \eqref{eq:dist_mpc}, the proposed algorithm can be described as in Algorithm \ref{alg:dtpc}:
\begin{algorithm}\caption{Distributed-Truncated PC ($DTPC_k$)}\label{alg:dtpc}
    \begin{algorithmic}
        \For{$t = 0,1,...,T-1$}
            \For{$i = 1,...,N$}
           \State Agent $i$ observes $x_t^{(i,\kappa)}$ and info tuples $I_{t:t+k-1}^{(i,\kappa)}$ 
		\If{$t < T-k$}
                	\State Solve $\dpc_t^k(x_t, w_{t:t+k-1}, \nbhd^\kappa[i]; F)$ 
		\Else
			\State Solve  $\dpc_t^{T-t}(x_t, w_{t:t+k-1}, \nbhd^\kappa[i]; f_T)$ 
		\EndIf
                \State Collect $v_0[i]$ from the solution and set $u_t[i]=v_0[i]$ 
            \EndFor
            \State Apply $u_t$ to the system. 
        \EndFor
    \end{algorithmic}
\end{algorithm}

$DTPC_k$ answers the first question in the LICPC problem by representing the information constraint via $(i,\kappa)$-truncated versions of the information tuples and state; each agent solves the optimization problem in \eqref{eq:dist_mpc} in which they only have access to $(i,\kappa)$-truncated versions of $A$, $B$, $\zeta$, and $x$. $DTPC_k$ also solves the second question asked by the LICPC problem, as we will show later that node $i$'s predicted states in $DTPC_k$ are accurate despite only observing the current states in $x_t^{(i,\kappa)}$. The underlying reason is that the error incurred from truncating the information at each node is negligible for large enough $\kappa$ due to a spatial decay property (cf. Theorem~\ref{thm:diffdec}). 

\subsection{Extending to Uncertain Forecast}
In the uncertain forecast version of $DTPC_k$, $uDTPC_k$, instead of having access to the true localized info tuples $I_{t:t+k-1}^{(i,\kappa)}$ at each time step $t$, each agent obtains localized predictions $\theta_{t:t+k-1|t}^{(i,\kappa)}$ from the estimator and then uses these estimates to form the predicted localized info tuples $\hat{I}_{t:t+k-1}^{(i,\kappa)}$ (defined in the same way as $\hat{I}_{t:t+k-1}$ but with $(i,\kappa)$-truncated versions of the costs and disturbances). Thus, $uDTPC_k$ proceeds exactly the same as $DTPC_k$ as described in Algorithm \ref{alg:dtpc}, but with the costs and disturbances of the optimization problem $\dpc_t^k$ replaced with those in the predicted localized info tuples $\hat{I}_{t:t+k-1}^{(i,\kappa)}$. 

\begin{algorithm}\caption{Distributed-Truncated PC ($uDTPC_k$)}\label{alg:udtpc}
    \begin{algorithmic}
        \For{$t = 0,1,...,T-1$}
            \For{$i = 1,...,N$}
           \State Agent $i$ observes $x_t^{(i,\kappa)}$ and info tuples $\hat{I}_{t:t+k-1}^{(i,\kappa)}$ 
		\If{$t < T-k$}
                	\State Solve $\dpc_t^k(x_t, \hat{w}_{t:t+k-1}, \nbhd^\kappa[i]; F)$ 
		\Else
			\State Solve  $\dpc_t^{T-t}(x_t, \hat{w}_{t:t+k-1}, \nbhd^\kappa[i]; \hat{f}_T)$ 
		\EndIf
                \State Collect $v_0[i]$ from the solution and set $u_t[i]=v_0[i]$ 
            \EndFor
            \State Apply $u_t$ to the system. 
        \EndFor
    \end{algorithmic}
\end{algorithm}

\section{Main Results}\label{sec:results}
In this section, we present ISS and regret guarantees for $DTPC_k$ and $uDTPC_k$.

\subsection{Exact Forecast}
Before presenting our main result, we state our assumptions.
\begin{assumption}\label{assump:sys}
    The system matrices in \eqref{eq:LTIcent} satisfy the following:
    \begin{enumerate}
        \item $\lVert A \rVert \leq L$, $\lVert B \rVert \leq L$, and $\lVert B^\dagger \rVert \leq L$.
        \item The reduced controllability matrix has minimum singular value $\minsv(\mathcal{C}^d) \geq \sigma$ for $\sigma >0$.
        \item There exists $\kappa_0 < \mathrm{diam}(\mathcal{G})$ (diameter of $\mathcal{G}$). such that for all $i \in \mathcal{V}$ and $\kappa \geq \kappa_0$, the system $(A^{(i,\kappa)},B^{(i,\kappa)})$ is $(L,\gamma)$-stabilizable. 
    \end{enumerate}
\end{assumption}
 Note that the second assumption guarantees existence of an $(L,\gamma)$-stable $K$ in the LTI setting. 
The final assumption essentially says that if we isolate large enough $\kappa$-hop neighbourhoods of nodes from a distributed system, then we expect that these isolated $\kappa$-hop subsystems to be stabilizable if the original distributed system was stabilizable.

We also make the following assumption on the costs:
\begin{assumption}\label{assump:costs}
    The costs are well-conditioned such that:
    \begin{enumerate}
        \item $f_t(\cdot)$ and $c_t(\cdot)$ are $\mu$-strongly convex, $L$-smooth, and twice continuously differentiable for all $t$.
	\item  $F(\cdot)$ is a $\mu$-strongly convex and $L$-smooth K-function (i.e., $F(x) = \beta(\lVert x\rVert)$ where $\beta(\cdot)$ is strictly increasing and $\beta(0) = 0$) 
    and twice continuously differentiable
    \item $f_t(\cdot)$ and $c_t(\cdot)$ are non-negative and $f_t(0) = c_t(0) = 0$ for all $t = 1,...,T$. The same goes for $f_0(\cdot)$.
    \end{enumerate}
\end{assumption}
Note that Assumption \ref{assump:costs} is effectively identical to Assumption 2.1 in \cite{Lin2021perturb}.

Finally, we make one last assumption on the $\tau$-hop neighbourhoods of the graph.
\begin{assumption}\label{assump:d_bands}
    There exists a subexponential function $p(\cdot)$ such that for some distance $d$, we have that 
    \begin{equation}\label{eq:subexp}\lvert j \in \mathcal{V}: \dist(i,j) = d\rvert \leq p(d).\end{equation}
Here, a subexponential function $p$ means $\lim_{d\to\infty} (\log p(d))/d = 0$.
\end{assumption}

Our main contribution is the following ISS and regret bound as stated in the following theorem.
\begin{theorem}\label{thm:perf_guarantees}
    Let the disturbance $w_t$ be uniformly bounded such that $\max_{t \in [T-k]}\sum_{\tau=0}^{k-1}\lVert w_{t+\tau}\rVert \leq D_k$, and let $C := \max(\Omega,\Gamma)$ and $\delta := \max(\delta_{\mathcal{S}}, \delta_{\mathcal{T}})$ where $\Omega$ and $\delta_{\mathcal{T}}$ are as in Lemma \ref{lem:termlpz} and $\Gamma$ and $\delta_{\mathcal{S}}$ are as in Theorem \ref{thm:diffdec}. Under Assumptions \ref{assump:sys}, \ref{assump:costs}, and \ref{assump:d_bands} with constants $\xi = 1-\sqrt{\delta} > 0$ and $L$ from the first two Assumptions, we take $k$ and $\kappa$ such that 
    $$\kappa \geq \max\left(\kappa_0,\frac{\log\frac{(1-\sqrt{\delta})(1-\delta)}{2C^2LN}}{\log\delta}\right), \quad k \geq \frac{2\log\frac{\delta^{5/2}(1-\delta)}{4C^3}}{\log\delta},$$
    which gives the following ISS bound for the system:
    \begin{equation}\label{eq:distISS}
        \lVert x_t\rVert \leq \begin{cases}
            \frac{C}{\xi}(1-\xi)^{\max(0,t-k)}\lVert x_0\rVert + \frac{W}{\xi}, \quad t \leq T-k, \\
            \frac{C^2}{\xi^2}(1-\xi)^{T-2k}\delta^{t+k-T}\lVert x_0\rVert + \frac{2CW}{\xi^2}, \quad \mathrm{else},
        \end{cases}
    \end{equation}
    where $W := \frac{10C^3LN}{(1-\delta)^2}D_k$. Further, we have the regret bound: {\small
    \begin{align}\label{eq:distREG}
        \cost &(DTPC_k) - \cost(OPT) = \nonumber\\
        &O\bigg(\bigg[\left(D_k + \frac{\lVert x_0\rVert + D_k}{\xi^2}\right)^2\delta^{\kappa} \nonumber \\ 
        &\quad + \left(D_k + \frac{\delta^k(\lVert x_0\rVert + D_k)}{\xi}\right)^2\delta^{k}\bigg]T + \eta\lVert x_0\rVert^2\bigg)  
    \end{align} }
    where $\eta = \Theta(\max(\delta^k,\delta^\kappa))$.
\end{theorem}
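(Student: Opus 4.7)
The strategy is a two-benchmark decomposition: compare $DTPC_k$ against the centralized finite-horizon controller $PC_k$ (same horizon $k$ but no spatial truncation), and then $PC_k$ against the offline optimum $OPT$. The triangle inequality gives
\[ \cost(DTPC_k)-\cost(OPT) \leq |\cost(DTPC_k)-\cost(PC_k)| + |\cost(PC_k)-\cost(OPT)|, \]
where the first gap is controlled by the spatial decay Theorem~\ref{thm:diffdec} (producing the $\delta^{\kappa}$ term) and the second by the temporal decay Lemma~\ref{lem:termlpz} (producing the $\delta^{k}$ term).

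First, I would establish the per-timestep control discrepancy between $u_t^{DTPC}$ and $u_t^{PC}$. Agent $i$'s action in $DTPC_k$ solves the $\kappa$-truncated problem \eqref{eq:dist_mpc}, while $u_t^{PC}[i]$ is the corresponding coordinate of \eqref{eq:cent_mpc}; Theorem~\ref{thm:diffdec} should then bound $\|u_t^{DTPC}[i]-u_t^{PC}[i]\|$ by $\Gamma\delta_{\mathcal{S}}^{\kappa}$ times a weighted combination of $\|x_t\|$ and the local disturbance norms. Summing across $i$ using Assumption~\ref{assump:d_bands} (subexponential neighbourhood growth is absorbed into $\delta^{\kappa}$ since $\kappa$ is taken large enough relative to $N$) yields the aggregate per-step action gap. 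Analogously, Lemma~\ref{lem:termlpz} yields a per-step gap of order $\delta_{\mathcal{T}}^{k}$ between $PC_k$ and $OPT$, with state/disturbance-dependent coefficients.

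Second, I would establish the ISS bound \eqref{eq:distISS}. Writing $x_{t+1}=Ax_t+Bu_t^{PC}+w_t+B(u_t^{DTPC}-u_t^{PC})$, the closed-loop $DTPC_k$ dynamics is a perturbation of the stable $PC_k$ dynamics by an $O(\delta^{\kappa})$-scaled forcing that itself depends linearly on $\|x_t\|$ and the future disturbance envelope $D_k$. The nominal $PC_k$ trajectory is contractive by combining Lemma~\ref{lem:termlpz} with the $(L,\gamma)$-stabilizability in Assumption~\ref{assump:sys}. A discrete Gronwall/induction argument, together with the lower bound on $\kappa$ which ensures the truncation-to-state gain $2C^2 L N \delta^{\kappa}/(1-\delta)$ is strictly below $1-\sqrt{\delta}$, gives contraction at rate $1-\xi$ with $\xi=1-\sqrt{\delta}$; the square-root splitting of $\delta$ allocates one $\sqrt{\delta}$ to contraction and the other to absorb cross-terms. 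The piecewise structure for $t>T-k$ is handled by a separate boundary estimate: on the tail, the predictive problem has shrinking horizon $T-t$ and true terminal cost $f_T$, so the temporal-sensitivity factor $\delta^{t+k-T}$ replaces the ongoing geometric contraction.

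Third, for the regret bound \eqref{eq:distREG}, I would invoke Assumption~\ref{assump:costs} (strong convexity, $L$-smoothness) to convert per-step state/action deviations into per-step cost deviations quadratically. Substituting the ISS bound of Step~2 into the Step~1 deviations and summing over $t$ produces the two $T$-scaling terms: the first with squared bracket times $\delta^{\kappa}$ from the spatial gap, the second with squared bracket times $\delta^k$ from the temporal gap. The residual $\eta\|x_0\|^2$ with $\eta=\Theta(\max(\delta^k,\delta^{\kappa}))$ captures the transient contribution before the state enters the ISS neighbourhood; it arises from the geometric-sum telescoping of the pre-contraction regime $t\leq k$.

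\textbf{Main obstacle.} The crux is Step~2: the truncation error per step depends on $\|x_t\|$, which depends on past actions that themselves suffered truncation error, creating a circular dependence that must be broken. Closing this loop demands that the effective gain of truncation error against state norm be strictly contractive, and this is exactly what forces the explicit $\kappa$ and $k$ lower bounds and the non-trivial choice $\xi=1-\sqrt{\delta}$. A secondary subtlety is the tail regime $t>T-k$, where the changing terminal cost from $F$ to $f_T$ and the shrinking horizon require a separate stability estimate matched continuously with the bulk estimate at $t=T-k$. Carefully bookkeeping the constants $C=\max(\Omega,\Gamma)$, $L$, $N$, and the subexponential neighbourhood factor through these coupled perturbation arguments is the principal technical burden.
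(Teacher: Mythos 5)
Your overall architecture (spatial decay controls the $\delta^{\kappa}$ terms, temporal decay controls the $\delta^{k}$ terms, ISS via a perturbation-plus-contraction induction with the $\kappa,k$ lower bounds breaking the circular state-dependence) matches the paper's roadmap, and you correctly identify the central difficulty in the ISS step. However, there are two genuine gaps in the regret portion. First, your plan to ``convert per-step state/action deviations into per-step cost deviations quadratically'' via smoothness does not close: $f_t(x_t)-f_t(x_t^*)$ carries a first-order cross term $\langle \nabla f_t(x_t^*), x_t-x_t^*\rangle$ that is linear, not quadratic, in the deviation, and handling it is exactly why the paper invokes the relative-cost inequality $h(x)-(1+\eta)h(x') \le \tfrac{L}{2}(1+\tfrac{1}{\eta})\lVert x-x'\rVert^2$ (Lemma~\ref{lem:etacost}), bounds $\cost(DTPC_k)-(1+\eta')\cost(OPT)$, and then adds back $\eta'\cost(OPT)$. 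This also explains the residual $\eta\lVert x_0\rVert^2$ term, which comes from $\eta'\cost(OPT)$ and not, as you claim, from a pre-contraction transient; the choice $\eta=\Theta(\max(\delta^k,\delta^\kappa))$ is precisely what balances the $\tfrac{1}{\eta}(\delta^{2\kappa}+\delta^{2k})T$ term against $\eta\,\cost(OPT)$ to yield the first powers $\delta^{\kappa},\delta^{k}$ in \eqref{eq:distREG}. Second, you never say how to compare the control costs $c_{t+1}(u_t)$ and $c_{t+1}(u_t^*)$: the ISS and trajectory bounds control states, not inputs, and the paper needs the one-step terminal-state problem $\pct_t^1(x_t,x_{t+1})_{v_0}$ together with Lemma~\ref{lem:onestepterm} to convert input discrepancies into state discrepancies. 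Without an analogue of that step your per-step action gap is not actually bounded by the quantities you have.

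A smaller but still substantive point: the paper does not use your cost-level triangle inequality through $\cost(PC_k)$. It compares trajectories directly, bounding $\lVert x_{t+1}-x_{t+1}^*\rVert$ (Theorem~\ref{thm:statediffbnd}) by telescoping predictive solutions across time via the principle of optimality, $\pc_{t-m-1}^{k}(x_{t-m-1})_{y_{m+2}} = \pc_{t-m}^{k-1}(x_{t-m}^c)_{y_{m+1}}$, and applying the Lipschitz bounds to each increment. Your ISS sketch similarly asserts that ``the nominal $PC_k$ trajectory is contractive'' as a one-step map; it is not in general, and the recursion actually obtained (Theorem~\ref{thm:statenorm}) couples $\lVert x_{t+1}\rVert$ to the last $k$ states with coefficients $\Lambda_m = L_N\delta^{\kappa+m}+2C^2\delta^{2k-m-3}$, after which the induction with $1-\xi=\sqrt{\delta}$ closes because $C\sum_m \Lambda_m(1-\xi)^{-m}\le 1$ under the stated lower bounds on $k$ and $\kappa$. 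Your Gronwall-style framing would need to be replaced by this multi-step telescoping to be made rigorous.
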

Theorem \ref{thm:perf_guarantees} says that the regret is exponentially decaying in $\kappa$ and $k$, meaning that if we pick $\kappa$ and $k$ equal to $\Theta(\log T)$, we can attain $o(1)$ dynamic regret. This result parallels that of \cite{Lin2021perturb} with differences. One difference is that in contrast to a single decay constant in \cite{Lin2021perturb}, our bound involves two decay constants $\delta_{\mathcal{S}}$ and $\delta_{\mathcal{T}}$, which correspond to the spatial and temporal decay constants respectively. The spatial decay constant $\delta_{\mathcal{S}}$ depends on the graph and networked structure of the dynamics in \eqref{eq:LTInode}. The temporal decay constant $\delta_{\mathcal{T}}$ is attributed to the Lipschitz property of \eqref{eq:cent_mpc} and its costs' conditioning.
Aside from differences in the constants, the ISS result attains an extra factor of $\xi$ in the last $k$ time steps due to having to explicitly solve \eqref{eq:dist_mpc} at every time step. Furthermore, the regret bound incurs extra error (the $\kappa$ term) from the truncation as expected. 

\subsection{Uncertain Forecast Case}
We require the following assumptions for our main result on $uDTPC_k$ regarding the parameterized class of cost functions and disturbances.

\begin{assumption}\label{assump:uncertainties}
    For the uncertain costs and disturbances, we assume that
    \begin{enumerate}
        \item $f_t(\cdot,\theta_t)$ and $c_t(\cdot,\theta_t)$ are $\mu$-strongly convex, $L$-smooth, twice-continuously differentiable and nonnegative with $f_t(0,\theta_t) = c_t(0,\theta_t) = 0$ for all $t$ and $\theta_t$.
        \item The disturbance $w_t(\theta_t)$ is bounded for all $t$ and $\theta_t$ such that $\lVert w_t(\theta_t)\rVert \leq D$ and lipschitz continuous with constant $L$.
        \item The Hessians $\nabla_{xx}^2f_t$ and $\nabla_{uu}^2c_t$ are lipschitz continuous with respect to the parameter $\theta_t$ for all $t$ with constant $L$.
    \end{enumerate}
\end{assumption}

Under the same setting as in the certainty case and Assumption \ref{assump:uncertainties} we have our main result for the uncertainty setting:

\begin{theorem}\label{thm:uncertaintyresult}
    Let $C := \max(\Omega, \Gamma, \Upsilon)$, where $\Upsilon$ is defined in Theorem \ref{thm:predpertbnd}, and let $\delta$, $\xi$, and $D_k$ be defined as in Theorem \ref{thm:perf_guarantees}. Under Assumptions \ref{assump:sys}, \ref{assump:costs}, \ref{assump:d_bands}, and \ref{assump:uncertainties}, choose $k$ and $\kappa$ such that 
    $$\kappa \geq \max\left(\kappa_0,\frac{\log\frac{\sqrt{\delta}(1-\sqrt{\delta})}{3C^2N}}{\log\delta}\right), \quad k \geq \frac{2\log\frac{\delta(1-\delta)}{6C^3}}{\log\delta}.$$
    Suppose that the predictions satisfy    \begin{align}\label{eq:predsreq1}
        &\sum_{m=0}^{k-1}\sum_{n=0}^{k}\phi_{t-m+n|t-m}\delta^{2n+\frac{m}{2}}\leq \frac{\sqrt{\delta}}{3LC^2}
    \end{align}
    and
    \begin{align}\label{eq:predsreq2}
        \phi_{t+n|t} \leq R(\delta+\epsilon)^{-n}
    \end{align}
    where $\phi_{t+n|t} := \lVert \theta_{t+n|t} - \theta^*_{t+n}\rVert$ is the prediction error, and we are given constants $\epsilon > 0$ and $R > 0$.
    Then $uDTPC_k$ obtains the following ISS bound
\begin{equation}\label{eq:uncertISS}
        \lVert x_t\rVert \leq \begin{cases}
            \frac{C}{\xi}(1-\xi)^{\max(0,t-k)}\lVert x_0\rVert + \frac{W}{\xi}, \quad t\leq T-k, \\
            \frac{C^2}{\xi^2}(1-\xi)^{T-2k}\delta^{t+k-T}\lVert x_0\rVert + \frac{2CW}{\xi^2}, \quad \mathrm{else},
        \end{cases}
    \end{equation}
    where $W = \frac{8LNC^2(\delta+\epsilon)R}{(1-\delta)^2\epsilon}D_k$. $uDTPC_k$ also enjoys the regret bound:
    \begin{align}\label{eq:uncert_reg_bnd}&\cost(uDTPC_k) - \cost(OPT) = \nonumber \\ &O\Bigg(\sqrt{T\sum_{n=0}^{k}\delta_\calT^n\Phi_n + T^2(\delta_\calS^{2\kappa} + \delta_\calT^{2k})}    +\sum_{n=0}^{k}\delta_\calT^n\Phi_n \Bigg)
    \end{align}
    where we recall that $\Phi_n$ is defined as the cumulative square error of the predictions \eqref{eq:sqerr} $n$ steps ahead, and $\delta_\calS$ and $\delta_\calT$ are the spatial and temporal decay constants as defined in Theorem \ref{thm:perf_guarantees}. 
\end{theorem}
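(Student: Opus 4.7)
\textbf{Proof proposal for Theorem~\ref{thm:uncertaintyresult}.}
The plan is to reduce the analysis of $uDTPC_k$ to that of $DTPC_k$ (Theorem~\ref{thm:perf_guarantees}) by treating forecast errors as an additive perturbation on top of the spatial truncation error. The main tool for the reduction is the parameter-sensitivity bound of Theorem~\ref{thm:predpertbnd} (whose constant $\Upsilon$ is absorbed into $C$), which controls how perturbing the parameters $\theta$ inside the $\dpc$ problem \eqref{eq:dist_mpc} propagates to its first-action output. I would first combine that bound with the temporal decay of Lemma~\ref{lem:termlpz} to establish the intermediate estimate
\begin{equation*}
\lVert u_t^{uDTPC} - u_t^{DTPC} \rVert \lesssim \sum_{n=0}^{k-1} \delta_\calT^{\,n}\, \phi_{t+n|t},
\end{equation*}
where $u_t^{DTPC}$ is the action $DTPC_k$ would deploy at the same state $x_t$. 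The factor $\delta_\calT^{\,n}$ is the crucial exponential discount on later-horizon forecast errors and is the analogue, in the parameter perturbation direction, of the Lipschitz property used in \cite{Lin2021perturb}.

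For the ISS bound \eqref{eq:uncertISS}, I would plug this per-step perturbation into the closed-loop recursion and re-run the ISS argument behind Theorem~\ref{thm:perf_guarantees}, now with an extra forcing term $\sum_n \delta_\calT^n \phi_{t+n|t}$. Under the geometric-decay assumption \eqref{eq:predsreq2}, $\phi_{t+n|t}\leq R(\delta+\epsilon)^{-n}$, this forcing is bounded by a convergent geometric series whose sum produces the new noise constant $W = 8LNC^2(\delta+\epsilon)R/((1-\delta)^2\epsilon) \cdot D_k$ appearing in \eqref{eq:uncertISS}. The small-gain-type condition \eqref{eq:predsreq1} is used to guarantee that this perturbation is small enough that the contraction established in Theorem~\ref{thm:perf_guarantees} (via $\xi = 1-\sqrt{\delta} > 0$) still dominates, preserving the same piecewise exponential decay structure in the two regimes $t \leq T-k$ and $t > T-k$.

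For the regret bound \eqref{eq:uncert_reg_bnd}, I would follow the pipeline strategy of Theorem~3.3 in \cite{Lin2022bounded}, suitably adapted to the distributed truncation setting. Decompose
\begin{equation*}
\cost(uDTPC_k) - \cost(OPT) = \bigl[\cost(uDTPC_k) - \cost(DTPC_k)\bigr] + \bigl[\cost(DTPC_k) - \cost(OPT)\bigr].
\end{equation*}
The second bracket is the exact-forecast regret, which is $O\bigl(T(\delta_\calS^{2\kappa} + \delta_\calT^{2k})\bigr)$ by Theorem~\ref{thm:perf_guarantees}. For the first bracket, using the $L$-smoothness of the stage costs together with the perturbation estimate above gives a per-step excess cost bounded by $\bigl(\sum_{n=0}^{k-1}\delta_\calT^n \phi_{t+n|t}\bigr)^2$ plus a quadratic-in-state discrepancy term that can itself be bounded via the ISS result just proved. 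Summing over $t$, exchanging the order of summation, and applying Cauchy--Schwarz converts $\sum_t \bigl(\sum_n \delta_\calT^n \phi_{t+n|t}\bigr)^2$ into $\sum_{n=0}^{k}\delta_\calT^n\Phi_n$, while the cross term between the discrepancy and the nominal trajectory yields the square-root term $\sqrt{T\sum_n \delta_\calT^n \Phi_n + T^2(\delta_\calS^{2\kappa}+\delta_\calT^{2k})}$.

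The main obstacle is that the spatial truncation error (controlled by Theorem~\ref{thm:diffdec}) and the forecast/parameter error (controlled by Theorem~\ref{thm:predpertbnd}) both enter the same closed-loop recursion, and naively stacking the two bounds would double-count perturbations and break the contraction. I expect to need a unified sensitivity lemma for the map $(x,\theta)\mapsto$ first action of $\dpc^k_t(\cdot,\cdot,\nbhd^\kappa[i];\cdot)$ that exhibits the spatial rate $\delta_\calS$ in the state/disturbance coordinates and the temporal rate $\delta_\calT$ in the parameter coordinates simultaneously. Verifying that the particular thresholds on $k$ and $\kappa$ stated in the theorem, together with the small-gain condition \eqref{eq:predsreq1}, are exactly what is needed to keep the combined contraction factor strictly below $1$ (so that both the ISS recursion and the Cauchy--Schwarz-based regret bookkeeping close) is the most delicate step.
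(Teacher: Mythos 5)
Your ISS argument matches the paper's: the paper likewise derives a parameter-sensitivity bound (Theorem~\ref{thm:predpertbnd} and Corollary~\ref{cor:uncnextstepbnd}), injects the resulting per-step perturbation $\sum_{n}\rho^{n}(1+\rho^{n}\lVert x_t\rVert)\phi_{t+n|t}$ into the three-regime state recursion of Theorem~\ref{thm:statenorm}, uses \eqref{eq:predsreq1} as the small-gain condition to preserve the contraction, and sums the geometric series from \eqref{eq:predsreq2} to produce the new $W$. That part of your proposal is essentially the paper's proof.

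For the regret, however, you take a genuinely different decomposition, and it has a gap. You split $\cost(uDTPC_k)-\cost(OPT)$ through the intermediate quantity $\cost(DTPC_k)$, which forces you to compare the costs of \emph{two distinct closed-loop trajectories}. A per-step action perturbation bound of the form $\lVert u_t^{uDTPC}-u_t^{DTPC}\rVert\lesssim\sum_n\delta_\calT^n\phi_{t+n|t}$ only holds when both actions are evaluated at the \emph{same} state; once the trajectories diverge, the excess cost at time $t$ depends on the accumulated state discrepancy $\lVert x_t^{uDTPC}-x_t^{DTPC}\rVert$, which is not controlled by the ISS bound (that bounds each state's norm, not their difference). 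You would need an additional propagation lemma in the spirit of Theorem~\ref{thm:statediffbnd} for this pair of trajectories, which you have not supplied. The paper sidesteps this entirely: it applies the pipeline lemma (Lemma~3.2 of \cite{Lin2022bounded}) \emph{once}, to the single control error $e_t=\lVert u_t-\pc_t^{T-t}(x_t,\theta^*_{t:T};f_T)_{v_0}\rVert$ measured at the actual state visited by $uDTPC_k$, and then bounds $e_t$ by a three-way triangle inequality separating the spatial truncation error (Theorem~\ref{thm:diffdec}), the parameter error (Theorem~\ref{thm:predpertbnd}), and the horizon-truncation error (Lemma~\ref{lem:termlpz} with Corollary~\ref{cor:lpsz}). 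This is also why the ``unified sensitivity lemma'' you anticipate needing is unnecessary: the three error sources are simply added at the level of $e_t$ before the pipeline lemma converts $\sum_t e_t^2$ into the regret bound, so no double-counting or combined contraction argument arises. To repair your route you would either have to prove the missing trajectory-discrepancy lemma or restructure the argument around a single reference action as the paper does.
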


Note that the ISS bound \eqref{eq:uncertISS} is nearly identical to \eqref{eq:distISS} with the only difference being in the disturbance term $W$. Under perfect predictions, the regret bound in \eqref{eq:uncert_reg_bnd} reduces $O((\delta^\kappa + \delta^k)T)$ which matches the regret bound in \eqref{eq:distREG} sans the constants absorbed by the big-$O$ notation ($D_k$, $\lVert x_0\rVert$, and $\xi$). As expected, the regret bound \eqref{eq:uncert_reg_bnd} picks up additional terms that are proportional to the cumulative squared error, $\Phi_n$ in \eqref{eq:sqerr}, due to the uncertainty in the estimation. Additionally, our regret bound in \eqref{eq:uncert_reg_bnd} only differs from the regret bound in Theorem 4.2 of \cite{Lin2022bounded} by an extra factor of $\delta^{2\kappa}T^2$ in the square root which can be made small with appropriate $\kappa$. Further note that the regret bound in \eqref{eq:uncert_reg_bnd} and the requirement on the predictions in \eqref{eq:predsreq1} indicate that predictions near the current time $t$ are exponentially more important than those far away from $t$ for the sake of performance.

To demonstrate the power of Theorem \ref{thm:uncertaintyresult}, let us consider two types of forecast errors: 
\begin{itemize}
    \item  We consider the forecast improves over time $t$ but deteriorate exponentially in the look-ahead horizon $n$, i.e. its forecast errors are
\begin{equation}\label{eq:sqrtt}
    \phi^{e}_{t+n|t} = O\left(\frac{1}{\sqrt{t}}(\delta+\eps)^{-n/2}\right)
\end{equation}
for a positive number $\eps$. Under this estimator, we can upper bound the forecast error term in \eqref{eq:uncert_reg_bnd} as:
$$\sum_{n=0}^{k}\delta^n\Phi_n^e \lesssim \sum_{n=0}^{k}\sum_{t=0}^{T-n}\frac{1}{t(1+\eps/\del)^n} = O(\log T).$$
Using the estimator above and choosing $k$ and $\kappa$ equal to $\Theta(\log T)$, $uDTPC_k$ enjoys regret $O(\sqrt{T\log T})$ matching the regret bound in \cite{ManiaCELQC2019}.

\item We consider the forecast errors are constantly bounded over time $t$, but with the same exponential deterioration in look-ahead horizon, i.e.
\begin{equation}\label{eq:constexpdeg}\phi^c_{t+n|t} \leq R(\nu)^{-n}.\end{equation}
If $\theta_t^*$ evolves as a linear dynamical system with gaussian noise, an example of such a predictor which achieves this bounded performance is the kalman filter. If $\nu \geq (\del+\eps)^{1/2}$, it is easy to see that the forecast error term in \eqref{eq:uncert_reg_bnd} is upper bounded as $O(R^2T)$, and so the regret of $uDTPC_k$ under this estimator, with $k$ and $\kappa$ equal to $\Theta(\log T)$, is $O(R^2T)$, meaning that the regret of the algorithm directly depends on the constant performance of the estimation. 

\end{itemize}
   
\section{Proof}
In this section we provide the proof for the exact forecast case (Theorem~\ref{thm:perf_guarantees}). The uncertain forecast case will be handled in Appendix-\ref{subsec:proof_uncertain}. We begin the proof of Theorem \ref{thm:perf_guarantees} with a roadmap.

\textbf{Step 1}: Prove exponential decay between the solution to the centralized problem in \eqref{eq:cent_mpc} versus the solution to \eqref{eq:dist_mpc}. The main idea is that we can express the KKT conditions of both \eqref{eq:cent_mpc} and \eqref{eq:dist_mpc} as the matrix equation $H(z)z = b$ for which we can directly apply the exponential decay of the inverse of $H$ from \cite{Shin2022sensitivity} and \cite{Sungho2023nearopt} to show exponential decay. 
    
\textbf{Step 2}: Show the ISS bound on $DTPC_k$. Here, we will utilize the exponential decay from Step 1 to establish a bound on the difference between the next state produced by $DTPC_k$ and the state that would have been produced by $PC_k$. Then, since $PC_k$ is already ISS, we expect $DTPC_k$ also to be ISS.

\textbf{Step 3}:  We finally prove the regret bound of $DTPC_k$. In order to prove the regret, we will use assumption \ref{assump:costs} and relate the difference between the trajectory from $DTPC_k$ and the offline optimal trajectory $\pc_0^T(x_0,w_{0:T-1};f_T)$ via the ISS bound.

%We note that due to space limit, the proofs for some auxiliary results are omitted, and its explicit proof can be found in the Appendix of \cite{githublink}.

\subsection{Proof of Step 1}
The main goal of Step 1 is to prove the following result: 
\begin{theorem}\label{thm:diffdec}
    Let $q^c$ be the solution vector containing the primal and dual variables for the centralized problem in \eqref{eq:cent_mpc} and let $q^d$ be that of the distributed problem in \eqref{eq:dist_mpc} for some $i \in \mathcal{V}$. Let the prediction horizon be $\ell \leq k$ and the truncation factor be $\kappa \geq \kappa_0$ as in Assumption \ref{assump:sys}. Under the Assumptions \ref{assump:sys}, \ref{assump:costs}, and \ref{assump:d_bands}, we have the following decay result: 
    \begin{equation}\label{eq:diffdec}
        \lVert q^c[i] -q^d[i]\rVert \leq \Gamma(\lVert x\rVert + D_{\ell})\delta_{\mathcal{S}}^\kappa
    \end{equation}
    where the closed brackets $[\cdot]$ will henceforth denote the spatial indexing of some vector or matrix by the network graph $\mathcal{G}$ underlying the dynamics, i.e. $q^c[i]$ means all the entries in $q^c$ corresponding to node $i$ (including all time steps); $D_{\ell} := \max_{t \in [T-1]}\sum_{\tau=0}^{\ell-1}\lVert w_{t+\tau}\rVert$. The constants $\Gamma$ and $\delta_{\mathcal{S}}$ are defined as 
    
    {\small
    $$\begin{aligned}
        \delta_{\mathcal{S}} := \frac{\rho+1}{2}, \quad \Gamma := \frac{4\alpha^2\delta_{\mathcal{S}} L}{(1-\delta_{\mathcal{S}})^2}\left(\sup_{d \in \mathbb{Z_+}}\left(\rho/\delta_{\mathcal{S}}\right)^dp(d)\right)^2p(1),
    \end{aligned}$$} where the function $p(\cdot)$ is from Assumption \ref{assump:d_bands} and constant $L$ is from Assumptions \ref{assump:sys} and \ref{assump:costs}. The constants $\rho$ and $\alpha$ are as in Theorem \ref{lem:spatdec}.
\end{theorem}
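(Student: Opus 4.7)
The plan is to write the KKT optimality conditions for both \eqref{eq:cent_mpc} and \eqref{eq:dist_mpc} as a single large linear system over the stacked primal-dual variables indexed by node and prediction time step. Denote these $H^c z^c = b^c$ for the centralized problem and $H^d z^d = b^d$ for the distributed problem solved at node $i$. Both $H$ matrices are block tri-diagonal in time and inherit the graph sparsity of $(A,B)$ in space. Crucially, $H^c$ and $H^d$ differ only in block rows/columns indexed by nodes outside $\knbhd[i]$: the truncations $A^{(i,\kappa)}, B^{(i,\kappa)}, w^{(i,\kappa)}, x^{(i,\kappa)}$, together with the induced cost truncation, zero out exactly those blocks. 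Hence $\Delta H := H^c - H^d$ and $\Delta b := b^c - b^d$ are supported entirely on block indices $j$ with $\dist(i,j) > \kappa$, with the coupling living on $\partial\knbhd[i]$.

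Next I would apply the standard perturbation identity
$$z^c - z^d \;=\; (H^c)^{-1}\!\bigl[\Delta b + \Delta H\, z^d\bigr],$$
and extract the block corresponding to node $i$. By Theorem~\ref{lem:spatdec} the inverse KKT operator satisfies the spatial-decay bound $\lVert [(H^c)^{-1}][i,j]\rVert \le \alpha\,\rho^{\dist(i,j)}$ with $\rho<1$. Since the right-hand side is supported on $\{j:\dist(i,j) > \kappa\}$, I would bound the block norm at $i$ by
$$\lVert q^c[i]-q^d[i]\rVert \;\le\; \sum_{d>\kappa}\alpha\rho^d\, p(d)\cdot M,$$
where $p(d)$ is the subexponential neighborhood growth from Assumption~\ref{assump:d_bands} and $M$ is a per-block upper bound on $\lVert \Delta b + \Delta H\, z^d\rVert$. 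Splitting $\rho^d = (\rho/\delta_{\mathcal{S}})^d \delta_{\mathcal{S}}^d$ with $\delta_{\mathcal{S}} = (\rho+1)/2 > \rho$ lets me absorb the $p(d)$ factor into the supremum $\sup_d (\rho/\delta_{\mathcal{S}})^d p(d)$ (finite by Assumption~\ref{assump:d_bands}) and sum the resulting geometric series, producing the leading $\delta_{\mathcal{S}}^\kappa$ factor and matching the stated constant $\Gamma$.

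The per-block magnitude $M$ I would bound as follows: entries of $\Delta H$ are blocks of $A,B$ (or identity) bounded by $L$ via Assumption~\ref{assump:sys}(1), and entries of $\Delta b$ are blocks of $w$, $x$, and cost gradients at zero, all bounded in terms of $\lVert x\rVert$ and $D_\ell$. The term $\Delta H\, z^d$ requires an a priori bound $\lVert z^d\rVert = O(\lVert x\rVert + D_\ell)$; I would obtain this from the $\mu$-strong convexity of the truncated PC objective (Assumption~\ref{assump:costs}) combined with stabilizability of $(A^{(i,\kappa)},B^{(i,\kappa)})$ (Assumption~\ref{assump:sys}(3)), which ensures a feasible trajectory whose cost is $O((\lVert x\rVert + D_\ell)^2)$ and hence controls the optimizer. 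Combining this with the decay sum gives exactly $\Gamma(\lVert x\rVert + D_\ell)\delta_{\mathcal{S}}^\kappa$.

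The main obstacle I anticipate is the bookkeeping that establishes $\mathrm{supp}(\Delta H), \mathrm{supp}(\Delta b) \subseteq \{j:\dist(i,j)>\kappa\}$ cleanly: one must simultaneously account for the truncation of $A$, $B$, $w$, $x$, the cost functions $f_t^{(i,\kappa)}$, $c_t^{(i,\kappa+1)}$, and the corresponding dual (costate) variables, verifying that no cross-boundary KKT block survives. The second delicate point is the a priori bound on $\lVert z^d\rVert$, because the truncated system's optimal cost must be controlled uniformly in $\kappa$; here the stabilizability condition in Assumption~\ref{assump:sys}(3) does the essential work by furnishing a bounded feasible control sequence whose cost is quadratic in $\lVert x\rVert + D_\ell$, independent of $\kappa \ge \kappa_0$.
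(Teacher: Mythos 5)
Your overall strategy is the paper's: write the KKT systems of \eqref{eq:cent_mpc} and \eqref{eq:dist_mpc} in matrix form, use the perturbation identity $q^c-q^d=(H^c)^{-1}\bigl(b^\perp - H^\perp q^d\bigr)$, invoke the spatial decay of the KKT inverse (Lemma~\ref{lem:spatdec}), observe that $b^\perp$ and $H^\perp$ are supported outside $\knbhd[i]$, and sum a geometric series after splitting $\rho^d=(\rho/\delta_{\mathcal{S}})^d\delta_{\mathcal{S}}^d$. However, there is one genuine gap in how you set up the linear systems. The costs are only assumed strongly convex and twice differentiable, not quadratic, so the KKT stationarity condition is nonlinear; to write it as $Hq=b$ you must use Lemma~\ref{lem:grad2mat}, and if you linearize each problem \emph{separately} (i.e.\ $\nabla\hat f(z^c)=G(z^c,0)z^c$ and $\nabla\hat f(z^d)=G(z^d,0)z^d$), then $\Delta H$ contains the block $G(z^c,0)-G(z^d,0)$, which is supported wherever $z^c\neq z^d$ --- in general everywhere, not just outside $\knbhd[i]$. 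Your key support claim then fails. The paper avoids this by first subtracting the two KKT systems and only then applying Lemma~\ref{lem:grad2mat} to the \emph{difference} $\nabla\hat f(z^c)-\nabla\hat f(z^d)=G(z^c,z^d)(z^c-z^d)$, so that both effective matrices in \eqref{eq:kktdiffs} carry the identical block $G(z^c,z^d)$, the Hessian blocks cancel in $H^\perp$, and $H^\perp$ reduces to the Jacobian truncation, which is genuinely localized. You flag the support bookkeeping as your main anticipated obstacle but do not identify this specific issue, and it is the one place where the naive version of your plan breaks.

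The second, smaller deviation is your treatment of $H^\perp q^d$. The paper substitutes $q^d=(\tilde H^d)^{-1}b^d$ and applies the spatial decay of $(\tilde H^d)^{-1}$ a second time, which is where the $\alpha^2$ and the squared $\sup_d(\rho/\delta_{\mathcal{S}})^dp(d)$ factors in $\Gamma$ come from. You instead propose an a priori bound $\lVert z^d\rVert=O(\lVert x\rVert+D_\ell)$ from feasibility plus strong convexity. This controls only the primal variables; the vector multiplied by $H^\perp$ is the full primal--dual vector, so you would additionally need to bound $\lambda^d$ (e.g.\ via the uniform lower bound $\mu_H$ on the singular values of $\tilde H^d$, which gives $\lVert q^d\rVert\le\mu_H^{-1}\lVert b^d\rVert$ directly and makes your feasibility argument unnecessary). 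Either repair yields a bound of the stated form $\Gamma'(\lVert x\rVert+D_\ell)\delta_{\mathcal{S}}^{\kappa}$, but with a different constant than the $\Gamma$ in the theorem statement, so your claim of ``matching the stated constant'' would not hold literally.
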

Before proving the result above, we first revisit the KKT conditions of the optimization problems in \eqref{eq:cent_mpc} and \eqref{eq:dist_mpc} and then we introduce two key auxiliary results to proving Theorem \ref{thm:diffdec}.

Let $t \in [T]$ be arbitrary and $\ell \leq k$ be the horizon variable. Then we define the total cost as 
\begin{equation}
    \label{eq:totcost}
    \hat{f}(z) := \sum_{\tau=0}^{\ell-1}f_{t+\tau}(y_\tau) + \sum_{\tau=1}^{\ell}c_{t+\tau}(v_{\tau-1}) + g(y_{\ell}),
\end{equation}
where $g(\cdot)$ is either $F(\cdot)$ or $f_T(\cdot)$ and $z := (y_0. v_0, ..., y_{\ell-1},v_{\ell-1},y_{\ell})$ is the trajectory. 

Let $J$ be the constraint jacobian 
\begin{equation}\label{eq:jacobian}J(A,B) := \begin{bmatrix}
    I \\ -A & -B & I \\ & & \ddots \\ & & -A & -B & I
\end{bmatrix},\end{equation}
which has $\ell + 1$ rows. Let, $\lambda$ be the dual variables. The KKT conditions of \eqref{eq:dist_mpc} are then 
\begin{equation}\label{eq:nonlinkkt}\begin{bmatrix}
    \nabla \hat{f}(z^d) + (J^d)^\top \lambda^d \\ J^dz^d 
\end{bmatrix} = \begin{bmatrix}
    0 \\ \begin{bmatrix}
        x^{(i,\kappa)} \\ \zeta^{(i,\kappa)} 
    \end{bmatrix}
\end{bmatrix},\end{equation}
where $J^d := J(A^{(i,\kappa)},B^{(i,\kappa)})$ and $(z^d, \lambda^d)$ is primal-dual solution of $\dpc_t^\ell$. Further, we have the following lemma:
\begin{lemma}
[Lemma 1 in \cite{QuPDGD2019}]\label{lem:grad2mat}
    For $\mu$-strongly convex, $L$-smooth, and twice continuously-differentiable $\hat{f}$, For each $z$ and $z'$ there exists symmetric $G(z,z')$ such that $\mu I \preceq G(z) \preceq LI$ and $\nabla \hat{f}(z) - \nabla \hat{f}(z') = G(z,z')(z-z')$.
\end{lemma}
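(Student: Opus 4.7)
\textbf{Proof plan for Lemma \ref{lem:grad2mat}.} The plan is to realize $G(z,z')$ as an averaged Hessian along the segment from $z'$ to $z$, and then verify the three required properties (symmetry, spectral bounds, and the mean-value identity) by inheriting them from the pointwise Hessian.

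First, I would define the candidate matrix explicitly by
\begin{equation*}
G(z,z') := \int_0^1 \nabla^2 \hat{f}\bigl(z' + s(z-z')\bigr)\, ds.
\end{equation*}
Since $\hat{f}$ is $C^2$, the integrand is continuous in $s$, so the integral exists as a matrix-valued Riemann integral.

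Next, I would establish the mean-value identity. Applying the fundamental theorem of calculus to the $C^1$ map $s \mapsto \nabla \hat{f}(z' + s(z-z'))$ on $[0,1]$ and using the chain rule gives
\begin{equation*}
\nabla \hat{f}(z) - \nabla \hat{f}(z') = \int_0^1 \frac{d}{ds}\nabla \hat{f}\bigl(z' + s(z-z')\bigr)\, ds = \left(\int_0^1 \nabla^2 \hat{f}\bigl(z' + s(z-z')\bigr)\, ds\right)(z-z'),
\end{equation*}
which is exactly $G(z,z')(z-z')$.

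For symmetry, I would invoke Clairaut/Schwarz: because $\hat{f}$ is twice continuously differentiable, each Hessian $\nabla^2 \hat{f}(\cdot)$ is a symmetric matrix, and symmetry is preserved under integration, so $G(z,z')^\top = G(z,z')$. For the spectral bounds, $\mu$-strong convexity and $L$-smoothness of $\hat{f}$ (together with $C^2$-regularity) give $\mu I \preceq \nabla^2 \hat{f}(y) \preceq L I$ for every $y$; integrating this pointwise inequality against the nonnegative measure $ds$ on $[0,1]$ preserves the Loewner order, yielding $\mu I \preceq G(z,z') \preceq L I$. There is no real obstacle here — every step is a standard calculus fact — and the only bookkeeping point is to confirm that matrix Riemann integrals commute with transposition and respect the PSD cone, both of which follow by evaluating the quadratic form $v^\top G(z,z') v = \int_0^1 v^\top \nabla^2 \hat{f}(\cdot) v\, ds$ for arbitrary $v$.
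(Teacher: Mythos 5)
Your proof is correct: the averaged-Hessian construction $G(z,z') = \int_0^1 \nabla^2\hat f(z'+s(z-z'))\,ds$ together with the fundamental theorem of calculus, Schwarz symmetry, and Loewner-order preservation under integration is exactly the standard argument, and it is the one used in the cited source (Lemma 1 of \cite{QuPDGD2019}); the paper itself does not reprove this lemma but simply imports it. No gaps.
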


 Using Lemma \ref{lem:grad2mat}, we can write \eqref{eq:nonlinkkt} as $\tilde{H}^dq^d = b^d$ where
\begin{equation}\label{eq:kktmatdefs}\tilde{H}^d := \begin{bmatrix}
    G(z^d, 0) & (J^d)^\top \\ J^d
\end{bmatrix}, \quad b^d := \begin{bmatrix}
    0 \\ \begin{bmatrix}
        x^{(i,\kappa)} \\ \zeta^{(i,\kappa)}
    \end{bmatrix}
\end{bmatrix},\end{equation} and $q^d$ is short hand notation for $(z^d, \lambda^d)$. Furthermore, when taking the difference of the KKT conditions for equations \eqref{eq:cent_mpc} and \eqref{eq:dist_mpc}, we get, 
$$\begin{bmatrix}
    \nabla \hat{f}(z^c) - \nabla \hat{f}(z^d) + (J^c)^\top \lambda^c - (J^d)^\top \lambda^d \\ J^cz^c - J^d z^d  
\end{bmatrix} = \begin{bmatrix}
    0 \\ \begin{bmatrix}
        x^\perp \\ \zeta^\perp 
    \end{bmatrix}
\end{bmatrix},$$
where the notation $z^c$ and $J^c := J(A,B)$ denote the solution and constraint jacobian of $\pc_t^\ell$ respectively and the vectors on the RHS are $x^\perp := x-x^{(i,\kappa)}$ and $\zeta^\perp := \zeta-\zeta^{(i,\kappa)}$. Applying Lemma \ref{lem:grad2mat} to $\nabla \hat{f}(z^c) - \nabla \hat{f}(z^d)$, we get
\begin{equation}\label{eq:kktdiffs}
    \underbrace{\begin{bmatrix}
        G(z^c,z^d) & (J^c)^\top \\
        J^c
    \end{bmatrix}}_{:=H^c}q^c - \underbrace{\begin{bmatrix}
        G(z^c, z^d) & (J^d)^\top \\ J^d
    \end{bmatrix}}_{:=H^d}q^d = \underbrace{\begin{bmatrix}
        0 \\ \begin{bmatrix}
            x^\perp \\ \zeta^\perp
        \end{bmatrix}
    \end{bmatrix} }_{:=b^\perp} 
\end{equation}
where $q^c$ is short hand for $(z^c,\lambda^c)$). 

The purpose of rewriting the KKT conditions using the $H$ matrices defined above is because we have the following result from \cite{Shin2022sensitivity} and \cite{Sungho2023nearopt} on such $H$ matrices: 
\begin{lemma}[Theorem 3.6 in \cite{Shin2022sensitivity} and Theorems A.3 and A.4 in \cite{Sungho2023nearopt}]\label{lem:spatdec}
     Consider the following matrix $H$ 
    $$H = \begin{bmatrix}
        G & J(A,B)^\top \\
        J(A,B) & 0 
    \end{bmatrix}, $$ such that $G$ is a block diagonal matrix with its singular values bounded as $\mu I \preceq G \preceq L I$, and the system $(A,B)$ is $(L,\gamma)$-stabilizable. 
Further, let $\mathcal{G} = \{\mathcal{V},\mathcal{E}\}$ be any graph whose set of nodes partitions $H$ such that for any $i,j \in \mathcal{V}$, if $\dist(i,j) > 1$, then $H[i,j] = 0$. Then, the KKT matrix $H$ has singular values bounded as $\mu_H \leq \sigma(H) \leq L_H$ and its inverse $H^{-1}$ is spatially exponentially decaying with respect to the graph $\mathcal{G}$ 
    \begin{equation}\label{eq:spatialexpdecay}
        \lVert H^{-1}[i,j]\rVert \leq \alpha\rho^{\dist(i,j)},
    \end{equation}
    where the constants are defined as: {\small
    $$\begin{aligned}
        \mu_J &:= \frac{(1-\gamma)^2}{L^2(1-L^2)}, \quad L_H := 2L+1 \\
        \mu_H &:= \left(\frac{1}{\mu} + \left(1 + \frac{2L_H}{\mu} + \frac{L_H^2}{\mu^2}\right)\frac{L_H}{\mu_J}\right)^{-1},\\
        \rho &:= \left(\frac{L_H^2 - \mu_H^2}{L_H^2 + \mu_H^2}\right)^{\frac{1}{2}}, \quad \alpha := \frac{L_H}{\mu_H^2\rho}.
    \end{aligned}$$ }
\end{lemma}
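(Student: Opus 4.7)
The lemma couples two claims about the KKT matrix $H$: a quantitative condition-number estimate $\mu_H\le\sigma(H)\le L_H$ and a spatial exponential decay of $H^{-1}$ at a rate $\rho$ set by that condition number. My plan is to handle the conditioning first and then invoke a Demko--Moss--Smith style polynomial-approximation argument that converts the graph-bandedness of $H$ (one-hop in the partition induced by $\mathcal{G}$) into exponential decay of $\|H^{-1}[i,j]\|$ in $\dist(i,j)$.

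\textbf{Conditioning of $H$.} The upper bound $\sigma_{\max}(H)\le L_H$ follows by a block triangle inequality: $\|G\|\le L$, and reading off \eqref{eq:jacobian} with $\|A\|,\|B\|\le L$ gives $\|J(A,B)\|\le 2L+1$, which combine to control $\|H\|$ by a multiple of $L$. For the lower bound, a Schur-complement expansion of $H^{-1}$ around the $(2,2)$ zero block expresses every block of $H^{-1}$ in terms of $G^{-1}$ and $S^{-1}$ with $S:=JG^{-1}J^\top$, so bounding $\|H^{-1}\|$ reduces to $\|G^{-1}\|\le 1/\mu$ together with $\|S^{-1}\|$. The latter in turn reduces to a strictly positive lower bound $\sigma_{\min}(J(A,B))^2\ge\mu_J$, and this is where $(L,\gamma)$-stabilizability enters: given any $r$ in the range of $J^\top$, one exhibits a primal reconstruction $z$ with $Jz=r$ by rolling out the closed-loop dynamics under the stabilizing feedback $u_\tau=-Ky_\tau$; the contraction $\|(A-BK)^t\|\le L\gamma^t$ summed geometrically produces a horizon-independent constant $\mu_J$. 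Collecting the pieces yields $\sigma_{\min}(H)\ge\mu_H$ with the formula in the lemma.

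\textbf{Spatial decay.} The matrix $H$ is symmetric with bandwidth one in the graph metric: $H[i,j]=0$ whenever $\dist(i,j)>1$, so for any polynomial $p$ of degree $k$ the block $p(H)[i,j]$ vanishes whenever $\dist(i,j)>k$. To approximate $H^{-1}$, I would pass to the positive definite matrix $H^2$ whose spectrum lies in $[\mu_H^2,L_H^2]$, approximate $1/t$ on that interval by a Chebyshev polynomial $q_k$ of degree $k$, and set $p_k(x):=x\,q_k(x^2)$ of degree $2k+1$. The classical Chebyshev error estimate then gives $\|H^{-1}-p_k(H)\|\le\alpha\rho^{2k+1}$ at the rate specified in the lemma. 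For any fixed $(i,j)$, choosing $k$ just below $\dist(i,j)/2$ makes $p_k(H)[i,j]=0$, so the block entry of $H^{-1}$ is controlled by the operator-norm approximation error, producing \eqref{eq:spatialexpdecay}.

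\textbf{Main obstacle.} The bottleneck is the horizon-independent lower bound $\mu_J$ on $\sigma_{\min}(J(A,B))^2$ via $(L,\gamma)$-stabilizability. The rollout-plus-feedback construction must yield a primal reconstruction whose norm does not grow in the horizon $\ell$, which forces careful telescoping of the closed-loop contraction $\|(A-BK)^t\|\le L\gamma^t$ into a summable estimate; the algebra here is what ultimately determines $\mu_H$ and hence the decay rate $\rho$. Once $\mu_H$ is secured, the polynomial-approximation step is standard, and the $H\mapsto H^2$ reduction handles the indefinite saddle-point structure without affecting the exponential rate.
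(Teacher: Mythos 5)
Your proposal is correct and follows essentially the same route as the paper: the conditioning bounds via the block upper bound on $\|H\|$, the Schur-complement expansion of $H^{-1}$ reducing to $\|G^{-1}\|\le 1/\mu$ and a lower bound $JJ^\top\succeq\mu_J I$ obtained from $(L,\gamma)$-stabilizability, followed by exponential decay of the inverse of a graph-banded matrix. The only difference is one of exposition: the paper imports both the Demko--Moss--Smith-style decay step (Theorem 3.6 of the cited sensitivity paper) and the stabilizability-to-$\mu_J$ rollout (Theorem A.4 of the cited near-optimality paper) by citation, whereas you sketch their content directly; your Chebyshev argument on $H^2$ and your closed-loop reconstruction of a bounded feasible $z$ are precisely what those cited results do.
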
 

With the results above, we are ready to prove Theorem \ref{thm:diffdec}.  

\begin{proof}[Proof of Theorem \ref{thm:diffdec}]
    By \eqref{eq:kktdiffs}, we have,
    $$\begin{aligned}&b^\perp = H^cq^c - H^dq^d = H^c(q^c-q^d) - (H^d-H^c)q^d \\
    \implies &q^c-q^d = (H^{c})^{-1}\left(b^\perp - H^\perp q^d \right)\end{aligned}$$
    where $H^\perp =H^c - H^d$. Then we obtain the upper bound: {\footnotesize
    \begin{align} 
        &\lVert q^c[i] - q^d[i]\rVert = \bigg\lVert \sum_{j \in \mathcal{V}} (H^c)^{-1}[i,j](b^\perp[j] - \sum_{k \in \mathcal{V}}H^\perp[j,k]q^d[k]) \bigg \rVert \nonumber \\
        &\leq \alpha\sum_{j \in \mathcal{V}\setminus \knbhd[i]}\rho^{\dist(i,j)}\bigg(\lVert b^\perp \rVert \nonumber \\
        &+  \sum_{k \in \partial\knbhd[i]\cap\nbhd[j]}\sum_{m\in \knbhd[i]}\lVert H^\perp[j,k]\rVert \lVert (\tilde{H}^d)^{-1}[k,m]\rVert \lVert b^d[m]\rVert\bigg) \nonumber \\
        &\leq \alpha \sum_{j\in\mathcal{V}\setminus\knbhd[i]}\rho^{\dist(i,j)}\bigg(\lVert b^\perp \rVert \nonumber \\ &+ 2\alpha L\sum_{k \in \partial \knbhd[i]\cap \nbhd[j]}\sum_{m \in \knbhd[i]}\rho^{\dist(k,m)}\lVert b^d\rVert \bigg) \nonumber \\
        &\leq \alpha \sum_{d = \kappa + 1}^{\infty} (\rho/\delta_{\mathcal{S}} )^{d}p(d)\delta_{\mathcal{S}}^d \bigg(\lVert b^\perp \rVert \nonumber \\ &\qquad \qquad \qquad \qquad \qquad + 2\alpha L p(1)\sum_{s=0}^{2\kappa}(\rho/\delta_{\mathcal{S}})^{s}p(s)\delta_{\mathcal{S}}^{s}\lVert b^d\rVert \bigg)  \nonumber \\
        &\leq \frac{2\alpha^2\delta_{\mathcal{S}} L p(1)}{(1-\delta_{\mathcal{S}})^2}\left(\sup_{d \in \mathbb{Z}^+}\left(\frac{\rho}{\delta_{\mathcal{S}}}\right)^dp(d)\right)^2(\lVert b^\perp \rVert + \lVert b^d\rVert)\delta_{\mathcal{S}}^\kappa \nonumber 
    \end{align} }
    
   In the first inequality, we have used (i) Lemma \ref{lem:spatdec} applied to $H^{c}$; (ii) for $j  \in \knbhd[i]$, the $j$'th entry/row of $b^\perp$ and $H^\perp$ are zero (hence the outer sum is over $j\in \mathcal{V}\setminus \knbhd[i]$); (iii) $H^\perp[j,k]$ is nonzero only when $k\in \nbhd[j]$ since $H^\perp$  consists of networked matrices, and $q^d[k]$ is only nonzero in $\knbhd[i]$ (hence the inner sum is over $k \in \partial\knbhd[i]\cap\nbhd[j]$); (iv) $\tilde{H}^d q^d = b^d$. 
   The second inequality uses Lemma \ref{lem:spatdec} applied to $\tilde{H}^{d}$ and the fact that $\lVert H^\perp[j,k]\rVert \leq 2L$. 
   The third inequality  follows from Assumption \ref{assump:d_bands}.
   Finally, the inequality in \eqref{eq:diffdec} follows from $\lVert b \rVert \leq \lVert x \rVert + \sum_{\tau=0}^{\ell-1}\lVert \zeta_\tau \rVert$ for either $b^\perp$ or $b^d$. 
\end{proof} 

\subsection{Proof of Step 2 (ISS)}
To show ISS, we first show a recursive upper bound on the states generated by $DTPC_k$, denoted as $x_{0:T}$. 
\begin{theorem}\label{thm:statenorm}
    Let $C:= \max(\Gamma, \Omega)$ and we denote $\delta := \max(\delta_{\mathcal{S}}, \delta_{\mathcal{T}})$ where $\Gamma$ and $\delta_{\mathcal{S}}$ are as in Theorem \ref{thm:diffdec} and $\Omega$ and $\delta_{\mathcal{T}}$ are from Lemma \ref{lem:termlpz}. Under the Assumptions of Theorem \ref{thm:perf_guarantees}, we have the following upper bounds on the states generated by $DTPC_k$. For $1 \leq t +1\leq k$: {\small
    \begin{multline}\label{eq:snormk}
        \lVert x_{t+1}\rVert \leq C\sum_{m=0}^{t}(L_N\delta^{\kappa+m} + 2C^2\delta^{2k-m-3})\lVert x_{t-m}\rVert \\
        + C\lVert x_0 \rVert + W, 
    \end{multline} }
    for $k \leq t+1 \leq T-k$: {\small
    \begin{equation}\label{eq:snormTk}
        \lVert x_{t+1} \rVert \leq C\sum_{m=0}^{k-1}(L_N\delta^{\kappa+m} + 2C^2\delta^{2k-m-3})\lVert x_{t-m}\rVert + W, 
    \end{equation}}
    and finally for $t+1 > T-k$: {\small
    \begin{multline}\label{eq:snormT}
        \lVert x_{t+1}\rVert \leq  \sum_{m=0}^{t+k-T}2CL_N\lVert x_{t-m}\rVert \delta^{\kappa+m} \\
            + C\delta^{t+k-T+1}\lVert x_{T-k}\rVert + W.
    \end{multline}}
    where $L_N := CLN$ and $W$ is as in Theorem \ref{thm:perf_guarantees}.
\end{theorem}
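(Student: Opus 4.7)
The plan is to view the state $x_{t+1}$ produced by $DTPC_k$ as a perturbation of the centralized $PC_k$ rollout, bound the per-step perturbation using the spatial-decay estimate of Theorem~\ref{thm:diffdec}, and propagate it forward across successive predictive horizons using the temporal-decay estimate from Lemma~\ref{lem:termlpz}. The case split $t{+}1\le k$, $k\le t{+}1\le T{-}k$, and $t{+}1>T{-}k$ only records whether a full look-ahead of length $k$ is still available and whether the terminal surrogate $F$ or the true $f_T$ is in play.

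First I would bound the instantaneous spatial error. Let $u_t^{PC}$ be the action that centralized $PC_k$ would deploy from the same observed state $x_t$ and forecasts, and write $x_{t+1}=A x_t+B u_t^{PC}+w_t+B(u_t-u_t^{PC})$. Theorem~\ref{thm:diffdec} applied agent-wise (with horizon $\ell=k$) gives $\lVert u_t[i]-u_t^{PC}[i]\rVert\le \Gamma(\lVert x_t\rVert+D_k)\delta_{\mathcal{S}}^{\kappa}$ for every $i$, and summing across the $N$ agents together with $\lVert B\rVert\le L$ from Assumption~\ref{assump:sys} yields a single-step perturbation of size $L_N(\lVert x_t\rVert+D_k)\delta_{\mathcal{S}}^{\kappa}$. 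Next, let $y_\tau^{(s)}$ denote the $\tau$-th predictive state in problem~\eqref{eq:cent_mpc} solved at decision time $s$ from $x_s$, so that $y_1^{(t)}=A x_t+B u_t^{PC}+w_t$. I would then telescope $y_1^{(t)} = y_{m+1}^{(t-m)} + \sum_{j=0}^{m-1}\bigl(y_{j+1}^{(t-j)}-y_{j+2}^{(t-j-1)}\bigr)$ and apply Lemma~\ref{lem:termlpz} to each consecutive pair: the two predictive rollouts compared differ only in their initial conditions by the spatial perturbation already controlled, and the difference at horizon $j{+}1$ decays like $\Omega\,\delta_{\mathcal{T}}^{j+1}$. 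Collapsing $\delta:=\max(\delta_{\mathcal{S}},\delta_{\mathcal{T}})$ and $C:=\max(\Gamma,\Omega)$ produces the first summand $C L_N \delta^{\kappa+m}\lVert x_{t-m}\rVert$ in \eqref{eq:snormTk} (and the same pattern, truncated to $m\le t$, in \eqref{eq:snormk}).

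Second, the summand $2C^3\delta^{2k-m-3}\lVert x_{t-m}\rVert$ captures the mismatch between the $k$-horizon predictive problem with terminal surrogate $F$ and the true infinite-tail cost-to-go. Applying Lemma~\ref{lem:termlpz} to the discrepancy between the $k$-horizon PC rollout and its extension yields a contribution that decays like $\delta_{\mathcal{T}}^{k}$ at horizon $k$; pulling this mismatch back to horizon $1$ through the same telescoping mechanism as above absorbs the additional $\delta^{k-3}$ factor and the $-m$ shift in the exponent. In the early phase $t{+}1\le k$, the telescope cannot be unrolled all the way back to time $0$, so the residual $y_{t+1}^{(0)}$ survives and is bounded by $C\lVert x_0\rVert$, producing the final $C\lVert x_0\rVert$ term in \eqref{eq:snormk}. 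In the late phase $t{+}1>T{-}k$ the surrogate $F$ is replaced by the exact $f_T$, so the terminal-mismatch contribution vanishes; unrolling from $x_{T-k}$ using the exact terminal cost and the temporal-decay estimate produces the residual $C\delta^{t+k-T+1}\lVert x_{T-k}\rVert$ in \eqref{eq:snormT}.

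The main obstacle will be making the telescoping step clean: producing a decomposition whose successive differences each correspond to a single application of Lemma~\ref{lem:termlpz}, so that the two independent error sources — spatial truncation of the $\kappa$-hop problem and terminal-surrogate mismatch — separate into the two exponents $\delta^{\kappa+m}$ and $\delta^{2k-m-3}$ without cross terms. Once that separation is in place, tracking the $L_N=CLN$ and $2C^3$ prefactors, folding the cumulative disturbance response $\sum_\tau \lVert w_\tau\rVert$ into $W$ using Assumption~\ref{assump:sys}, and handling the three-case boundary bookkeeping is routine.
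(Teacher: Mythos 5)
Your overall architecture --- bound the one-step spatial perturbation $\lVert x_{s+1}-\pc_s^k(x_s)_{y_1}\rVert\le L_N(\lVert x_s\rVert+D_k)\delta^\kappa$ via Theorem~\ref{thm:diffdec}, then telescope $\pc_t^k(x_t)_{y_1}$ across successive decision times and control each increment with the temporal decay --- is exactly the paper's (cf.\ \eqref{eq:cntrlbnd} and the proof of Theorem~\ref{thm:statenorm}). But there is a genuine gap at the heart of the telescoping step, one you partially flag yourself. The consecutive terms $\pc_{t-m}^k(x_{t-m})_{y_{m+1}}$ and $\pc_{t-m-1}^k(x_{t-m-1})_{y_{m+2}}$ do \emph{not} differ only in their initial conditions: by the principle of optimality the second equals $\pc_{t-m}^{k-1}(x_{t-m}^c)_{y_{m+1}}$ with $x_{t-m}^c:=\pc_{t-m-1}^k(x_{t-m-1})_{y_1}$, so you are comparing a $k$-horizon problem with a $(k-1)$-horizon problem whose look-ahead windows end at different times. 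Neither Lemma~\ref{lem:termlpz} (which is stated for the terminal-\emph{constrained} problem $\pct$) nor Corollary~\ref{cor:lpsz} applies to a pair of free-terminal problems of different lengths, so the step ``the difference at horizon $j+1$ decays like $\Omega\,\delta_{\mathcal T}^{j+1}$'' does not go through as written. The paper's resolution (Lemma~\ref{lem:diffhorizon}) is to rewrite both as terminal-constrained problems of the common length $k-1$, namely $\pct_{t-m}^{k-1}(x_{t-m},\bar y)$ and $\pct_{t-m}^{k-1}(x_{t-m}^c,\bar y')$ with $\bar y=\pc_{t-m}^{k}(x_{t-m})_{y_{k-1}}$ and $\bar y'=\pc_{t-m}^{k-1}(x_{t-m}^c)_{y_{k-1}}$; Lemma~\ref{lem:termlpz} then yields $C\left(\delta^{m+1}\lVert x_{t-m}-x_{t-m}^c\rVert+\delta^{k-m-2}\lVert\bar y-\bar y'\rVert\right)$, and $\lVert\bar y-\bar y'\rVert$ is bounded crudely by $O\!\left(\delta^{k-1}\lVert x_{t-m}\rVert + D_k/(1-\delta)\right)$ via Corollary~\ref{cor:lpsz}. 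This is the actual origin of the $2C^2\delta^{2k-m-3}$ summand: it is the horizon-shift (endpoint) mismatch between two finite-horizon problems, not a comparison of the surrogate $F$ against an ``infinite-tail cost-to-go,'' which appears nowhere in the argument.

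Your treatment of the boundary cases is otherwise sound: in the early phase the surviving term $\pc_0^k(x_0)_{y_{t+1}}$ indeed produces $C\lVert x_0\rVert$ (with the disturbance response folded into $W$), and in the late phase the successive problems $\pc_{t-m}^{T-t+m}$ all share the common endpoint $T$, so the principle of optimality preserves the horizon, the endpoint-mismatch term disappears, and Corollary~\ref{cor:lpsz} applied to the initial-condition difference alone yields \eqref{eq:snormT}. To complete the proof you would need to supply the missing same-horizon reduction for the middle and early phases.
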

Theorem~\ref{thm:statenorm} directly leads to the ISS in \eqref{eq:distISS} by induction. 

\begin{proof}[Proof of \eqref{eq:distISS}]
First, it is easy to verify that the ISS bound holds for $t=0$. For the induction step, we show the case $2k\leq t\leq T-k-1$, and the other cases are similar. In other words, we assume the ISS bound holds for all $t_0\leq t$ for some $ t\in[2k, T-k-1]$, and now show the ISS bound also holds for $t+1$. Since $k<t+1 \leq T-k$, we have from Theorem \ref{thm:statenorm} that {\small
\begin{equation}\label{eq:sub1} \lVert x_{t+1}\rVert \leq C\sum_{m=1}^{k}(L_N\delta^{\kappa+m-1} + 2C^2\delta^{2k-m-2})\lVert x_{t-m+1}\rVert + W.\end{equation}}
By the induction hypothesis, the upper bound in \eqref{eq:sub1} becomes,  
\begin{align}\label{eq:sub2} &C\sum_{m=1}^{k}\overbrace{(L_N\delta^{\kappa+m-1} + 2C^2\delta^{2k-m-2}) }^{:=\Lambda_m}\nonumber\\ \cdot &\left(\frac{C}{\xi}(1-\xi)^{t-m-k+1}\lVert x_0\rVert + \frac{W}{\xi}\right) + W,\end{align}
where the $\max$ disappears since $t \geq 2k-1$. Then, note the choice of $k$ and $\kappa$ in Theorem \ref{thm:perf_guarantees} guarantees that $C\sum_{m}\Lambda_m \leq 1-\xi$, 
which allows us to upper bound \eqref{eq:sub2} as

\begin{align}\label{eq:sub3} &\left(\frac{C}{\xi}(1-\xi)^{t-k+1}\lVert x_0\rVert \right)C\sum_{m=1}^{k}\Lambda_m(1-\xi)^{-m}+\frac{W}{\xi}.\end{align} Finally, the selection of $1-\xi = \sqrt{\delta}$, $k$, and $\kappa$ in Theorem \ref{thm:perf_guarantees} further guarantees that 
$C\sum_{m}\Lambda_m(1-\xi)^{-m} \leq 1,$
which obtains the desired bound in \eqref{eq:distISS} and concludes the induction.  \end{proof}  

The remainder of this subsection will be devoted to proving Theorem \ref{thm:statenorm}. We first require a couple definitions and supplementary results. First, we define the following terminal state predictive control problem similar to \eqref{eq:cent_mpc}: 
{\small
\begin{align}\label{eq:term_mpc}
    &\pct_t^k(x,\zeta,\bar{x}) := \\
    \argmin_{y_{0:k}, v_{0:k-1}} &\sum_{\tau=0}^{k}f_{t+\tau}(y_\tau) + \sum_{\tau=1}^{k}c_{t+\tau}(v_{\tau-1}), \nonumber \\
    \mathrm{s.t.} \: \: &y_{\tau+1} = Ay_\tau + Bv_\tau + \zeta_\tau, \quad \forall \tau \in [k-1], \nonumber \\
    & y_0 = x, \quad y_k = \bar{x} , \nonumber
\end{align} }
By \cite{Lin2021perturb}, the following lipschitz property of $\pct_t^k$ holds.  
\begin{lemma}\label{lem:termlpz}
    (Theorem 3.3 in \cite{Lin2021perturb}) Under assumptions \ref{assump:sys} and \ref{assump:costs} and for horizon length $k \geq d$, the controllability index, given any $(x,\zeta,\bar{x})$ and $(x', \zeta', \bar{x}')$, {\small 
    \begin{align}\label{eq:termlpsz}
        &\lVert \pct_t^{k}(x,\zeta,\bar{x})_{y_m} - \pct_t^{k}(x',\zeta',\bar{x}')_{y_m} \rVert  \nonumber \\
        &\leq \Omega\left(\delta_{\mathcal{T}}^{m}\lVert x-x'\rVert + \delta_{\mathcal{T}}^{k-m}\lVert \bar{x} - \bar{x}'\rVert + \sum_{l=0}^{k-1}\delta_{\mathcal{T}}^{\lvert m-l\rvert}\lVert \zeta_l - \zeta_l'\rVert \right) 
    \end{align} }
    where $D := \sup_{t \in [T-k]} \lVert w_t\rVert$ and the constants $\Omega$ and $\delta_{\mathcal{T}}$ are defined in Theorem 3.3 in \cite{Lin2021perturb}
\end{lemma}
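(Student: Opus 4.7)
This Lipschitz bound is stated as Theorem 3.3 of \cite{Lin2021perturb}, but the plan here is to sketch how one would prove it by mirroring the KKT-based spatial decay argument used in Step 1, now along the time dimension instead of the network dimension. I would begin by writing down the KKT conditions for both $\pct_t^k(x,\zeta,\bar{x})$ and $\pct_t^k(x',\zeta',\bar{x}')$, noting that the only differences between the two systems live in three places: the initial-state row ($x$ vs.\ $x'$), the $k$ disturbance rows ($\zeta_\tau$ vs.\ $\zeta_\tau'$), and the terminal-state row ($\bar{x}$ vs.\ $\bar{x}'$). Applying Lemma~\ref{lem:grad2mat} to the cost gradient difference exactly as in \eqref{eq:kktdiffs} linearizes the system, so the difference in primal-dual solutions $\Delta q$ satisfies $H\,\Delta q = \Delta b$ where $\Delta b$ has three nonzero blocks corresponding to the three perturbations above.

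The crucial structural observation is that the KKT matrix $H$ here, with blocks indexed by the time steps $\tau \in \{0,1,\dots,k\}$, is block tridiagonal with respect to the \emph{path graph} on these indices: the cost Hessian is block diagonal in $\tau$, the dynamics Jacobian $J(A,B)$ from \eqref{eq:jacobian} only couples consecutive times, and the terminal constraint $y_k = \bar{x}$ adds one extra dual variable attached only to time $k$. Because $(A,B)$ is $(L,\gamma)$-stabilizable (Assumption~\ref{assump:sys}) and costs are $\mu$-strongly convex and $L$-smooth (Assumption~\ref{assump:costs}), a direct application of Lemma~\ref{lem:spatdec}, now with the time-index path graph in place of $\mathcal{G}$, yields a temporal inverse-decay bound $\lVert [H^{-1}]_{m,\ell}\rVert \le \alpha \rho^{|m-\ell|}$ for some $\rho \in (0,1)$, which we rename $\delta_{\mathcal{T}}$.

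From here I would extract the $m$-th primal block of $\Delta q$ by expanding $\Delta y_m = \sum_\ell [H^{-1}]_{m,\ell}\,\Delta b[\ell]$; since $\Delta b$ is supported only at time $0$ (initial state), times $1,\dots,k$ (disturbances), and time $k$ (terminal state), we get
\begin{equation*}
\lVert \Delta y_m\rVert \leq \alpha\rho^{m}\lVert x - x'\rVert + \alpha\sum_{\ell=0}^{k-1}\rho^{|m-\ell|}\lVert \zeta_\ell - \zeta_\ell'\rVert + \alpha\rho^{k-m}\lVert \bar{x} - \bar{x}'\rVert,
\end{equation*}
which is exactly \eqref{eq:termlpsz} with $\delta_{\mathcal{T}} := \rho$ and $\Omega$ an appropriate aggregation of $\alpha$ and the constants from Lemmas~\ref{lem:grad2mat} and \ref{lem:spatdec}.

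The main obstacle I anticipate is verifying that the extra terminal \emph{equality} constraint $y_k = \bar{x}$ does not violate the hypotheses of Lemma~\ref{lem:spatdec}. Unlike the open-terminal problem \eqref{eq:cent_mpc}, the hard terminal constraint contributes an additional dual variable and rigidifies the boundary; one must confirm that the KKT matrix remains invertible with spectrum bounded \emph{uniformly in $k$}. This is exactly where Assumption~\ref{assump:sys}(2) on the reduced controllability matrix and the hypothesis $k \ge d$ (the controllability index) become essential: together they guarantee that any admissible $\bar{x}$ can be reached from any $x$ within the horizon and that the terminal block is well-conditioned, so the constants $\mu_H, L_H$ in Lemma~\ref{lem:spatdec} can be taken independent of $k$, yielding a decay rate $\delta_{\mathcal{T}} < 1$ that is uniform in the horizon length.
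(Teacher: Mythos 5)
The paper does not prove this lemma at all: it is imported verbatim as Theorem 3.3 of \cite{Lin2021perturb}, so there is no in-paper proof to compare against. Your sketch is nevertheless a sensible reconstruction, and notably it is the same technique the paper itself uses for the \emph{unconstrained}-terminal analogue in Corollary~\ref{cor:lpsz}: linearize the KKT difference via Lemma~\ref{lem:grad2mat}, observe that the KKT matrix is banded with respect to the temporal path graph, and invoke the exponential off-diagonal decay of $H^{-1}$ from Lemma~\ref{lem:spatdec}. The cited source proves its Theorem 3.3 by an argument of essentially this banded-inverse-decay type as well, so your route is faithful in spirit rather than genuinely different. Two remarks on where your sketch is not yet a proof. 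First, the step you flag as the ``main obstacle'' is indeed the only real content: Lemma~\ref{lem:spatdec} as stated applies to $J(A,B)$ from \eqref{eq:jacobian}, which has an initial-condition row but no terminal row. Appending the row enforcing $y_k=\bar{x}$ changes the verification of $JJ^\top \succeq \mu_J I$; surjectivity of the augmented $J$ is exactly $k$-step reachability, which is why $k\geq d$ and $\minsv(\mathcal{C}^d)\geq\sigma$ from Assumption~\ref{assump:sys} are needed, and the resulting $\mu_J$ will depend on $\sigma$ rather than only on $(L,\gamma)$. You correctly identify this but do not carry it out, so as written the sketch asserts rather than establishes uniformity of $\mu_H$ in $k$. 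Second, a minor bookkeeping point: the disturbance $\zeta_\ell$ sits in the constraint row for $y_{\ell+1}$, so the natural decay exponent is $\lvert m-(\ell+1)\rvert$ rather than $\lvert m-\ell\rvert$; this costs only a factor $\rho^{-1}$ absorbable into $\Omega$, and the paper's own Corollary~\ref{cor:lpsz} glosses over the same shift.
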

Furthermore, as a Corollary of Lemma \ref{lem:spatdec}, we have the following lipschitz result on $\pc_t^\ell$ 
\begin{corollary}\label{cor:lpsz}
    Let $\ell \in [T]$ and $q$ denote the optimal vector of primal and dual variables to $\pc_t^\ell(x,\zeta; g(\cdot))$ and $q'$ be that of $\pc_t^\ell(x',\zeta'; g(\cdot))$. We partition $q$ and $q'$ by the the temporal graph $\mathcal{G}_{\ell} := \{ \{[\ell]\}, \{(0,1),..., (\ell-1,\ell)\}\}$ such that we can denote $q_m$ and $q_m'$ indexed by $m \in [\ell]$. The lipschitz property then follows from Lemma \ref{lem:spatdec}: 
    {\small
    \begin{align}\label{eq:lpsz}
        \lVert q_m-q_m'\rVert & \leq \alpha\left(\rho^{m}\lVert x-x'\rVert + \sum_{l=0}^{\ell-1}\rho^{\lvert m-l\rvert}\lVert \zeta_{l} - \zeta'_{l}\rVert \right)\\
        &\leq \alpha\left(\rho^{m}\lVert x-x'\rVert + \frac{2D}{1-\rho} \right),
    \end{align} }
\end{corollary}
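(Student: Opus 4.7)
The plan is to mimic the KKT-matrix argument used in Step~1, but with the temporal graph $\mathcal{G}_\ell$ in place of the underlying spatial graph $\mathcal{G}$, and with two \emph{centralized} PC problems in place of the centralized/distributed pair. The key observation is that both instances $\pc_t^\ell(x,\zeta;g)$ and $\pc_t^\ell(x',\zeta';g)$ share the same constraint Jacobian $J^c = J(A,B)$, so after linearizing via Lemma~\ref{lem:grad2mat} the two KKT systems differ only on their right-hand sides.

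First, I would write out the KKT conditions for both problems (using the definition of $\hat f$ in \eqref{eq:totcost}), apply Lemma~\ref{lem:grad2mat} with $G := G(z,z')$ to the difference $\nabla\hat f(z) - \nabla\hat f(z')$, and subtract, obtaining the exact linear system
\[
H^c (q - q') \;=\; b^c - b^{c'}, \quad H^c = \begin{bmatrix} G & (J^c)^\top \\ J^c & 0 \end{bmatrix},
\]
where $b^c - b^{c'}$ has nonzero blocks only at the constraint rows corresponding to the initial condition (value $x-x'$) and to each dynamics equation (values $\zeta_l - \zeta_l'$ for $l=0,\ldots,\ell-1$).

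Next, I would verify that $H^c$ fits the hypotheses of Lemma~\ref{lem:spatdec} when the partition is taken with respect to the temporal graph $\mathcal{G}_\ell$: grouping the variables $(y_m,v_m,\lambda_m)$ at each time $m$, the block $G$ is block-diagonal in $m$ (stage costs decouple across time) and $J^c$ couples only the consecutive blocks $m$ and $m+1$, so $H^c[m,l]=0$ whenever $|m-l|>1$. Assumption~\ref{assump:sys} guarantees that $(A,B)$ is $(L,\gamma)$-stabilizable (via the reduced controllability matrix lower bound) and that $\mu I \preceq G \preceq LI$ holds from Assumption~\ref{assump:costs}, so Lemma~\ref{lem:spatdec} yields
\[
\|(H^c)^{-1}[m,l]\| \le \alpha\, \rho^{|m-l|}.
\]

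Finally, I would expand $q_m - q_m' = \sum_l (H^c)^{-1}[m,l](b^c-b^{c'})[l]$ and bound entry-wise: the $l=0$ block contributes $\alpha\rho^m\|x-x'\|$, and the dynamics rows contribute $\sum_{l=0}^{\ell-1}\alpha\rho^{|m-l|}\|\zeta_l-\zeta_l'\|$, giving exactly the first inequality in \eqref{eq:lpsz}. The second inequality then follows by invoking the uniform disturbance bound $\|\zeta_l\|,\|\zeta_l'\|\le D$ so that $\|\zeta_l-\zeta_l'\|\le 2D$, and summing the two-sided geometric series $\sum_{l\in\mathbb{Z}}\rho^{|m-l|} \le 2/(1-\rho)$.

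The step I expect to require the most care is the second one: one must check that the natural temporal partition really is ``neighborhood-1-coupled'' in the sense required by Lemma~\ref{lem:spatdec}, which amounts to carefully aligning the dual variable $\lambda$ at each time step with the correct primal block so that the entries of $J^c$ and $(J^c)^\top$ straddle only adjacent temporal nodes. Once this indexing convention is pinned down (the obvious choice is to attach $\lambda_{\tau+1}$, associated with the constraint $y_{\tau+1}-Ay_\tau - Bv_\tau = \zeta_\tau$, to temporal node $\tau+1$), everything else is a direct specialization of the argument already used for Theorem~\ref{thm:diffdec}.
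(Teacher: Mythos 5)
Your proposal is correct and follows essentially the same route as the paper: the paper likewise observes (via the block structure in its Equation \eqref{eq:H_submats}) that the KKT matrix is tridiagonal with respect to the temporal graph $\mathcal{G}_\ell$, applies Lemma \ref{lem:spatdec} to get $\lVert H^{-1}_{m,l}\rVert \leq \alpha\rho^{|m-l|}$, and expands $q_m - q_m' = \sum_l H^{-1}_{m,l}(b_l - b_l')$ to obtain the bound. Your additional justification of the second inequality via $\lVert \zeta_l - \zeta_l'\rVert \leq 2D$ and the two-sided geometric series is the intended (if unstated) argument in the paper.
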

Utilizing the above two results, we attain the following:
\begin{lemma}\label{lem:diffhorizon}
    Let $y_{m+1} := \pc_{t-m}^k(x_{t-m})_{y_{m+1}}$ and $y_{m+1}' := \pc_{t-m-1}^{k}(x_{t-m-1})_{y_{m+2}}$, then we have the following upper bound on the norm of their difference: {\small
    \begin{multline}\label{eq:diffhorizon}
        \lVert y_{m+1} - y_{m+1}'\rVert \leq CL_N(\lVert x_{t-m-1}\rVert + D_k)\delta^{\kappa+m+1} \\ + C^2\delta^{k-m-2}\left(\delta^{k-1}(\lVert x_{t-m}\rVert + C\delta\lVert x_{t-m-1}\rVert) + \frac{6C}{1-\delta}D_k\right),
    \end{multline} }
\end{lemma}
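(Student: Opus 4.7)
The main tool is a principle-of-optimality rewrite. Applying it to $\pc_{t-m-1}^{k}(x_{t-m-1})$ at predictive index $1$ shows that
\[
    y_{m+1}' \;=\; \pc_{t-m-1}^{k}(x_{t-m-1})_{y_{m+2}} \;=\; \pc_{t-m}^{k-1}\bigl(y_{1}^{(t-m-1)}\bigr)_{y_{m+1}},
\]
where $y_{1}^{(t-m-1)} := \pc_{t-m-1}^{k}(x_{t-m-1})_{y_{1}}$ is the one-step prediction. With this identity, both $y_{m+1}$ and $y_{m+1}'$ become solutions of PC problems at the same time $t-m$ but with different horizons ($k$ vs $k-1$) and different initial conditions ($x_{t-m}$ vs $y_{1}^{(t-m-1)}$). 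Inserting the intermediate $\pc_{t-m}^{k-1}(x_{t-m})_{y_{m+1}}$ via the triangle inequality then splits the estimate into an \emph{initial-condition mismatch} and a \emph{horizon mismatch}.

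\textbf{Initial-condition mismatch.} Corollary~\ref{cor:lpsz} immediately bounds $\lVert \pc_{t-m}^{k-1}(x_{t-m})_{y_{m+1}} - \pc_{t-m}^{k-1}(y_{1}^{(t-m-1)})_{y_{m+1}}\rVert$ by $\alpha\rho^{m+1}\lVert x_{t-m}-y_{1}^{(t-m-1)}\rVert$. Because $x_{t-m}$ and $y_{1}^{(t-m-1)}$ are both obtained from $x_{t-m-1}$ via the dynamics with the same disturbance but different controls---the DTPC action $u_{t-m-1}$ versus the centralized action $v_{0}^{(t-m-1)}$---this discrepancy equals $B(u_{t-m-1}-v_{0}^{(t-m-1)})$. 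Theorem~\ref{thm:diffdec} applied node-wise and aggregated across the $N$ nodes yields $\lVert u_{t-m-1}-v_{0}^{(t-m-1)}\rVert \leq N\Gamma(\lVert x_{t-m-1}\rVert+D_{k})\delta_{\mathcal{S}}^{\kappa}$; combining with $\lVert B\rVert\leq L$ and $\rho^{m+1}\leq\delta^{m+1}$ produces the first term $CL_{N}(\lVert x_{t-m-1}\rVert+D_{k})\delta^{\kappa+m+1}$ of \eqref{eq:diffhorizon} after absorbing constants.

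\textbf{Horizon mismatch.} For $\lVert \pc_{t-m}^{k}(x_{t-m})_{y_{m+1}} - \pc_{t-m}^{k-1}(x_{t-m})_{y_{m+1}}\rVert$, I invoke the principle of optimality a second time. Fixing the predictive state at index $k-1$ to equal $y^{A}_{k-1}:=\pc_{t-m}^{k}(x_{t-m})_{y_{k-1}}$ in the first problem and $y^{B}_{k-1}:=\pc_{t-m}^{k-1}(x_{t-m})_{y_{k-1}}$ in the second, both sub-trajectories coincide with the common terminal-state problem $\pct_{t-m}^{k-1}(x_{t-m},w,\cdot)$ with the respective terminals. Lemma~\ref{lem:termlpz} then yields the bound $\Omega\,\delta_{\mathcal{T}}^{k-m-2}\lVert y^{A}_{k-1}-y^{B}_{k-1}\rVert$, and Corollary~\ref{cor:lpsz} applied to each PC problem (against the zero reference) controls $\lVert y^{A}_{k-1}\rVert$ and $\lVert y^{B}_{k-1}\rVert$ in terms of $\delta^{k-1}\lVert x_{t-m}\rVert$ and $D_{k}$. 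The explicit $\lVert x_{t-m-1}\rVert$ contribution in \eqref{eq:diffhorizon} then arises from a subsequent estimate relating $\lVert x_{t-m}\rVert$ to $\lVert x_{t-m-1}\rVert$ through the DTPC dynamics, using Cor.~\ref{cor:lpsz} to bound $\lVert v_{0}^{(t-m-1)}\rVert$ and Theorem~\ref{thm:diffdec} to bound $\lVert u_{t-m-1}-v_{0}^{(t-m-1)}\rVert$.

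\textbf{Main obstacle.} The crux is the horizon-mismatch step: no off-the-shelf Lipschitz result directly compares $\pc^{k}$ with $\pc^{k-1}$, since their objectives differ in where the terminal cost $F$ is applied. The central trick is the principle-of-optimality reduction to a common $\pct^{k-1}$ problem with differing---but individually bounded---terminal states, which is exactly the setting in which Lemma~\ref{lem:termlpz} produces a temporal decay $\delta_{\mathcal{T}}^{k-m-2}$. The remaining work is careful bookkeeping: aggregating the per-node spatial decay of Theorem~\ref{thm:diffdec} across all $N$ nodes into the factor $L_{N}=CLN$, and uniformizing the decay constants $\rho\leq\delta_{\mathcal{S}}$ and $\delta_{\mathcal{T}}$ into a single $\delta=\max(\delta_{\mathcal{S}},\delta_{\mathcal{T}})$.
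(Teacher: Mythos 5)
Your proposal is correct and follows essentially the same route as the paper: a principle-of-optimality rewrite of $y_{m+1}'$ as $\pc_{t-m}^{k-1}(x_{t-m}^c)_{y_{m+1}}$, a reduction of both problems to the common terminal-state problem $\pct_{t-m}^{k-1}$ so that Lemma~\ref{lem:termlpz} supplies the $\delta^{k-m-2}$ factor, the bound \eqref{eq:cntrlbnd} (via Theorem~\ref{thm:diffdec}) for $\lVert x_{t-m}-x_{t-m}^c\rVert$, and Corollary~\ref{cor:lpsz} to control the terminal states. The only cosmetic difference is that you insert $\pc_{t-m}^{k-1}(x_{t-m})$ by triangle inequality to separate the initial-condition and horizon mismatches, whereas the paper absorbs both into a single application of Lemma~\ref{lem:termlpz} (which simultaneously handles the changed initial condition and changed terminal state), yielding the $C\delta\lVert x_{t-m-1}\rVert$ term directly from bounding $\lVert x_{t-m}^c\rVert$.
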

The proof is in \Cref{subsec:app_2}, 
from which we acquire a few auxiliary results. First, by the principle of optimality, we have \begin{equation}\label{eq:pop}\pc_{t-m-1}^k(x_{t-m-1})_{y_{m+2}} = \pc_{t-m}^{k-1}(x_{t-m}^c)_{y_{m+1}},
\end{equation}
where $x_{t-m}^c := \pc_{t-m-1}^k(x_{t-m-1})_{y_1}$. Next, we also have
\begin{equation}\label{eq:cntrlbnd}
    \lVert x_{t-m} - x_{t-m}^c\rVert \leq L_N(\lVert x_{t-m-1}\rVert + D_k)\delta^\kappa,
\end{equation}
which follows directly from rewriting the norm as
\begin{equation}\label{eq:ctbnd1}\lVert x_{t-m} - x_{t-m}^c\rVert = \lVert B(u_{t-m-1} - \pc_{t-m-1}^k(x_{t-m-1})_{v_0})\rVert, \end{equation}
where $u_{t-m-1} := (\dpc_{t-m-1}^{k}(x_{t-m-1}, \knbhd[i])_{v_0[i]})_{i \in \mathcal{V}}$, and then applying Theorem \ref{thm:diffdec} to get the upper bound in \eqref{eq:cntrlbnd}.
Now we may prove Theorem \ref{thm:statenorm}.
\begin{proof}[Proof of Theorem~\ref{thm:statenorm}]  We only prove the case for $k \leq t \leq T-k-1$, and the proofs for $t \leq k-1$ and $t \geq T-k$ are similar.
    We start by comparing the norm of $\lVert x_{t+1}\rVert$ as {\footnotesize
    $$\begin{aligned}&\lVert x_{t+1}\rVert = \left\lVert \pc_t^k(x_t)_{y_1} - (x_{t+1}^c - x_{t+1})\right\rVert \\
    &\leq \sum_{m = 0}^{k-2}\lVert \pc_{t-m}^k(x_{t-m})_{y_{m+1}}-\pc_{t-m-1}^k(x_{t-m-1})_{y_{m+2}}\rVert \\ &+ \lVert \pc_{t-k+1}^k(x_{t-k+1})_{y_k}\rVert + L_N(\lVert x_t\rVert + D_k)\delta^\kappa \\
    &\leq \sum_{m=0}^{k-2} \bigg(CL_N(\lVert x_{t-m-1}\rVert + D_k)\delta^{\kappa+m+1} \\ &+ C^2\delta^{k-m-2}\left(\delta^{k-1}\Big(\lVert x_{t-m}\rVert + C\delta\lVert x_{t-m-1}\rVert\Big) + \frac{6C}{1-\delta}D_k\right) \bigg)\\
    &+ C\delta^{k}\lVert x_{t-k+1}\rVert  + \frac{2C}{1-\delta}D_k + L_N(\lVert x_t\rVert + D_k)\delta^\kappa \\
    &\leq C\sum_{m=0}^{k-1}\left(L_N\delta^{\kappa+m} + 2C^2\delta^{2k-m-3}\right)\lVert x_{t-m}\rVert  + W.
    \end{aligned}$$ }The first equality uses the definition $ x_{t+1}^c = \pc_t^k(x_t)_{y_1}$. In the first inequality, we telescope $\pc_t^k(x_t)_{y_1}$ and apply \eqref{eq:cntrlbnd}. In the second inequality, we apply Lemma \ref{lem:diffhorizon} and Corollary \ref{cor:lpsz} with $x' = 0$ and $\zeta' = 0$. In the final inequality, we combine terms and upper bound by the geometric sum to attain the desired bound. 
\end{proof}

\subsection{Proof of Step 3 (Regret)} 
To prove the regret result in Theorem \ref{thm:perf_guarantees} we first prove the following critical result which bounds the difference between $DTPC_k$'s trajectory and the optimal offline trajectory: 
\begin{theorem}
    \label{thm:statediffbnd}
    Let $x_{0:T}$ be the trajectory generated by $DTPC_k$ and $x^*_{0:T}$ is the optimal offline trajectory. For $t+1 \leq T-k$, we have that {\small
    \begin{align}\label{eq:statediffbndTk} &\lVert x_{t+1} - x_{t+1}^*\rVert \leq C\delta^{k}\left(2C\delta^T\lVert x_0\rVert + \frac{4C}{1-\delta}D_k\right) +\nonumber \\&\sum_{m=0}^{t}\frac{4C^2L_N}{\delta(1-\delta)^2}\delta^{m}\left(\left(\delta^\kappa + \delta^{2k}\right)\lVert x_{t-m}\rVert + (\delta^\kappa + \delta^k)D_k\right), \end{align} }
    and for $t+1 \geq T-k+1$, we have that {\small 
    \begin{align}\label{eq:statediffbndT}
        &\lVert x_{t+1} - x_{t+1}^* \rVert \leq \sum_{m=0}^{t}CL_N(\lVert x_{t-m}\rVert+D_k)\delta^{\kappa+m} + \nonumber \\ & \sum_{m=1}^{T-k}\frac{4C^3}{\delta(1-\delta)^2}\delta^{t+k-T+m}\left(\delta^{2k}\lVert x_{T-k-m}\rVert + \delta^kD_k\right),
    \end{align} }
\end{theorem}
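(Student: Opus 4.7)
\textbf{Proof proposal for Theorem \ref{thm:statediffbnd}.} The plan is to derive a one-step recursive inequality for $e_t := \lVert x_t - x^*_t\rVert$ and then unroll it, isolating three sources of per-step error: (i) the spatial truncation incurred by $DTPC_k$ versus centralized $PC_k$; (ii) the Lipschitz dependence of $PC_k$ on its initial state; and (iii) the mismatch between the horizon-$k$ problem with terminal cost $F$ (used by $DTPC_k$) and the horizon-$(T-t)$ problem with terminal cost $f_T$ (the principle-of-optimality restriction of the offline optimum).

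First I would introduce the auxiliary state $\tilde{x}_{t+1} := \pc_t^k(x_t; F)_{y_1}$. By Theorem~\ref{thm:diffdec} applied to the primal variable $v_0$ at each node, combined with the dynamics and $\lVert B\rVert\leq L$, one obtains the spatial bound $\lVert x_{t+1}-\tilde{x}_{t+1}\rVert \leq L_N(\lVert x_t\rVert+D_k)\delta_{\calS}^{\kappa}$, exactly as in equation \eqref{eq:cntrlbnd} from the proof of Theorem~\ref{thm:statenorm}. Next, using the principle of optimality $x^*_{t+1} = \pc_t^{T-t}(x^*_t; f_T)_{y_1}$, I would split
\[
\tilde{x}_{t+1} - x^*_{t+1} = \bigl[\pc_t^k(x_t;F)_{y_1} - \pc_t^k(x^*_t;F)_{y_1}\bigr] + \bigl[\pc_t^k(x^*_t;F)_{y_1} - \pc_t^{T-t}(x^*_t;f_T)_{y_1}\bigr].
\]
The first bracket is controlled directly by Corollary~\ref{cor:lpsz}, yielding a $C\delta_{\calS}\lVert x_t - x^*_t\rVert$ contribution. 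For the second bracket, I would rewrite \emph{both} predictive control problems as terminal-state-constrained problems $\pct_t^k$ on the common horizon $k$ by fixing each one's own optimal $k$-th predicted state as the terminal constraint (principle of optimality). Applying Lemma~\ref{lem:termlpz} with $m=1$ then yields a factor $\Omega\,\delta_{\calT}^{k-1}\lVert \bar{x}^F - \bar{x}^*\rVert$, where $\bar{x}^F := \pc_t^k(x^*_t;F)_{y_k}$ and $\bar{x}^* := \pc_t^{T-t}(x^*_t;f_T)_{y_k}$. Each of these terminal states is itself bounded by $\lesssim \delta^k \lVert x^*_t\rVert + \tfrac{C}{1-\delta} D_k$ via a stability estimate on the predicted trajectory using Corollary~\ref{cor:lpsz} with $x'=0,\zeta'=0$; it is precisely this extra $\delta^k$ factor that produces the critical $\delta^{2k}$ coefficient in the stated bound rather than a weaker $\delta^k$.

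Combining the three pieces gives the recursion $e_{t+1} \leq C\delta\, e_t + \alpha_t$, where $\alpha_t$ collects the spatial-truncation term $\lesssim \delta^{\kappa}(\lVert x_t\rVert+D_k)$ and the horizon-mismatch term $\lesssim \delta^{2k-1}\lVert x^*_t\rVert + \delta^{k-1}D_k$. Unrolling with $e_0 = 0$ yields $e_{t+1} \leq \sum_{m=0}^{t}(C\delta)^m \alpha_{t-m}$. Substituting the ISS bound $\lVert x^*_s\rVert \lesssim C\delta^{s}\lVert x_0\rVert + \tfrac{C}{1-\delta}D_k$ on the offline optimal (which itself follows from Corollary~\ref{cor:lpsz}) and summing the two geometric series produces exactly the first displayed bound in the theorem, with the $\delta^T\lVert x_0\rVert$ contribution arising from the tail of the geometric series applied to $\lVert x^*_{t-m}\rVert$ at the oldest time indices.

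For the end-game regime $t+1\geq T-k+1$, $DTPC_k$ solves $\dpc_t^{T-t}(\cdot;f_T)$, whose horizon and terminal cost already coincide with those of the offline optimum, so the horizon/terminal-cost mismatch in Step~2 \emph{vanishes} and only the spatial-truncation error at each step remains. Telescoping the same recursion from time $t$ back to $T-k$ and invoking the first displayed bound as the ``initial condition'' of this second telescoping produces the second displayed inequality. The main obstacle throughout is the horizon/terminal-cost mismatch in Step~2, for which Lemma~\ref{lem:termlpz} does not directly apply to $\pc_t^k$ versus $\pc_t^{T-t}$ because of the different horizons and terminal costs; the key trick is the reformulation as $\pct_t^k$ with appropriately chosen terminal states extracted by the principle of optimality, paired with a sharp magnitude estimate on those terminal states so that the stabilizing $\delta^k$ factor compounds with the Lipschitz factor $\delta^{k-1}$.
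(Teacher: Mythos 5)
Your decomposition identifies the right three error sources (spatial truncation, Lipschitz dependence on the initial state, and horizon/terminal-cost mismatch), and the individual ingredients you invoke --- equation \eqref{eq:cntrlbnd}, Corollary \ref{cor:lpsz}, the principle of optimality, and a $\delta^{2k}$-type bound on the horizon mismatch analogous to \eqref{eq:Tminustbnd} --- all appear in the paper's argument. However, the way you assemble them contains a genuine gap: you form the one-step recursion $e_{t+1} \leq C\delta\, e_t + \alpha_t$ and unroll it, which yields $e_{t+1} \leq \sum_{m=0}^{t}(C\delta)^m\alpha_{t-m}$, i.e., the Lipschitz constant $C$ is compounded geometrically. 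Nothing in the assumptions makes $C\delta < 1$: here $C \geq \Gamma$ involves $\alpha^2$ with $\alpha = L_H/(\mu_H^2\rho)$ and is generically much larger than $1$, while $\delta \geq \delta_{\mathcal{S}} = (\rho+1)/2$ can be close to $1$; and unlike the ISS recursion, the coefficient $C\delta$ carries no $\delta^{\kappa}$ or $\delta^{k}$ gain that the choice of $k,\kappa$ in Theorem \ref{thm:perf_guarantees} could exploit. So your unrolled bound blows up as $(C\delta)^t$ and cannot produce the $\frac{4C^2L_N}{\delta(1-\delta)^2}\delta^m$ coefficients in \eqref{eq:statediffbndTk}, which contain only a fixed power of $C$.

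The paper avoids this by never recursing on $\lVert x_t - x_t^*\rVert$. It first splits off $\lVert \hat{x}_{t+1}-x_{t+1}^*\rVert$ where $\hat{x}$ is the trajectory of $\pc_0^T(x_0;F)$ (this comparison, between two full-horizon problems differing only in terminal cost, gives the constant $C\delta^k(2C\delta^T\lVert x_0\rVert + \tfrac{4C}{1-\delta}D_k)$ term via equation (21) of \cite{Lin2021perturb}), and then bounds $\lVert x_{t+1}-\hat{x}_{t+1}\rVert$ by telescoping the $m$-steps-ahead predictions $\pc_{t-m}^{T-t+m}(x_{t-m})_{y_{m+1}}$ along the $DTPC_k$ trajectory itself. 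By the principle of optimality each consecutive difference equals $\lVert\pc_{t-m}^{T-t+m}(x_{t-m})_{y_{m+1}} - \pc_{t-m}^{T-t+m}(x_{t-m}^{c'})_{y_{m+1}}\rVert$, and Corollary \ref{cor:lpsz} applied to the $(m{+}1)$-th predicted component gives $C\delta^{m+1}\lVert x_{t-m}-x_{t-m}^{c'}\rVert$ --- a \emph{single} factor of $C$ together with the decay $\delta^{m+1}$, with the one-step error then split into the $\delta^{\kappa}$ truncation piece \eqref{eq:cntrlbnd} and the $\delta^{2k}$ horizon piece \eqref{eq:Tminustbnd}. This is also why the theorem's sum contains $\lVert x_{t-m}\rVert$ (the $DTPC_k$ states) rather than the $\lVert x^*_{t-m}\rVert$ or $\delta^{t-m}\lVert x_0\rVert$ terms your substitution would produce. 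To repair your argument you would need to replace the recursion with this prediction-telescoping device (or otherwise establish a genuine one-step contraction of the closed loop), since the rest of your outline, including the treatment of the end-game regime $t+1 \geq T-k+1$, is sound.
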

 \begin{proof} 
 \textit{For $t+1 \leq T-k$:} Let $\hat{x}_{0:T}$ denote the state trajectory of $\pc_0^T(x_0;F)$. Then we can upper bound the norm of the difference as
$$\lVert x_{t+1} - x_{t+1}^*\rVert \leq \lVert x_{t+1} - \hat{x}_{t+1}\rVert + \lVert \hat{x}_{t+1} - x_{t+1}^*\rVert.$$ By Lemma \ref{lem:termlpz} and Corollary \ref{cor:lpsz} and for $t+1 \leq T-k$, we have the upper bound from equation (21) in \cite{Lin2021perturb}
\begin{align}\label{eq:compbnd} &\lVert\hat{x}_{t+1}-x_{t+1}^*\rVert \leq C\delta^k\left(2C\delta^T\lVert x_0\rVert + \frac{4C}{1-\delta}D_k\right).\end{align}
Then we make the observation that 
\begin{equation}\label{eq:obs_pop}\pc_{t-m-1}^{T-t+m+1}(x_{t-m-1})_{y_{m+2}} = \pc_{t-m}^{T-t+m}(x_{t-m}^{c'})_{y_{m+1}},\end{equation}
where $x_{t-m}^{c'} := \pc_{t-m-1}^{T-t+m+1}(x_{t-m-1})_{y_1}$. Thus we have,
{\footnotesize
$$\begin{aligned}
    &\lVert x_{t+1} - \hat{x}_{t+1}\rVert = \lVert x_{t+1} - \pc_0^T(x_0;F)_{y_{t+1}}\rVert \\ 
&\leq \lVert x_{t+1} - \pc_{t}^{T-t}(x_t)_{y_1}\rVert \\ &+ \sum_{m=0}^{t-1}\lVert \pc_{t-m}^{T-t+m}(x_{t-m})_{y_{m+1}} - \pc_{t-m-1}^{T-t+m+1}(x_{t-m-1})_{y_{m+2}} \rVert \\ 
&\leq \lVert x_{t+1} - x_{t+1}^{c'}\rVert + \sum_{m=0}^{t-1}C\delta^{m+1}\lVert x_{t-m} - x_{t-m}^{c'}\rVert  \\
&\leq \lVert x_{t+1} - x_{t+1}^c\rVert + \lVert x_{t+1}^c - x_{t+1}^{c'}\rVert \\ &+ 
\sum_{m=0}^{t-1}C\delta^{m+1}\left(\lVert x_{t-m}-x_{t-m}^{c}\rVert + \lVert x_{t-m}^c - x_{t-m}^{c'}\rVert\right), 
\end{aligned}$$ }where we've applied the observation from above \eqref{eq:obs_pop} and Corollary \ref{cor:lpsz} in the second inequality.
 We then apply the bound in \eqref{eq:cntrlbnd} and the bound
\begin{equation}\label{eq:Tminustbnd}\lVert \pc_t^k(x_t)_{y_1} - \pc_t^{T-t}(x_t)_{y_1}\rVert \leq \frac{4C^2}{\delta(1-\delta)^2}(\delta^{2k}\lVert x_t\rVert + \delta^kD_k),\end{equation}
(Equation 19 in \cite{Lin2021perturb}) which holds for $t +1 \leq T-k$. This completes the proof for $t+1 \leq T-k$. The case for $t+1 \geq T-k+1$ is similar so its proof is omitted here. 
\end{proof} 

Before finishing the proof of Theorem \ref{thm:perf_guarantees}, we require the following two Lemmas. First, we have from \cite{Lin2021perturb}:
\begin{lemma}[Lemma F.2 in \cite{Lin2021perturb}]\label{lem:etacost}
    Suppose $h:\mathbb{R}^{n} \mapsto\mathbb{R}_+$ is a convex and $L$-smooth continuously differentiable function. Then for $x,x'\in\mathbb{R}^{n}$ and $\eta > 0$, we have that
    $$h(x) - (1+\eta)h(x') \leq \frac{L}{2}\left(1 + \frac{1}{\eta}\right)\lVert x-x'\rVert^2.$$
\end{lemma}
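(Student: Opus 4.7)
The plan is to combine the standard $L$-smoothness descent inequality with the \emph{self-bounded gradient} property enjoyed by any nonnegative $L$-smooth function, and then absorb the resulting linear cross term via Young's inequality with a weight tuned to $\eta$. The key observation is that one should not attempt to bound $h(x)$ and $(1+\eta)h(x')$ separately; rather, write $(1+\eta)h(x') = h(x') + \eta h(x')$ so that the surplus $-\eta h(x')$ piece is available to cancel the gradient-linear contribution that the descent lemma produces.

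First I would invoke $L$-smoothness of $h$ to obtain
$$h(x) \leq h(x') + \langle \nabla h(x'), x - x' \rangle + \frac{L}{2}\lVert x - x'\rVert^2,$$
which after rearrangement gives
$$h(x) - (1+\eta)h(x') \leq -\eta h(x') + \langle \nabla h(x'), x - x'\rangle + \frac{L}{2}\lVert x - x'\rVert^2.$$
Next, since $h \ge 0$ on all of $\mathbb{R}^n$, I would derive the self-bounded gradient inequality $\lVert \nabla h(x')\rVert^2 \leq 2L\, h(x')$ by applying the descent lemma at the Armijo-step point $y = x' - \tfrac{1}{L}\nabla h(x')$: this yields $h(y) \leq h(x') - \tfrac{1}{2L}\lVert \nabla h(x')\rVert^2$, and nonnegativity $h(y) \ge 0$ gives the claim. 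Notice that convexity is not needed for this step; only nonnegativity and $L$-smoothness enter.

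With the self-bounded gradient bound in hand, I would control the cross term by Cauchy--Schwarz followed by Young's inequality $ab \leq \tfrac{a^2}{2c} + \tfrac{cb^2}{2}$ with the choice $c = L/\eta$:
$$\langle \nabla h(x'), x - x'\rangle \leq \sqrt{2L\, h(x')}\,\lVert x - x'\rVert \leq \eta\, h(x') + \frac{L}{2\eta}\lVert x - x'\rVert^2.$$
Substituting back into the earlier inequality cancels $-\eta h(x')$ exactly and collapses the two quadratic contributions into $\tfrac{L}{2}\bigl(1 + \tfrac{1}{\eta}\bigr)\lVert x - x'\rVert^2$, which is the desired bound.

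The only mild obstacle is spotting the weight $c = L/\eta$ in Young's inequality so that the $h(x')$ coefficient produced by the self-bounded gradient step lines up precisely with the $\eta h(x')$ surplus term and cancels it; once this alignment is recognized, the rest of the argument is essentially bookkeeping. It is also worth noting that convexity of $h$, although part of the hypothesis, plays no role in the proof — the lemma is really a statement about nonnegative $L$-smooth functions.
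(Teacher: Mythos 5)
Your proof is correct. The paper itself gives no proof of this lemma --- it is imported verbatim as Lemma F.2 of the cited reference --- and your argument (descent lemma, the self-bounded gradient inequality $\lVert \nabla h(x')\rVert^2 \leq 2L\,h(x')$ for nonnegative $L$-smooth functions, then Young's inequality with weight $L/\eta$ to cancel the $\eta h(x')$ surplus) is essentially the standard derivation used in that source. Your side remark that convexity is not actually needed, only nonnegativity and $L$-smoothness, is also accurate.
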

\noindent The above Lemma \ref{lem:etacost} will be applied to both $f_t$ and $c_t$. 

In addition, we require the following bound on the one step terminal state problem ($k = 1$) in \eqref{eq:term_mpc}, the proof is in \Cref{subsec:app_3}.
\begin{lemma}\label{lem:onestepterm}
    Let $v$ and $v'$ be the two ``one-step'' terminal state problems: $\pct_t^1(x_t,x_{t+1})_{v_0}$ and $\pct_t^1(x_t',x_{t+1}')_{v_0}$. Under Assumptions \ref{assump:sys} and \ref{assump:costs}, we have that 
    \begin{equation}\label{eq:onestepterm}
        \lVert v-v'\rVert^2 \leq C^2\left(\lVert x_t-x_t'\rVert^2  + \lVert x_{t+1}-x_{t+1}'\rVert^2\right), 
    \end{equation}
    where $C$ is as in Theorem \ref{thm:statenorm}.
\end{lemma}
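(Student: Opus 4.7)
The plan is to reduce the one-step terminal-state program to a single constrained minimization over $v_0$ and then run a KKT-based sensitivity analysis in the same spirit as Theorem \ref{thm:diffdec}. Because both endpoints are pinned by $y_0 = x_t$ and $y_1 = x_{t+1}$, the program $\pct_t^1(x_t, x_{t+1})$ collapses to
\begin{equation*}
    \min_{v_0}\; c_{t+1}(v_0) \quad \text{s.t.} \quad B v_0 = x_{t+1} - A x_t - w_t,
\end{equation*}
with Lagrangian $\mathcal{L}(v_0,\lambda) = c_{t+1}(v_0) + \lambda^\top(B v_0 - (x_{t+1} - A x_t - w_t))$, so the KKT system for the optimizer $(v,\lambda)$ reads $\nabla c_{t+1}(v) + B^\top \lambda = 0$ and $Bv = x_{t+1} - Ax_t - w_t$.

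Next I would write the analogous KKT system for $(v',\lambda')$ with primed inputs, subtract, and invoke Lemma \ref{lem:grad2mat} to substitute $\nabla c_{t+1}(v) - \nabla c_{t+1}(v') = G(v,v')(v - v')$ with $\mu I \preceq G \preceq L I$. This produces the block system
\begin{equation*}
    \begin{bmatrix} G & B^\top \\ B & 0 \end{bmatrix}\begin{bmatrix} v - v' \\ \lambda - \lambda' \end{bmatrix} = \begin{bmatrix} 0 \\ (x_{t+1} - x_{t+1}') - A(x_t - x_t') \end{bmatrix}.
\end{equation*}
Block elimination produces the closed form $v - v' = G^{-1} B^\top (B G^{-1} B^\top)^{-1}\bigl[(x_{t+1} - x_{t+1}') - A(x_t - x_t')\bigr]$. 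Plugging in the operator-norm bounds from Assumption \ref{assump:sys} (namely $\lVert A\rVert, \lVert B\rVert \leq L$ and $\minsv(B) \geq 1/\lVert B^\dagger\rVert \geq 1/L$, which forces $\lVert (B G^{-1} B^\top)^{-1}\rVert \leq L/\minsv(B)^2 \leq L^3$) together with $\lVert G^{-1}\rVert \leq 1/\mu$ yields $\lVert v - v'\rVert \leq C_0 (\lVert x_t - x_t'\rVert + \lVert x_{t+1} - x_{t+1}'\rVert)$ for some $C_0 = C_0(L,\mu)$. Squaring and applying $(a+b)^2 \leq 2(a^2 + b^2)$ then gives \eqref{eq:onestepterm} with constant $2C_0^2$, which can be absorbed into $C = \max(\Omega,\Gamma)$ since both $\Omega$ and $\Gamma$ depend polynomially on $L$ and $\mu^{-1}$ and may be enlarged without loss.

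The main obstacle I anticipate is well-posedness of the reduction: the inverse $(BG^{-1}B^\top)^{-1}$ exists only when $B$ has full row rank, i.e.\ when the controllability index $d = 1$ in Definition \ref{def:cntrlidx}, so the lemma is really a statement about instances where the affine constraint $Bv_0 = x_{t+1} - Ax_t - w_t$ is feasible. In the intended downstream use inside the regret proof this is automatic, because the pairs $(x_t,x_{t+1})$ fed to the lemma come from actual feasible trajectories of \eqref{eq:LTIcent}, so consistency of the constraint is built in. Aside from this subtlety, the derivation is a routine sensitivity computation that reuses tools already established in the excerpt.
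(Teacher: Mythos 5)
Your reduction to the single constrained program $\min_{v_0} c_{t+1}(v_0)$ subject to $Bv_0 = x_{t+1}-Ax_t-w_t$ is exactly the paper's starting point, and the KKT-difference step via Lemma \ref{lem:grad2mat} is sound. The gap is in the block elimination: the closed form $v-v' = G^{-1}B^\top(BG^{-1}B^\top)^{-1}\Delta$ requires $BG^{-1}B^\top$ to be invertible, i.e.\ $B$ to have full row rank, and you then argue this is harmless because the constraint is feasible along actual trajectories. But feasibility and surjectivity are different conditions: feasibility only gives $x_{t+1}-Ax_t-w_t \in \mathrm{Col}(B)$ (and hence $\Delta \in \mathrm{Col}(B)$), whereas invertibility of $BG^{-1}B^\top$ needs $\mathrm{rank}(B)=n$. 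The paper's assumptions do not grant this --- Assumption \ref{assump:sys} only bounds $\lVert B^\dagger\rVert$ and asks that the reduced controllability matrix $\mathcal{C}^d$ be full row rank for some $d\leq n$, which permits $d>1$ and hence rank-deficient $B$. In that regime your formula is undefined (and $\lambda$ itself is non-unique), so the argument as written only proves the lemma in the special case $d=1$.

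The paper's own proof avoids this by decomposing $v = v_y + v_z$ with $v_y \in \mathrm{Col}(B^\top)$ and $v_z \in \mathrm{Null}(B)$: the range component is pinned down as $v_y = B^\dagger(x_{t+1}-Ax_t-w_t)$ using only feasibility and $\lVert B^\dagger\rVert \leq L$, while the null-space component solves an \emph{unconstrained} strongly convex problem in $\omega$ (with $v_z = B_z\omega$), whose optimality conditions compared via Lemma \ref{lem:grad2mat} give $\lVert \omega-\omega'\rVert \leq (L/\mu)\lVert v_y-v_y'\rVert$; no inverse of $BG^{-1}B^\top$ is ever needed. Your route can be repaired along similar lines --- note $\Delta \in \mathrm{Col}(B)$, solve for the component of $\lambda-\lambda'$ in $\mathrm{Col}(B)$ using the pseudo-inverse of $BG^{-1}B^\top$ restricted to that subspace, and lower-bound its smallest nonzero singular value by $\sigma_{\min,+}(B)^2/L \geq 1/L^3$ via $\lVert B^\dagger\rVert \leq L$ --- but as submitted the proof does not cover the generality the lemma is stated and used in. The constant bookkeeping (a polynomial in $L$ and $\mu^{-1}$ absorbed into $C$) matches the paper's treatment and is fine.
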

We are now ready to prove the regret in Theorem \ref{thm:perf_guarantees}.
\begin{proof}[Proof of Theorem \ref{thm:perf_guarantees}]
    First, denote $\bar{u}_t := \pct_t^1(x_t,x_{t+1})_{v_0}$. Then consider the difference of the costs $c_{t+1}$ which can be written as: 
    {{\footnotesize
     \begin{align}\label{eq:controlcostbound} &c_{t+1}(u_t) - (1+\eta)c_{t+1}(\bar{u}_t) + (1+\eta)\left(c_{t+1}(\bar{u}_t) - (1+\eta)c_{t+1}(u_t^*)\right) \nonumber \\
    &\leq \frac{L}{2}\left(1+\frac{1}{\eta}\right)\left(\lVert u_t - \bar{u}_t\rVert^2 +(1+\eta)\lVert \bar{u}_t - u^*_t)\rVert^2 \right) \nonumber \\
    &\leq C^3\left(1+\frac{1}{\eta}\right)\bigg(L_N^2(\lVert x_t\rVert + D_k)^2\delta^{2\kappa} + \lVert x_{t+1}-x_{t+1}^c\rVert^2  \nonumber 
 \\ &+ (1+\eta)\left(\lVert x_{t}-x_{t}^*\rVert^2  + \lVert x_{t+1}-x_{t+1}^*\rVert^2\right)\bigg) \nonumber \\
 &\leq 2C^3L_N^2\left(1+\frac{1}{\eta}\right)\bigg((\lVert x_{t} \rVert + D_k)^2\delta^{2\kappa} +  (1+\eta)(\lVert x_t-x_t^*\rVert^2 \nonumber  \\ &+ \lVert x_{t+1}-x_{t+1}^*\rVert^2)\bigg).\end{align} }}
 
 Where in the first inequality, we have applied Lemma \ref{lem:etacost}. To obtain the second inequality, by the parallelogram identity, we have the upper bound
 \begin{equation}\label{eq:split1}\lVert u_t - \bar{u}_t\rVert^2  \leq 2\left(\lVert u_t - \pc_t^{k'}(x_t)_{v_0}\rVert^2 + \lVert \pc_t^{k'}(x_t)_{v_0} - \bar{u}_t\rVert^2\right),\end{equation} 
 where $k' := \min(k, T-t)$. Applying Theorem \ref{thm:diffdec} to $\lVert u_t-\pc_t^{k'}(x_t)_{v_0}\rVert$ and Lemma \ref{lem:onestepterm} to $\lVert \pc_t^{k'}(x_t)_{v_0} - \bar{u}_t\rVert$ gives us the following upper bound of \eqref{eq:split1}: 
 \begin{align}
     2\left(L_N^2(\lVert x_t\rVert + D_k)^2\delta^{2\kappa} + C^2(\lVert x_{t+1} - x_{t+1}^c\rVert^2)\right). \label{eq:split2}
 \end{align}
Observe that
 $$\lVert \bar{u}_t - u_t^*\rVert = \lVert \pct_t^1(x_t,x_{t+1})_{v_0} - \pct_t^1(x_t^*, x_{t+1}^*)_{v_0}\rVert,$$ for which we can apply Lemma \ref{lem:onestepterm}. This, together with \eqref{eq:split2} gives us the second inequality in \eqref{eq:controlcostbound}. 
 Finally, we attain the last inequality by applying \eqref{eq:cntrlbnd} again. We take $\eta = \Theta(\max(\delta^k,\delta^\kappa))$. 
 Denoting $1+\eta' = (1+\eta)^2$, we have {\footnotesize
 \begin{align}\label{eq:etaregret}
     &\cost(DTPC_k) - (1+\eta')\cost(OPT) \nonumber \\
     &= \sum_{t=1}^{T}(f_{t}(x_{t}) - (1+\eta')f_{t}(x_{t}^*)) + (c_{t}(u_{t-1}) - (1+\eta')c_{t}(u_{t-1}^*)) \nonumber\\ 
    &\leq 2C^3L_N^2\left(1+\frac{1}{\eta}\right)\sum_{t=0}^{T-1}\bigg( \lVert x_{t+1}-x_{t+1}^*\rVert^2 +(\lVert x_t\rVert  + D_k)^2\delta^{2\kappa} \nonumber \\ &+ (1+\eta)\left(\lVert x_t-x_t^*\rVert^2 + \lVert x_{t+1} - x_{t+1}^*\rVert^2\right)\bigg) \nonumber \\
    &\leq 6C^3L_N^2\left(1+\frac{1}{\eta}\right)(1+\eta)\sum_{t=0}^{T}((\lVert x_t\rVert + D_k)^2\delta^{2\kappa} + \lVert x_t - x_t^*\rVert^2) \nonumber \\
    & = \frac{1}{\eta}O\bigg(\bigg[\left(D_k + \frac{\lVert x_0 \rVert + D_k}{\xi^2}\right)^2\delta^{2\kappa} \nonumber \\ &+  \left(D_k + \frac{\delta^{k}(\lVert x_0\rVert + D_k)}{\xi}\right)^2\delta^{2k}\bigg]T\bigg).
 \end{align} }
 
 Where in the first inequality we apply \eqref{eq:controlcostbound}, Lemma \ref{lem:etacost}, and that $\eta \leq \eta'$. In the second inequality, we merge all of the norms of differences. In the final line, we apply the ISS bound and Theorem \ref{thm:statediffbnd} to attain the final expression in \eqref{eq:etaregret} 
 
 Since $\eta' = 2\eta + \eta^2$, and hence, $\eta' = \Theta\left(\max(\delta^k,\delta^\kappa)\right)$, we simply add $\eta'\cost(OPT)$ to the RHS of \eqref{eq:etaregret} to achieve the bound in \eqref{eq:distREG}. This finishes the proof of Theorem \ref{thm:perf_guarantees}. 
 \end{proof}

\section{Simulations}
%\begin{example}\label{ex:HVAC}
    We consider the temperature control of a building with HVAC network graph that is a $5\times 5$ mesh grid with diameter $8$ and each of its $25$ nodes corresponding to a different zone in the building. The states are the temperatures of each zone and their respective temperature integrators and the control variable at zone $i$ is its manipulated heat generation/absorption. Further details about the system and setup are in \Cref{subsec:app_sim}.
    %{\small
    %$$\begin{aligned} \begin{bmatrix}
    %    \theta_{t+1} \\ \omega_{t+1} 
    %\end{bmatrix} &= \begin{bmatrix}
    %    I & (t_{s,P})I \\
    %    -(t_{s,P})L_{P} & I
    %\end{bmatrix}\begin{bmatrix}
    %    \theta_t \\ \omega_t
    %\end{bmatrix} + \begin{bmatrix}
    %    0 \\ 0.5(t_{s,P})I
    %\end{bmatrix}u_t,\end{aligned}$$} $L_{P}$ is the graph laplacian of $\mathcal{G}_{P}$. We take $f_t(x) := \frac{1}{2}x^\top Q x$ for $Q = I$ and $t = 0,...,T-1$, $F(x) := \frac{1}{2}x^\top Q_F x$ where $Q_F = 10I$, and $c_t(u) := \frac{1}{2}u^\top R_t u$ for $R_t := (1 + 5\mathrm{diag}(\lvert Z \rvert)) $ where $Z$ is a standard normal random vector, $\lvert \cdot \rvert$ is the absolute value applied entrywise and $\mathrm{diag}(\cdot)$ creates a diagonal matrix out of a vector. 
%\end{example}
%\textbf{Setup} We use graph and susceptance data from \cite{pena2017bus} and let $t_{S,P} = 0.005$ms and $(k_P)_{ij} = B_{ij}V_{ref}^2/M$ where $B_{ij}$ are the line susceptances, $V_{ref} = 132$kV, and $M = 10^{-5}$kgm. We simulate the dynamics via the discretization above and run $DTPC_k$ for $T = 50$ time steps. \guannan{This can go into github appendix. }
\begin{figure}
        \centering
        \includegraphics[height = 4cm, width = 8cm]{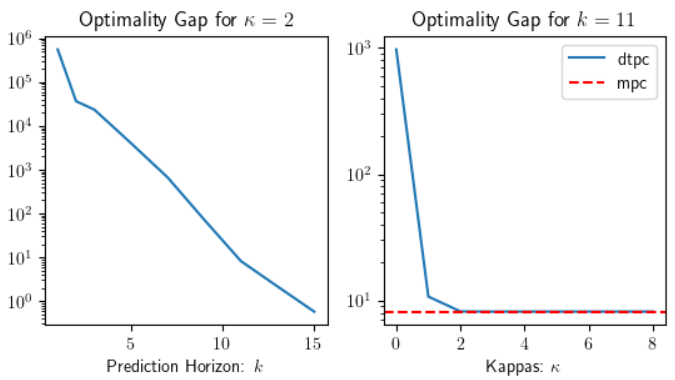}
        \caption{$DTPC_k$ versus the $OPT$ for fixed $\kappa$ (left) and fixed $k$ (right). }
        \label{fig:mesh}
    \end{figure}

\begin{figure}
    \centering
    \includegraphics[height = 4cm, width = 6cm]{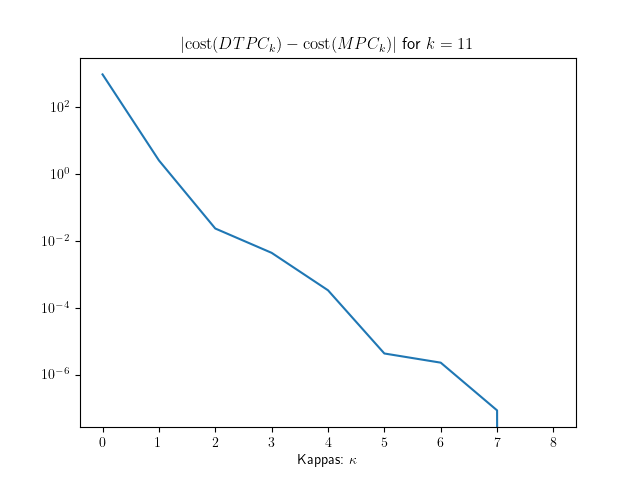}
    \caption{$DTPC_k$ versus $PC_k$ for $k = 11$ and varying $\kappa$.}
    \label{fig:DTPC_MPC}
\end{figure}

 The results are shown in Figure \ref{fig:mesh} where we chose a time horizon of $T = 30$. Observe that $DTPC_k$ exhibits the decaying regret behavior as in \eqref{eq:distREG} as $\kappa$ and $k$ increase. %We note that for the HVAC system, the bottleneck, in terms of obtaining better regret, appears to be the prediction horizon variable $k$ as shown in the left graph of Figure \ref{fig:mesh}. 
 Note that in Figure \ref{fig:mesh} (right), the regret stops decreasing because the prediction horizon $k$ becomes the bottleneck after $\kappa$ reaches $2$. Figure \ref{fig:DTPC_MPC} demonstrates our result in Theorem \ref{thm:diffdec}: when $\kappa$ increases under fixed $k$, $DTPC_k$ becomes exponentially close to $PC_k$.

 \noindent \textbf{Uncertainty Simulation}: 
 
 \begin{figure}[h]
    \centering
    \includegraphics[height = 5cm, width = 7cm]{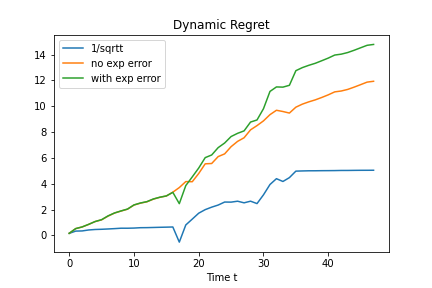}
    \caption{$uDTPC_k$ with different prediction errors for $k = 10$ (5 hours) and $\kappa = 3$.}
    \label{fig:uDTPC}
\end{figure}

 Here, we consider the same $5\times 5$ building system as before, but with forecasted PV generation and external temperatures. The details of the simulation are deferred to Appendix-E. 

 The results of our simulation are shown in Figure \ref{fig:uDTPC} with time horizon $T = 48$. In our simulation, we consider three different prediction errors. The first two prediction errors we consider are the aforementioned examples, $\phi^e$ and $\phi^c$ from equations \eqref{eq:sqrtt} and \eqref{eq:constexpdeg} which correspond to the dynamic regrets in blue and green, and the last prediction error we consider is constant such that
 \begin{equation}\label{eq:consterr}
     \phi_{t+n|t}^{const} \leq R,
 \end{equation}
 for some $R \geq 0$, which corresponds to the dynamic regret in orange. As expected, the growth of the dynamic regret for each example is bounded according to \eqref{eq:uncert_reg_bnd}. Worth noting is how little difference there is between the dynamic regrets obtained from performing $uDTPC_k$ under the predictions $\phi^c$ and $\phi^{const}$. The small difference between the two reflects how the prediction error term in \eqref{eq:uncert_reg_bnd} decays exponentially with look-ahead predictions further than the current step; in other words, good and accurate myopic predictions will furnish better performance than "decent" predictions over the whole prediction horizon.

\section{Conclusion}
In this work we have shown that our algorithm $DTPC_k$ produces trajectories that are similar to those produced by centralized predictive control $PC_k$. Furthermore, we extend $DTPC_k$ to the uncertain (oracle) predictions case and develop the $uDTPC_k$ algorithm for which we have shown stability and regret bounds for both $DTPC_k$ and $uDTPC_k$ algorithms which guarantee near-optimal performance. Additionally, we have shown that the performance of $uDTPC_k$ is heavily linked to the accuracy of the estimator; in particular, myopic predictions (those close to the current time step) exponentially matter more than those further in the future. In terms of future work, we would like to extend this analysis to the LTV dynamics case.

\bibliographystyle{ieeetr}
\bibliography{refs}

\begin{thebibliography}{10}

\bibitem{zhao2014design}
C.~Zhao, U.~Topcu, N.~Li, and S.~Low, ``Design and stability of load-side primary frequency control in power systems,'' {\em IEEE Transactions on Automatic Control}, vol.~59, no.~5, pp.~1177--1189, 2014.

\bibitem{sinopoli2003distributed}
B.~Sinopoli, C.~Sharp, L.~Schenato, S.~Schaffert, and S.~S. Sastry, ``Distributed control applications within sensor networks,'' {\em Proceedings of the IEEE}, vol.~91, no.~8, pp.~1235--1246, 2003.

\bibitem{varaiya2013max}
P.~Varaiya, ``Max pressure control of a network of signalized intersections,'' {\em Transportation Research Part C: Emerging Technologies}, vol.~36, pp.~177--195, 2013.

\bibitem{li2017dynamical}
S.~E. Li, Y.~Zheng, K.~Li, Y.~Wu, J.~K. Hedrick, F.~Gao, and H.~Zhang, ``Dynamical modeling and distributed control of connected and automated vehicles: Challenges and opportunities,'' {\em IEEE Intelligent Transportation Systems Magazine}, vol.~9, no.~3, pp.~46--58, 2017.

\bibitem{stewart_cooperative_2010}
B.~T. Stewart, A.~N. Venkat, J.~B. Rawlings, S.~J. Wright, and G.~Pannocchia, ``Cooperative distributed model predictive control,'' {\em Systems \& Control Letters}, vol.~59, pp.~460--469, Aug. 2010.

\bibitem{Jia2002}
D.~Jia and B.~Krogh, ``Min-max feedback model predictive control for distributed control with communication,'' in {\em Proceedings of the 2002 American Control Conference (IEEE Cat. No.CH37301)}, vol.~6, pp.~4507--4512 vol.6, 2002.

\bibitem{Alonso2023slsmpc}
C.~A. Alonso, J.~S. Li, J.~Anderson, and N.~Matni, ``Distributed and localized model-predictive control–part i: Synthesis and implementation,'' {\em IEEE Transactions on Control of Network Systems}, vol.~10, no.~2, pp.~1058--1068, 2023.

\bibitem{anderson2019system}
J.~Anderson, J.~C. Doyle, S.~H. Low, and N.~Matni, ``System level synthesis,'' {\em Annual Reviews in Control}, vol.~47, pp.~364--393, 2019.

\bibitem{fattahi2020efficient}
S.~Fattahi, N.~Matni, and S.~Sojoudi, ``Efficient learning of distributed linear-quadratic control policies,'' {\em SIAM Journal on Control and Optimization}, vol.~58, no.~5, pp.~2927--2951, 2020.

\bibitem{Ghai20222regmin}
U.~Ghai, U.~Madhushani, N.~Leonard, and E.~Hazan, ``A regret minimization approach to multi-agent control,'' in {\em Proceedings of the 39th International Conference on Machine Learning} (K.~Chaudhuri, S.~Jegelka, L.~Song, C.~Szepesvari, G.~Niu, and S.~Sabato, eds.), vol.~162 of {\em Proceedings of Machine Learning Research}, pp.~7422--7434, PMLR, 17--23 Jul 2022.

\bibitem{fazelnia2017}
G.~Fazelnia, R.~Madani, A.~Kalbat, and J.~Lavaei, ``Convex relaxation for optimal distributed control problems,'' {\em IEEE Transactions on Automatic Control}, vol.~62, no.~1, pp.~206--221, 2017.

\bibitem{bamieh2002distributed}
B.~Bamieh, F.~Paganini, and M.~A. Dahleh, ``Distributed control of spatially invariant systems,'' {\em IEEE Transactions on automatic control}, vol.~47, no.~7, pp.~1091--1107, 2002.

\bibitem{motee2008optimal}
N.~Motee and A.~Jadbabaie, ``Optimal control of spatially distributed systems,'' {\em IEEE Transactions on Automatic Control}, vol.~53, no.~7, pp.~1616--1629, 2008.

\bibitem{Shin2022sensitivity}
S.~Shin, M.~Anitescu, and V.~M. Zavala, ``Exponential decay of sensitivity in graph-structured nonlinear programs,'' {\em SIAM Journal on Optimization}, vol.~32, no.~2, pp.~1156--1183, 2022.

\bibitem{Sungho2023nearopt}
S.~Shin, Y.~Lin, G.~Qu, A.~Wierman, and M.~Anitescu, ``Near-optimal distributed linear-quadratic regulator for networked systems,'' {\em SIAM Journal on Control and Optimization}, vol.~61, no.~3, pp.~1113--1135, 2023.

\bibitem{Zhang2023polit}
Y.~Zhang, G.~Qu, P.~Xu, Y.~Lin, Z.~Chen, and A.~Wierman, ``Global convergence of localized policy iteration in networked multi-agent reinforcement learning,'' {\em Proc. ACM Meas. Anal. Comput. Syst.}, vol.~7, mar 2023.

\bibitem{MAYNE2000789}
D.~Mayne, J.~Rawlings, C.~Rao, and P.~Scokaert, ``Constrained model predictive control: Stability and optimality,'' {\em Automatica}, vol.~36, no.~6, pp.~789--814, 2000.

\bibitem{Bemporad1999}
A.~Bemporad and M.~Morari, ``Robust model predictive control: A survey,'' in {\em Robustness in identification and control} (A.~Garulli and A.~Tesi, eds.), (London), pp.~207--226, Springer London, 1999.

\bibitem{Yu2020predictions}
C.~Yu, G.~Shi, S.-J. Chung, Y.~Yue, and A.~Wierman, ``The power of predictions in online control,'' in {\em Proceedings of the 34th International Conference on Neural Information Processing Systems}, NIPS'20, (Red Hook, NY, USA), Curran Associates Inc., 2020.

\bibitem{Zhang2021onlinelqr}
R.~Zhang, Y.~Li, and N.~Li, ``On the regret analysis of online lqr control with predictions,'' in {\em 2021 American Control Conference (ACC)}, pp.~697--703, 2021.

\bibitem{Lin2021perturb}
Y.~Lin, Y.~Hu, G.~Shi, H.~Sun, G.~Qu, and A.~Wierman, ``Perturbation-based regret analysis of predictive control in linear time varying systems,'' in {\em Advances in Neural Information Processing Systems} (M.~Ranzato, A.~Beygelzimer, Y.~Dauphin, P.~Liang, and J.~W. Vaughan, eds.), vol.~34, pp.~5174--5185, Curran Associates, Inc., 2021.

\bibitem{Lin2022bounded}
Y.~Lin, Y.~Hu, G.~Qu, T.~Li, and A.~Wierman, ``Bounded-regret mpc via perturbation analysis: Prediction error, constraints, and nonlinearity,'' in {\em Advances in Neural Information Processing Systems} (S.~Koyejo, S.~Mohamed, A.~Agarwal, D.~Belgrave, K.~Cho, and A.~Oh, eds.), vol.~35, pp.~36174--36187, Curran Associates, Inc., 2022.

\bibitem{rawlings2017model}
J.~Rawlings, D.~Mayne, and M.~Diehl, {\em Model Predictive Control: Theory, Computation, and Design}.
\newblock Nob Hill Publishing, 2017.

\bibitem{ManiaCELQC2019}
H.~Mania, S.~Tu, and B.~Recht, ``Certainty equivalence is efficient for linear quadratic control,'' in {\em Advances in Neural Information Processing Systems} (H.~Wallach, H.~Larochelle, A.~Beygelzimer, F.~d\textquotesingle Alch\'{e}-Buc, E.~Fox, and R.~Garnett, eds.), vol.~32, Curran Associates, Inc., 2019.

\bibitem{QuPDGD2019}
G.~Qu and N.~Li, ``On the exponential stability of primal-dual gradient dynamics,'' {\em IEEE Control Systems Letters}, vol.~3, no.~1, pp.~43--48, 2019.

\end{thebibliography}

\fvtest{

\appendix
\subsection{Proofs of intermediary results in Step 1}\label{subsec:app_1}
\subsubsection{Proof of Lemma \ref{lem:spatdec}}
We note that the proof is a combination of Theorem 3.6 in \cite{Shin2022sensitivity} and Theorems A.3 and A.4 in \cite{Sungho2023nearopt}. For completeness, we provide a proof below with a focus on the differences from the proofs of those Theorems; the steps that are identical are omitted. 

The proof for the exponential decay result in equation \eqref{eq:spatialexpdecay} is available in \cite{Shin2022sensitivity} (Theorem 3.6)  which requires upper and lower bounds on the singular values of the $H$ matrix: i.e., $\mu_H \leq \sigma(H) \leq L_H$. 
So we will prove the bounds on the singular values similar to Theorems A.3 and A.4 in \cite{Sungho2023nearopt}. 

 Let $Z$ be an orthonormal null-space basis matrix for $J$, i.e. $\mathrm{Col}(Z) = \mathrm{Null}(J)$, then, we shall prove the following uniform regularity conditions:
\begin{equation}\label{eq:unifreg}
    H \preceq L_HI, \quad Z^\top G Z \succeq \mu I, \quad JJ^\top \succeq \mu_J I,
\end{equation}

\textit{Proof of $H \preceq L_H I$:} Let $H_{\tau,\tau'}$ denote the submatrix of $H$ such that the rows correspond to $(y_\tau,v_\tau,\lambda_\tau)$ and the columns correspond to $(y_{\tau'}, v_{\tau'}, \lambda_{\tau'})$, then we have that 
\begin{equation}\label{eq:H_submats}H_{\tau,\tau'} = \begin{cases}
    \begin{bmatrix}
        G_{y_\tau} & & I \\
        & G_{v_{\tau}}  \\
        I
    \end{bmatrix}, & \tau=\tau' \neq \ell, \\
    \begin{bmatrix}
        & & 0 \\
        & & 0 \\
        -A & -B
    \end{bmatrix}, & \tau = \tau'+1 \neq \ell \\
    \begin{bmatrix}
        G_{y_\ell} & I \\ I
    \end{bmatrix}, & \tau = \tau' = \ell \\
    \begin{bmatrix}
        & & 0 \\
        -A & -B
    \end{bmatrix}, & \tau = \tau'+1 = \ell \\
    0, & \tau-\tau' > 1
\end{cases},\end{equation} 
where each $G_{(\cdot)}$ comes from defining the block diagonal matrices of $G$ such that 
$$G := \begin{bmatrix}
    G_{y_0} \\
    & G_{v_0} \\
    & & \ddots \\
    & & & G_{v_{\ell-1}} \\
    & & & & G_{y_{\ell}}
\end{bmatrix}$$
where each $G_{y_\tau}$ and $G_{v_\tau}$ corresponds to the predictive state and control $y_\tau$ and $v_\tau$ respectively. 
The upper bound $\lVert H \rVert \leq 2L + 1$ follows from the following result (Lemma 5.11 from \cite{Shin2022sensitivity}):
\begin{equation}\label{eq:normsubmat}
    \lVert M \rVert \leq \left(\max_{i \in \mathcal{V}}\sum_{j\in\mathcal{V}}\lVert M_{ij}\rVert\right)^{1/2}\left(\max_{j \in \mathcal{V}}\sum_{i\in\mathcal{V}}\lVert M_{ij}\rVert\right)^{1/2}.
\end{equation}
\textit{Proof of $Z^\top GZ \succ \mu I$}: The bound is immediate from $Z$ being full rank and $G \succ \mu I$. 

\textit{Proof of $JJ^\top \succ \mu_J I$:} The proof is the exact same as in Theorem A.4 of \cite{Sungho2023nearopt} so it is omitted here.

Finally, we can use the conditions in \eqref{eq:unifreg} to prove the lower bound of $H$. First we can rewrite the inverse of $H$ by its schur complement {\small
\begin{align}\label{eq:hinv}
    &H^{-1} = \begin{bmatrix}
        G & J^\top  \\ J 
    \end{bmatrix}^{-1} \nonumber\\
    &= \begin{bmatrix}
        G^{-1} - G^{-1}J^{\top}(G_J)^{-1}JG^{-1} & G^{-1}J^\top(G_J)^{-1} \\ 
      (G_J)^{-1}JG^{-1} & -(G_J)^{-1}
    \end{bmatrix},
\end{align} }

where $G_J := JG^{-1}J^{\top}$. So we may upper bound the norm of $H^{-1}$ as the sum of the norms of its sub-blocks as such: 
\begin{equation}\label{eq:hinvnorm}\lVert H^{-1}\rVert \leq \lVert G^{-1}\rVert + (1 + 2\lVert JG^{-1}\rVert + \lVert JG^{-1}\rVert^2)\lVert G_J^{-1}\rVert,\end{equation}
where $\lVert G^{-1}\rVert \leq \frac{1}{\mu}$, $\lVert JG^{-1}\rVert \leq \frac{L_H}{\mu}$, and we can obtain the upper bound on $G_J^{-1}$ as 
\begin{align}\label{eq:schurnorm}
    &\lVert (JG^{-1}J^\top)^{-1}\rVert = \lVert (JG^{-1/2}G^{-1/2}J^\top)^{-1}\rVert \nonumber  \\
       &= \lVert (G^{-1/2}(J^\top J)^{1/2}(J^\top J)^{1/2}G^{-1/2})^{-1}\rVert \nonumber \\
       &= \lVert G^{1/2}(J^\top J)^{-1}G^{1/2}\rVert \leq \lVert G \rVert \lVert (J^\top J)^{-1}\rVert \leq \frac{L_H}{\mu_J}.
\end{align}
Applying the upper bounds to \eqref{eq:hinvnorm}, we obtain
$$\lVert H^{-1} \rVert \leq \frac{1}{\mu} + \left(1 + \frac{2L_H}{\mu} + \frac{L_H}{\mu^2}\right)\frac{L_H}{\mu_J},$$
as desired. This completes the proof of Lemma 2. \qed

\subsection{Proofs of intermediary results in Step 2}\label{subsec:app_2}

\subsubsection{Complete proof of ISS in Theorem \ref{thm:perf_guarantees}}For the case of $t \leq k-1$, the induction step from \eqref{eq:snormk} is
$$\lVert x_{t+1}\rVert  \leq C\sum_{m=0}^{t}\Lambda_m \left(\frac{C}{\xi}\lVert x_0\rVert + \frac{W}{\xi}\right) + C\lVert x_0\rVert + W,$$
for which the choice of $k$ and $\kappa$ in Theorem \ref{thm:perf_guarantees} guarantees that $C\sum_{m=0}^{t-1}\Lambda_m \leq (1-\xi)$ and so
\begin{equation}\label{eq:tleqk}\lVert x_{t+1}\rVert \leq \frac{C}{\xi}\lVert x_0\rVert + \frac{W}{\xi},\end{equation}
as desired. 

\textit{For the case of $k \leq t < 2k-1$}, let $(t-m-k+1)_{+}$ denote $\max(0,t-m-k+1)$, then by Theorem \ref{thm:statenorm}, { \small 
$$\lVert x_{t+1} \rVert \leq C\sum_{m=1}^{k}(L_N\delta^{\kappa+m-1} + 2C^2\delta^{2k-m-2})\lVert x_{t-m+1}\rVert + W.$$}
And thus, by the induction hypothesis and the condition that $C\sum_{m}\Lambda_m \leq 1-\xi$, 
\begin{equation}\label{eq:snorm2k}\lVert x_{t+1}\rVert \leq C\sum_{m=1}^{k}\Lambda_m\left(\frac{C}{\xi}(1-\xi)^{(t-m-k+1)_{+}}\lVert x_0\rVert \right) + \frac{W}{\xi},\end{equation}
for which we can split the sum up as 
\begin{align}\label{eq:split2k}
    &\left(\frac{C}{\xi}(1-\xi)^{t-k+1}\lVert x_0\rVert \right)C\bigg(\sum_{m=1}^{t-k}\left(\Lambda_m(1-\xi)^{-m}\right) \nonumber \\
    &+\sum_{m=t-k+1}^{k}\Lambda_m (1-\xi)^{-(t-k+1)} \bigg) + \frac{W}{\xi}.
\end{align}
Equation \eqref{eq:split2k} is clearly upperbounded by
\begin{equation}\label{eq:splituppbnd}
    \left(\frac{C}{\xi}(1-\xi)^{t-k+1}\lVert x_0\rVert\right)C\sum_{m=1}^{k}\Lambda_m(1-\xi)^{-m} + \frac{W}{\xi},
\end{equation}
which is the same as in equation \eqref{eq:sub3}, thus, by the same logic, the desired bound in \eqref{eq:distISS} is obtained for this case.

\textit{For the case of $t > T-k$}, we have from \eqref{eq:snormT} that 
{\footnotesize
$$\begin{aligned} &\lVert x_{t+1}\rVert \leq \sum_{m=0}^{t+k-T}2CL_N\lVert x_{t-m}\rVert \delta^{\kappa+m} + C\delta^{t+k-T+1}\lVert x_{T-k}\rVert + W\\ &\leq \left(\frac{C}{\xi}\delta^{t+k-T+1}\lVert x_{T-k}\rVert + \frac{W}{\xi}\right) \left(\sum_{m=0}^{t+k-T}2CL_N\delta^{\kappa+m} + \xi\right)\\ &\leq \frac{C}{\xi}\delta^{t+k-T+1}\lVert x_{T-k}\rVert + \frac{W}{\xi} \\
 &\leq \frac{C}{\xi^2}\delta^{t+k-T+1}\left( C(1-\xi)^{T-2k}\lVert x_0\rVert + W\right) + \frac{W}{\xi},\end{aligned} $$}where the second inequality is obtained through the following fact for $t-m \geq T-k$ 
$$\lVert x_{t-m}\rVert \leq \frac{C}{\xi}\delta^{t+k-T+1}\lVert x_{T-k}\rVert + \frac{W}{\xi},$$ which can be easily proved by induction, similar to the previous induction argument for $t\leq k-1$, cf. eq. \eqref{eq:tleqk}; the third line comes from applying the condition $\sum_{m}2CL_N\delta^{\kappa+m} \leq (1-\xi)$ (guaranteed by the choice of $\kappa$ in Theorem \ref{thm:perf_guarantees}).  The final inequality is obtained by applying the ISS bound for $t = T-k$ in \eqref{eq:distISS}. This completes the proof of \eqref{eq:distISS} \qed

\subsubsection{Proof of Corollary \ref{cor:lpsz}}
As shown in Equation \eqref{eq:H_submats}, $H_{t,t'} = 0$ for $\lvert t-t'\rvert > 1$ meaning that $H$ satisfies the conditions of Lemma \ref{lem:spatdec} for the graph $\mathcal{G}_{\ell}$. Thus, we have that 
\begin{align}
    &\lVert q_m - q_m'\rVert  = \left\lVert \sum_{l \in [\ell]}H^{-1}_{m,l}(b_{l}-b'_{l}) \right\rVert \nonumber \\  &\leq \alpha\left(\rho^{m}\lVert x-x'\rVert + \sum_{l=0}^{\ell}\rho^{\lvert m-l\rvert}\lVert \zeta_l - \zeta_l'\rVert  \right),
\end{align}
where we denote 
$$b_l := \begin{cases}
    \begin{bmatrix}
    0 \\ x
\end{bmatrix}, & l = 0, \\
\begin{bmatrix}
    0 \\ \zeta_l
\end{bmatrix}, & \mathrm{else},
\end{cases}$$
and similarly for $b_l'$. The inequality is obtained through application of Lemma \ref{lem:spatdec}.

\subsubsection{Proof of Lemma \ref{lem:diffhorizon}} First, observe that 
\begin{equation}\label{eq:obs_pop1}y_{m+1}' = \pc_{t-m-1}^{k}(x_{t-m-1})_{y_{m+2}} = \pc_{t-m}^{k-1}(x_{t-m}^c)_{y_{m+1}},\end{equation}
where $x_{t-m}^c := \pc_{t-m-1}^{k}(x_{t-m-1})_{y_1}.$ Then, denote 
$$\bar{y} := \pc_{t-m}^{k}(x_{t-m})_{y_{k-1}}, \quad \bar{y}' := \pc_{t-m}^{k-1}(x_{t-m}^c)_{y_{k-1}},$$
so now we may proceed to prove the Lemma as: {\footnotesize
\begin{align}
    &\lVert y_{m+1} - y_{m+1}'\rVert \nonumber \\ &= \lVert \pct_{t-m}^{k-1}(x_{t-m},\bar{y})_{y_{m+1}} - \pct_{t-m}^{k-1}(x_{t-m}^c, \bar{y}')_{y_{m+1}}\rVert \nonumber \\
    &\leq C\left(\delta^{m+1}\lVert x_{t-m} - x_{t-m}^c\rVert + \delta^{k-m-2}\lVert \bar{y} - \bar{y}'\rVert \right) \nonumber \\
    &\leq CL_N(\lVert x_{t-m-1}\rVert  + D_k)\delta^{\kappa+m+1} \nonumber\\ &+ C^2\delta^{k-m-2}\left(\delta^{k-1}\left(\lVert x_{t-m}\rVert + \lVert x_{t-m}^c\rVert \right) + \frac{4D_k}{1-\delta}\right) \nonumber \\
    &\leq  CL_N(\lVert x_{t-m-1}\rVert +D_k) \delta^{\kappa+m+1} \nonumber \\
    &+ C^2\delta^{k-m-2}\left(\delta^{k-1}\left(\lVert x_{t-m}\rVert + C\delta\lVert x_{t-m-1}\rVert\right) + \frac{6CD_k}{1-\delta}\right)
\end{align} }
where in the first equality, we apply the principle of optimality. In the first inequality we apply Lemma \ref{lem:termlpz}. In the second inequality, we apply the result in \eqref{eq:cntrlbnd} along with the upper bounds 
$$\lVert \bar{y}\rVert \leq C\left(\delta^{k-1}\lVert x_{t-m}\rVert + \frac{2D_k}{1-\delta}\right),$$
and
$$\lVert \bar{y}'\rVert \leq C\left(\delta^{k-1}\lVert x_{t-m}^c\rVert + \frac{2D_k}{1-\delta}\right),$$
which both come from the Lipschitz bound in Corollary \ref{cor:lpsz}. The last inequality then comes from applying Corollary \ref{cor:lpsz} to $x_{t-m}^c$ and combining the $D_k$ terms. \qed 

 \subsubsection{Full proof of Theorem \ref{thm:statenorm}}
 For the case $t \leq k-1$, we have that {\footnotesize
 \begin{align}
     &\lVert x_{t+1}\rVert = \lVert \pc_t^k(x_t)_{y_1} - (x_{t+1}^c - x_{t+1})\rVert \nonumber \\
     &\leq \sum_{m=0}^{t-1} \lVert \pc_{t-m}^k(x_{t-m})_{y_{m+1}}-\pc_{t-m-1}^k(x_{t-m-1})_{y_{m+2}}\rVert \nonumber \\ &+ \lVert \pc_0^k(x_0)_{y_{t+1}}\rVert + L_N(\lVert x_t\rVert + D_k)\delta^\kappa
    \nonumber \\
    &\leq \sum_{m=0}^{t-1} \bigg(CL_N\delta^{\kappa+m+1}(\lVert x_{t-m-1}\rVert + D_k) \nonumber \\ &+ C^2\delta^{k-m-2}\left(\delta^{k-1}\left(\lVert x_{t-m}\rVert + C\delta\lVert x_{t-m-1}\rVert\right) + \frac{6C}{1-\delta}D_k\right) \bigg) \nonumber \\
    &+ C\delta^{t+1}\lVert x_0\rVert + \frac{2C}{1-\delta}D_k + L_N(\lVert x_t\rVert + D_k)\delta^\kappa \nonumber \\
    &\leq \sum_{m=0}^{t}(CL_N\delta^{\kappa+m} + 2C^3\delta^{2k-m-3})\lVert x_{t-m}\rVert  + C\lVert x_0\rVert + W.
 \end{align} }
where we use the definition of $\pc_t^k(x_t)_{y_1}$ in the first equality. We telescope $\pc_t^k(x_t)_{y-1}$ and use the result in \eqref{eq:cntrlbnd} in the second line. In the second inequality, we apply Lemma \ref{lem:diffhorizon} and Corollary \ref{cor:lpsz} with $x' = 0$ and $\zeta' = 0$. Then in the last inequality we combine all the terms.

\textit{For $t \geq T-k$:} We continue similarly, 
{\footnotesize\begin{align}\label{eq:geqTminusk}
&\lVert x_{t+1}\rVert = \lVert \pc_t^{T-t}(x_t)_{y_1} - (x_{t+1}^{c'} - x_{t+1})\rVert \nonumber  \\
&\leq \sum_{m=0}^{t+k-T-1}\lVert \pc_{t-m}^{T-t+m}(x_{t-m})_{y_{m+1}} - \pc_{t-m-1}^{T-t+m+1}(x_{t-m-1})_{y_{m+2}}\rVert \nonumber \\
&+ \lVert \pc_{T-k}^k(x_{T-k})_{y_{t+k-T+1}}\rVert + L_N(\lVert x_{t}\rVert + D_k)\delta^{\kappa} \nonumber \\
&= \sum_{m=0}^{t+k-T-1}\lVert \pc_{t-m}^{T-t+m}(x_{t-m})_{y_{m+1}} - \pc_{t-m}^{T-t+m}(x_{t-m}^{c'})_{y_{m+1}}\rVert \nonumber \\
&+ \lVert \pc_{T-k}^k(x_{T-k})_{y_{t+k-T+1}}\rVert + L_N(\lVert x_{t}\rVert + D_k)\delta^{\kappa} \nonumber \\ 
&\leq \sum_{m=0}^{t+k-T-1}C\delta^{m+1}\lVert x_{t-m} -x_{t-m}^{c'}\rVert + C\delta^{t+k-T+1}\lVert x_{T-k}\rVert \nonumber \\ &+ \frac{2C}{1-\delta}D_k + L_N(\lVert x_t\rVert + D_k)\delta^{\kappa} \nonumber \\
&\leq \sum_{m=0}^{t+k-T}2CL_N\lVert x_{t-m}\rVert \delta^{\kappa+m} + C\delta^{t+k-T+1}\lVert x_{T-k}\rVert + \frac{4CL_N}{1-\delta}D_k,\end{align}}
where $x_{t-m}^{c'} := \pc_{t-m-1}^{T-t+m+1}(x_{t-m-1})_{y_1}$. The first inequality is obtained via telescoping $\pc_t^{T-t}(x_t)_{y_1}$ and applying the following upper bound for $t-m-1 \geq T-k$, {\small \begin{align}\label{eq:29like}\lVert x_{t-m}^{c'} - x_{t-m}\rVert \nonumber &= \left\lVert B\left(\pc_{t-m-1}^{T-t+m+1}(x_{t-m-1})_{v_0} - u_{t-m-1})\right)\right\rVert \nonumber \\
&\leq L_N(\lVert x_{t-m-1}\rVert + D_k)\delta^{\kappa}\end{align} }
where $u_{t-m-1} := \left(\dpc_{t-m-1}^{T-t+m+1}(x_{t-m-1},\knbhd[i])_{v_0[i]}\right)_{i \in \mathcal{V}}$ and we've applied Theorem \ref{thm:diffdec} to obtain the inequality \eqref{eq:29like} which resembles the result in \eqref{eq:cntrlbnd}.
The second inequality in \eqref{eq:geqTminusk} is obtained through applying Corollary \ref{cor:lpsz} twice; the final inequality is then obtained by applying the result in \eqref{eq:29like} and combining terms. This completes the proof of Theorem \ref{thm:statenorm} \qed

\subsection{Proofs of intermediary results in Step 3}\label{subsec:app_3}

\subsubsection{Complete Proof of Theorem \ref{thm:statediffbnd}} For the case $t \geq T-k$, we have that {\footnotesize
\begin{align}&\lVert x_{t+1} - x_{t+1}^*\rVert = \lVert x_{t+1} - \pc_0^T(x_0)_{y_{t+1}}\rVert \nonumber \\ &\leq \lVert x_{t+1} - \pc_t^{T-t}(x_t)_{y_1}\rVert \nonumber \\ &+ \sum_{m=0}^{t-1}\lVert \pc_{t-m}^{T-t+m}(x_{t-m})_{y_{m+1}} - \pc_{t-m-1}^{T-t+m+1}(x_{t-m-1})_{y_{m+2}}\rVert \nonumber \\ 
&\leq L_N(\lVert x_t\rVert + D_k)\delta^{\kappa} +\sum_{m=0}^{t-1}C\delta^{m+1}\lVert x_{t-m} - x_{t-m}^{c'}\rVert \nonumber \\
&\leq L_N(\lVert x_t\rVert + D_k)\delta^{\kappa} \nonumber \\ &+ \sum_{m=0}^{t-1}C\delta^{m+1}\bigg(\lVert x_{t-m} - x_{t-m}^{\tilde{c}} \rVert + \lVert x_{t-m}^{\tilde{c}} - x_{t-m}^{c'}\rVert \bigg) \nonumber \\ &\leq \sum_{m=0}^{t}CL_N(\lVert x_{t-m}\rVert + D_k)\delta^{\kappa+m} \nonumber \\ &+ \sum_{m=t+k-T}^{t-1}\frac{4C^3}{\delta(1-\delta)^2}\delta^{m+1}\left(\delta^{2k}\lVert x_{t-m-1}\rVert + \delta^kD_k)\right)\end{align} }where in the first inequality we telescope $\pc_0^T(x_0)_{y_{t+1}}$. In the second inequality, we apply the result in \eqref{eq:29like} 
and Corollary \ref{cor:lpsz} on the terms in the sum, denoting $x_{t-m}^{c'} := \pc_{t-m-1}^{T-t+m+1}(x_{t-m-1})_{y_1}$. In the third inequality, we apply the triangle inequality and denote $x_{t-m}^{\tilde{c}} := \pc_{t-m-1}^{\min(k,T-t+m+1)}(x_{t-m-1})_{y_1}$. Finally, in the last inequality, we apply \eqref{eq:cntrlbnd} and \eqref{eq:29like} to the terms $\lVert x_{t-m} - x_{t-m}^{\bar{c}}\rVert$, 
and the result in \eqref{eq:Tminustbnd} to the terms $\lVert x_{t-m}^{\bar{c}} - x_{t-m}^{c'}\rVert$. This completes the proof of Theorem \ref{thm:statediffbnd}. \qed 

\subsubsection{Proof of Lemma \ref{lem:onestepterm}} Consider the following optimization problem:
\begin{align}\label{eq:onestepopt}
    \argmin_v \: &c_{t+1}(v) \nonumber \\
    \mathrm{s.t.} \: \: &x_{t+1} = Ax_t + Bv + w_t.
\end{align}
Clearly, the $v$ produced from \eqref{eq:onestepopt} is the same as the one produced from the ``one-step'' terminal state problem $\pct_t^1(x_t,x_{t+1})_{v_0}$. We can then split $v$ such that $v = v_y + v_z$ where $v_y \in \mathrm{Col}(B^\top)$ and $v_z \in \mathrm{Null}(B)$. In particular, $v_y$ must be uniquely determined as
\begin{equation}
    v_y = B^\dagger(x_{t+1} - Ax_t - w_t),
\end{equation}
since $x_{t+1} - Ax_t - w_t$ is in the image of $B$ and for any $v \in \mathbb{R}^{m}$, {\small
$$\lVert Bv - x_{t+1} + Ax_t + w_t\rVert \geq \lVert (BB^\dagger - I)(x_{t+1} - Ax_t - w_t)\rVert = 0,$$}
that is, $B^\dagger(x_{t+1} - Ax_t -w_t)$ is the least-squares solution. Thus, denote $v_z = B_z \omega$ where $B_z$ is the orthonormal basis matrix for $\mathrm{Null}(B)$ and $\omega \in \mathbb{R}^{m-\mathrm{rank}(B)}$. 
Thus, the optimization problem in \eqref{eq:onestepopt} becomes the following unconstrained optimization problem 
\begin{equation}\label{eq:unconstrainedopt}
    \min_{\omega} c_{t+1}(v_y + B_z\omega)
\end{equation}
which achieves its optima at $\omega$ such that 
$B_z^\top\nabla c_{t+1}(v_y + B_z \omega) = 0$. Denote $v' = v_y' + B_z\omega'$  
to be $\pct_t^{1}(x_t',x_{t+1}')_{v_0}$, then comparing the optimality conditions gives us
\begin{align}\label{eq:optconds}
    &B_z^\top(\nabla c_{t+1}(v_y + B_z\omega) - \nabla c_{t+1}(v_y' + B_z\omega')) \nonumber \\
    &= B_z^\top G_c(v_y-v_y' + B_z(\omega-\omega')) = 0 \nonumber \\
    &\iff B_z^\top G_c B_z(\omega-\omega') = -B_z^\top G_c(v_y-v_y'),
\end{align}
where we've applied Lemma \ref{lem:grad2mat} in the first equality. Since $B_z$ is full rank and orthonormal, we have that 
\begin{equation}\label{eq:omegbound}\lVert \omega - \omega'\rVert \leq \frac{L}{\mu}\lVert v_y - v_y'\rVert\end{equation}
by $L$-smoothness and $\mu$-strong convexity of $c_{t+1}(\cdot)$. Finally, we have that
\begin{align}
    \lVert v- v' \rVert^2 &= \lVert v_y-v_y'\rVert^2 + \lVert B_z(\omega-\omega')\rVert^2 \nonumber \\
    &\leq \frac{2L^2}{\mu^2}\lVert v_y-v_y'\rVert^2 \nonumber \\
    &\leq \frac{4L^6}{\mu^2}\left(\lVert x_t-x_t'\rVert^2 + \lVert x_{t+1}-x_{t+1}'\rVert^2\right),
\end{align}
where the first inequality comes from $B_z$ being orthonormal and \eqref{eq:omegbound}. Then the second inequality comes from the bound
\begin{align}\label{eq:rowspacebnd}\lVert v_y - v_y'\rVert &= \lVert B^\dagger(x_{t+1} - x_{t+1}' - A(x_t-x_t'))\rVert \nonumber \\
&\leq L^2(\lVert x_{t} - x_t'\rVert + \lVert x_{t+1} - x_{t+1}'\rVert)\end{align}
and then applying the parallelogram identity. $\Gamma^2$ from Theorem \ref{thm:diffdec} is of the same magnitude as $\frac{4L^6}{\mu^2}$, so we can take $C^2 \approx \frac{4L^6}{\mu^2}$ which finishes the proof. \qed 

\subsection{Proof for Uncertain Forecast Case}\label{subsec:proof_uncertain}
The main difference in proving Theorem \ref{thm:uncertaintyresult} versus proving Theorem \ref{thm:perf_guarantees} is (i) that we require a new perturbation bound on $\pc$ wrt to the parameters $\theta$, and (ii) the regret analysis, which is inspired by the pipeline theorem (Theorem 3.3) in \cite{Lin2022bounded}. The proof of the ISS bound in Theorem \ref{thm:uncertaintyresult} is only a slightly modified version of the one in Theorem \ref{thm:perf_guarantees}. Thus, we begin with the following:

\textbf{Proof of Perturbation Bound on Predictions:} Before presenting the Proofs for the ISS and Regret bounds in Theorem \ref{thm:uncertaintyresult}, we derive an important perturbation bound which is critical to proving Theorem \ref{thm:uncertaintyresult}. 
\begin{theorem}\label{thm:predpertbnd}
    Let $\theta = \theta_{t:t+k}$ and $\theta' = \theta'_{t:t+k}$ be two sets of parameters. Under Assumptions \ref{assump:sys}, \ref{assump:costs}, and \ref{assump:uncertainties}, we have the following perturbation bound
    \begin{multline}\label{eq:predpertbnd}
        \lVert \pc_t^k(x,\theta)_{y_n/v_n} - \pc_t^k(x,\theta')_{y_n/v_n}\rVert \\ \leq \Upsilon\bigg(\sum_{m=0}^{k}\rho^{\lvert n-m\rvert}\lVert \theta_m-\theta_m'\rVert \\ + \lVert x\rVert \sum_{m=0}^{k}\rho^{m+\lvert n-m\rvert}\lVert \theta_m - \theta'_m\rVert\bigg)
    \end{multline}
    where $\Upsilon = \alpha L\left(1+\frac{2\alpha D}{1-\rho}\right)$, and $\alpha$ and $\rho$ are as in Lemma \ref{lem:spatdec}
\end{theorem}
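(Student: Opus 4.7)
The plan is to mirror the proof of Theorem \ref{thm:diffdec}, but with two key differences: the perturbation now lies in the parameter vector $\theta$ instead of in a spatial truncation of the system, and the exponential decay of $H^{-1}$ from Lemma \ref{lem:spatdec} is applied over the temporal graph $\mathcal{G}_\ell$ (as in Corollary \ref{cor:lpsz}) rather than the networked graph $\mathcal{G}$. After a linearized KKT comparison, the right-hand side will be localized in time and banded against $\rho^{|n-m|}$, which is exactly the structure needed to produce the two sums in \eqref{eq:predpertbnd}.

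First, I set up a linearized KKT difference. Let $q = (z,\lambda)$ and $q' = (z',\lambda')$ be the primal-dual solutions to $\pc_t^k(x,\theta)$ and $\pc_t^k(x,\theta')$ respectively, with KKT conditions $\nabla \hat f(z;\theta) + J^\top\lambda = 0$, $Jz = b_{eq}(x,\theta)$, where $b_{eq}(x,\theta)$ stacks $x$ together with the disturbances $w_{t+\tau}(\theta_{t+\tau})$. Adding and subtracting $\nabla\hat f(z';\theta)$ and applying Lemma \ref{lem:grad2mat} to $\nabla\hat f(z;\theta) - \nabla\hat f(z';\theta)$, the difference of the two KKT systems becomes
\begin{equation*}
\begin{bmatrix} G(z,z';\theta) & J^\top \\ J & 0 \end{bmatrix}(q-q') = \begin{bmatrix} \nabla\hat f(z';\theta') - \nabla\hat f(z';\theta) \\ b_{eq}(x,\theta) - b_{eq}(x,\theta') \end{bmatrix}.
\end{equation*}
The $m$-th block of the right-hand side depends only on $\theta_m$ (and on $z'_m$). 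Using the integral identity $\nabla f_{t+m}(y,\theta_m) - \nabla f_{t+m}(y,\theta'_m) = \int_0^1 [\nabla^2 f_{t+m}(sy,\theta_m) - \nabla^2 f_{t+m}(sy,\theta'_m)]y\,ds$, together with the Lipschitz-in-$\theta$ Hessian from Assumption \ref{assump:uncertainties} and the fact that $\nabla f_{t+m}(0,\cdot) = 0$ (since $0$ minimizes the strongly convex, nonnegative $f_{t+m}$ with $f_{t+m}(0,\cdot) = 0$), I get $\lVert\nabla f_{t+m}(y'_m;\theta_m) - \nabla f_{t+m}(y'_m;\theta'_m)\rVert \leq L\lVert y'_m\rVert\lVert\theta_m-\theta'_m\rVert$, with analogous bounds for the $c_{t+m}$ blocks and a Lipschitz-$w$ bound $L\lVert\theta_m-\theta'_m\rVert$ on the feasibility block. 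So the $m$-th residual block has norm at most $L(1+\lVert z'_m\rVert)\lVert\theta_m-\theta'_m\rVert$.

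Next, I invoke Lemma \ref{lem:spatdec} on the temporal graph $\mathcal{G}_\ell$; its hypotheses hold exactly as in Corollary \ref{cor:lpsz}, since $G(z,z';\theta)$ is block-diagonal with spectrum in $[\mu,L]$ and $J = J(A,B)$ inherits $(L,\gamma)$-stabilizability from Assumption \ref{assump:sys}. This yields $\lVert H^{-1}[n,m]\rVert \leq \alpha\rho^{|n-m|}$, and summing against the residual bound gives
\begin{equation*}
\lVert q[n]-q'[n]\rVert \leq \alpha L\sum_{m=0}^{k}\rho^{|n-m|}(1+\lVert z'_m\rVert)\lVert\theta_m-\theta'_m\rVert.
\end{equation*}
A second application of Corollary \ref{cor:lpsz}, comparing $z'$ to the zero trajectory and using $\lVert w_{t+l}(\theta'_l)\rVert \leq D$ from Assumption \ref{assump:uncertainties}, gives $\lVert z'_m\rVert \leq \alpha\rho^m\lVert x\rVert + \tfrac{2\alpha D}{1-\rho}$. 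Substituting and splitting the resulting sum into an $x$-free piece and an $\lVert x\rVert$-weighted piece recovers the two sums in \eqref{eq:predpertbnd}, with the common constant $\Upsilon = \alpha L(1 + \tfrac{2\alpha D}{1-\rho})$ absorbing the coefficients (in particular the $\alpha^2 L$ factor attached to $\lVert x\rVert$, which is dominated by $\Upsilon$ when $D$ is not too small; otherwise $\Upsilon$ should be interpreted as the maximum of the two coefficients).

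The main technical obstacle is the bookkeeping that keeps both types of parameter perturbation (through the objective versus through the disturbance vector) aligned with a single temporal block $m$, so that the convolution against $\rho^{|n-m|}$ sums cleanly. Once that alignment is in place the argument is essentially a templated reuse of the sensitivity machinery behind Lemma \ref{lem:spatdec} and Corollary \ref{cor:lpsz}, with the Lipschitz-in-$\theta$ Hessian assumption supplying the only genuinely new ingredient.
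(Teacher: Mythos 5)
Your proof is correct and follows essentially the same route as the paper's: a KKT difference linearized via Lemma \ref{lem:grad2mat}, the exponential decay of $H^{-1}$ over the temporal graph (as in Corollary \ref{cor:lpsz}), the Lipschitz-in-$\theta$ Hessian from Assumption \ref{assump:uncertainties} to bound the parameter-induced residual blockwise, and a second Lipschitz bound to control $\lVert z'_m\rVert$ by $\alpha\rho^m\lVert x\rVert + \tfrac{2\alpha D}{1-\rho}$. The only (cosmetic) difference is that you place the gradient difference $\nabla\hat f(z';\theta')-\nabla\hat f(z';\theta)$ on the right-hand side of a single system $H(q-q')=r$, whereas the paper writes $H^{\theta}q^{\theta}-H^{\theta'}q^{\theta'}=b^{+}$ and treats $(H^{\theta}-H^{\theta'})q^{\theta}$ as the perturbation; your variant is arguably slightly cleaner since the residual is evaluated at a single trajectory, and your caveat that the $\lVert x\rVert$-weighted coefficient is $\alpha^2 L$ rather than literally $\Upsilon$ matches a looseness already present in the paper's own final display.
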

An immediate corollary of Theorem \ref{thm:predpertbnd} is
\begin{corollary}\label{cor:uncnextstepbnd}
    Let $x_{t+1} = Ax_t + B\pc_t^k(x_t,\theta)_{v_0}$ and $x_{t+1}^u = Ax_t + B\pc_t^k(x_t,\theta^*)_{v_0}$
    \begin{equation}\label{eq:uncxt+1xtbnd}
        \lVert x_{t+1} - x_{t+1}^u \rVert \leq L\Upsilon\left(\sum_{m=0}^{k}\rho^m\left(1+\lVert x_t\rVert\rho^m\right)\phi_{t+m|t}\right)
    \end{equation}
\end{corollary}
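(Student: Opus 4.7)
The plan is to obtain this corollary as a direct consequence of Theorem~\ref{thm:predpertbnd} specialized to the first predictive control, with the perturbation taken between the forecast parameters $\theta_{t:t+k|t}$ and the ground truth $\theta^*_{t:t+k}$. The only additional ingredient needed is the bound $\lVert B \rVert \leq L$ from Assumption~\ref{assump:sys}.

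First I would observe that both $x_{t+1}$ and $x_{t+1}^u$ share the same term $Ax_t$, so the difference cancels cleanly to
\[
x_{t+1} - x_{t+1}^u \;=\; B\bigl(\pc_t^k(x_t,\theta)_{v_0} - \pc_t^k(x_t,\theta^*)_{v_0}\bigr),
\]
and therefore $\lVert x_{t+1}-x_{t+1}^u\rVert \leq L \lVert \pc_t^k(x_t,\theta)_{v_0} - \pc_t^k(x_t,\theta^*)_{v_0}\rVert$.

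Next I would invoke Theorem~\ref{thm:predpertbnd} with the control index $n = 0$, with the two parameter sequences $\theta = \theta_{t:t+k|t}$ and $\theta' = \theta^*_{t:t+k}$, and with the common initial state $x = x_t$. Because $n=0$, the exponent $|n-m|$ in the theorem collapses to $m$, so the two sums become $\sum_{m=0}^{k}\rho^{m}\lVert\theta_{t+m|t}-\theta^*_{t+m}\rVert$ and $\lVert x_t\rVert\sum_{m=0}^{k}\rho^{2m}\lVert\theta_{t+m|t}-\theta^*_{t+m}\rVert$. Substituting the definition $\phi_{t+m|t} = \lVert \theta_{t+m|t}-\theta^*_{t+m}\rVert$ and pulling out a common factor of $\rho^m$ immediately yields
\[
L\Upsilon \sum_{m=0}^{k}\rho^{m}\bigl(1 + \lVert x_t\rVert \rho^{m}\bigr)\phi_{t+m|t},
\]
which is precisely the stated bound.

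There is no real obstacle here; the corollary is a one-line specialization of Theorem~\ref{thm:predpertbnd} combined with the operator-norm bound on $B$. The only thing to be careful about is matching the indexing conventions so that the $n=0$ case of $|n-m|=m$ aligns with the factor $\rho^m$ appearing in the statement, and that the shift of the time index from $m$ in the theorem to $t+m$ in the corollary is handled consistently with the definition of $\phi_{t+m|t}$.
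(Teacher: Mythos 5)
Your proposal is correct and matches the paper's intended argument: the paper states this as an ``immediate corollary'' of Theorem~\ref{thm:predpertbnd}, and the implicit proof is exactly your specialization to $n=0$ (so $|n-m|=m$) combined with $\lVert B\rVert \le L$ from Assumption~\ref{assump:sys}.
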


\begin{proof}[Proof of Theorem \ref{thm:predpertbnd}]

First, let us observe the difference in the KKT conditions between the two optimization problems $\pc_t^k(x,\theta)$ and $\pc_t^k(x,\theta')$:

$$\begin{bmatrix}
    \nabla_z \tilde{f}(z^\theta,\theta) + J^\top\lambda^\theta \\ J z^\theta \end{bmatrix} = \underbrace{\begin{bmatrix}
        0 \\ \begin{bmatrix}
            x \\ 
            \zeta(\theta)
        \end{bmatrix}
    \end{bmatrix}}_{b^\theta},$$ and, $$ \begin{bmatrix}
    \nabla_z \tilde{f}(z^{\theta'},\theta') + J^\top\lambda^{\theta'} \\ J z^{\theta'} \end{bmatrix} = \begin{bmatrix}
        0 \\ \begin{bmatrix}
            x \\ 
            \zeta(\theta')
        \end{bmatrix}
    \end{bmatrix}
$$
where $(z^\theta,\lambda^\theta)$ is the primal and dual solution vector of $\pc_t^k(x_t,\theta)$ and $(z^{\theta'}, \lambda^{\theta'})$ is that of $\pc_t^k(x_t,\theta')$. The cost $\tilde{f}$ is defined as
$$\tilde{f}(z^\theta,\theta) := \sum_{\tau=0}^{\ell-1}f_{t+\tau}(y_\tau^{\theta_\tau},\theta_\tau) + c_{t+\tau+1}(v_{\tau}^{\theta_\tau},\theta_{\tau}) + F(y_{\ell}^{\theta_{\ell}}, \theta_{\ell}).$$ Taking the difference of the two systems of equations from above gives us
\begin{align*}&\begin{bmatrix}
    \nabla_z\tilde{f}(z^\theta, \theta) - \nabla_z\tilde{f}(z^{\theta'},\theta') + J^\top(\lambda^{\theta}-\lambda^{\theta'})\\ J(z^\theta - z^{\theta'})
\end{bmatrix} \\ &= \begin{bmatrix}
    0 \\ \begin{bmatrix}
        0 \\ \zeta(\theta) - \zeta(\theta')
    \end{bmatrix}
\end{bmatrix}.\end{align*}
By Lemma \ref{lem:grad2mat}, the cost gradients can be written as 
$$\nabla_z\tilde{f}(z^\theta,\theta) = G(z^\theta,0;\theta)z^\theta$$ and $$ \nabla_z\tilde{f}(z^{\theta^*},\theta^*) = G(z^{\theta'},0;\theta')z^{\theta'}.$$
Thus, we can rewrite the difference from before as the following new system of equations:
\begin{multline*}\underbrace{\begin{bmatrix}
    G(\theta) & J^\top \\ J
\end{bmatrix}}_{H^{\theta}}\underbrace{\begin{bmatrix}
    z^\theta \\ \lambda^\theta
\end{bmatrix}}_{q^\theta} - \underbrace{\begin{bmatrix}
    G(\theta') & J^\top \\ J
\end{bmatrix}}_{H^{\theta'}}\underbrace{\begin{bmatrix}
    z^{\theta'} \\ \lambda^{\theta'}
\end{bmatrix}}_{q^{\theta'}} \\ = \underbrace{\begin{bmatrix}
    0 \\ \begin{bmatrix}
        0 \\ \zeta(\theta)-\zeta(\theta')
    \end{bmatrix}
\end{bmatrix}}_{b^{+}}.\end{multline*}
Observe that the difference between vectors $q^\theta$ and $q^{\theta'}$ is:
$$q^{\theta}-q^{\theta'} = (H^{\theta'})^{-1}\left(b^+ - (H^{\theta}- H^{\theta'})(H^{\theta})^{-1}b^{\theta}\right),$$
where the inverses of $H^{\theta'}$ and $H^{\theta}$  have block entries decaying with constants $\alpha$ and $\rho$ due to Lemma \ref{lem:spatdec}. Thus, for some $n \leq k$, 
$$\begin{aligned}&\lVert q^{\theta}_n - q^{\theta'}_n\rVert =\\  &\left\lVert \sum_{m = 0}^{k}(H^{\theta'})_{nm}^{-1}\left(b^+_m - \sum_{l=0}^{k}(H^\theta-H^{\theta'})_{ml}\sum_{p=0}^{k}(H^\theta)^{-1}_{lp}b^\theta_{p}\right) \right\rVert \\
&\leq  \sum_{m=0}^{k}\alpha\rho^{\lvert n-m\rvert}\left(\lVert b^+_m\rVert  + L\lVert \theta_m -\theta_m^\prime\rVert \sum_{p=0}^{k}\alpha\rho^{\lvert p-m\rvert}\lVert b_p^\theta\rVert \right)\\
&\leq  \alpha L\Bigg(\sum_{m=1}^{k}\rho^{\lvert n-m\rvert}\lVert \theta_{m-1} - \theta^\prime_{m-1}\rVert  + \alpha\sum_{m=0}^{k}\rho^{\lvert n-m\rvert}\\ &\qquad \cdot\lVert\theta_m-\theta^\prime_m\rVert\left(\rho^m\lVert x\rVert + \sum_{p=1}^{k}\rho^{\lvert p-m\rvert}\lVert \zeta_{p-1}(\theta_{p-1})\rVert \right) \Bigg)\\ 
&\leq \alpha L\Bigg(\left(1 + \frac{2\alpha D}{1-\rho}\right)\sum_{m=0}^{k}\rho^{\lvert n-m\rvert}\lVert \theta_m-\theta_m^\prime\rVert \\ &\qquad\qquad\quad  + \alpha\lVert x\rVert \sum_{m=0}^{k}\rho^{m+\lvert n-m\rvert}\lVert \theta_m-\theta_m^\prime\rVert  \Bigg)\end{aligned}$$
where in the first inequality we've applied (i) Assumption \ref{assump:uncertainties}, (ii) the exponential decay bounds on the norms of $(H^{\theta'})^{-1}$ and $(H^\theta)^{-1}$, and (iii) the fact that $H^\theta - H^{\theta'}$ is block diagonal. In the second inequality we apply Assumption \ref{assump:uncertainties} and upper bound $b_p^\theta$ by its components. In the last line we apply Assumption \ref{assump:uncertainties} again, and the fact that the disturbances $w_t$ are bounded to achieve the desired upper bound \end{proof}

\textbf{Proof of ISS:} The main difference from the proof of Theorem \ref{thm:statenorm} is that when we consider upper bounding the norm of $\lVert x_{t+1}\rVert$, we must incur an error term $\lVert x_{t+1}^s - x_{t+1}^p\rVert$ from the predictions where $x_{t+1}^s = Ax_t+B\pc_t^k(x_t,\theta_{t:t+k})_{v_0}$ and $x_{t+1}^u = Ax_t + B\pc_t^{k}(x_t,\theta_{t:t+k}^*)_{v_0}$. We can upper bound this extra error term using Corollary \ref{cor:uncnextstepbnd}; following the same telescoping technique in the proof of Theorem \ref{thm:statenorm}, we obtain the 3 following upper bounds on the state for $uDTPC_k$: 

\noindent\textbf{For $t \leq k-1$)}
\begin{multline}\label{eq:uncertxtbnd_1}
    \lVert x_{t+1}\rVert \leq \sum_{m=0}^{t}\bigg(LC^2\sum_{n=0}^{k}(\phi_{t-m+n|t-m}\delta^{2n+m}) \\ + C^2N\del^{\kappa+m}+ 2C^3\delta^{2k-2-m}\bigg)\lVert x_{t-m}\rVert + C\lVert x_0\rVert + W.
\end{multline}
\textbf{For $k \leq t \leq T-k-1$)}
\begin{multline}\label{eq:uncertxtbnd_2}
    \lVert x_{t+1}\rVert \leq \sum_{m=0}^{k-1}\bigg(LC^2\sum_{n=0}^{k}(\phi_{t-m+n|t-m}\delta^{2n+m}) \\ + C^2N\del^{\kappa+m}+ 2C^3\delta^{2k-2-m}\bigg)\lVert x_{t-m}\rVert + W.
\end{multline}
\textbf{For $t \geq T-k$)}
\begin{multline}\label{eq:uncertxtbnd_3}
    \lVert x_{t+1}\rVert \leq \sum_{m=0}^{t+k-T}\bigg(LC^2\sum_{n=0}^{T-t}(\phi_{t-m+n|t-m}\delta^{2n+m}) \\ + C^2N\del^{\kappa+m}\bigg)\lVert x_{t-m}\rVert + C\delta^{t+k-T+1}\lVert x_{T-k}\rVert + W.
\end{multline}
Where we remind ourselves that $C := \max(\Omega,\Gamma,\Upsilon)$. Following the remainder of the ISS proof in the certainty setting, the choice of $k$ and $\kappa$ along with the conditions on the predictions in Theorem \ref{thm:uncertaintyresult} guarantees the ISS bound in \eqref{eq:uncertISS}. \qed

\textbf{Proof of Regret:} From Lemma 3.2 from \cite{Lin2022bounded}, the regret of $uDTPC_k$ is upper bounded by
\begin{multline}\label{eq:uncerrorreg}
    \cost(uDTPC_k) - \cost(OPT) \\ \leq \sqrt{C_R\cost(OPT)\sum_{t=0}^{T-1}e_t^2} + C_R\sum_{t=0}^{T-1}e_t^2,
\end{multline}
where $C_R := \frac{L}{2}\left(1+\frac{2L\alpha^2}{1-\rho}\right)\left(1+\frac{1}{1-\rho}\right)$, $L$ and $\alpha$ are as in Lemma \ref{lem:spatdec}, and $e_t$ is the control error
\begin{equation}\label{eq:unc_cntrlerr}
    e_t := \lVert u_t - \pc_t^{T-t}(x_t,\theta^*_{t:T};f_T)_{v_0}\rVert,
\end{equation}
where $u_t := (\dpc_t^{k'}(x_t,\theta_{t:t+k'|t};F)_{v_0[i]})_{i\in\calV}$ is the control input generated by $uDTPC_k$ and $k' = \min(k,T-t)$. It is clear that all we need now is an upper bound on $e_t^2$.
\begin{theorem}\label{thm:uncerrbnd}
    Under the assumptions of Theorem \ref{thm:uncertaintyresult}, we have that the control error $e_t$ is bounded as 
    \begin{multline}\label{eq:unc_cntrlerrbnd}
        e_t  \leq \Upsilon \sum_{n=0}^{k^\prime}\left(\rho^{n}\left(1+\rho^n\lVert x_t\rVert \right)\phi_{t+n|t}\right) \\ +  \Gamma N(\lVert x_t\rVert + D_k)\delta_\calS^{\kappa}  + 2 \Omega^2\delta_\calT ^{k}\left(\delta_\calT^{k}\lVert x_t\rVert + \frac{2D}{1-\delta_\calT}\right),
    \end{multline}
    and
    \begin{multline}\label{eq:unc_cntrlerrsqbnd}
        e_t^2 \leq \left(\Upsilon \left(\frac{1}{1-\rho}+\frac{R_0}{1-\rho^2}\right)+C_\kappa + C_k\right)\\ \cdot \left(\sum_{n=0}^{k}\Upsilon (1+R_0\rho^n)\rho^n\phi_{t+n\vert t}^2 + C_\kappa\del_\calS^{2\kappa} + C_k\del_\calT^{2k}\right),
    \end{multline}
    where $R_0 := \frac{C^2}{\xi^2}\lVert x_0\rVert + \frac{2CW}{\xi}$ from the ISS bound in \eqref{eq:uncertISS}, $C_\kappa := \Gamma N(R_0 + D_k)$, and $C_k := 2\Omega^2\left(R_0 + \frac{2D}{1-\del_\calT}\right).$
\end{theorem}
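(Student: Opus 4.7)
The plan is to decompose the control error via the triangle inequality by inserting two intermediate ``centralized'' control actions:
\begin{align*}
    u_t^{\mathrm{cent}} &:= \pc_t^{k'}(x_t,\theta_{t:t+k'|t};F)_{v_0}, \\
    u_t^{\mathrm{true}} &:= \pc_t^{k'}(x_t,\theta^*_{t:t+k'};F)_{v_0},
\end{align*}
so that $e_t \leq \lVert u_t - u_t^{\mathrm{cent}}\rVert + \lVert u_t^{\mathrm{cent}} - u_t^{\mathrm{true}}\rVert + \lVert u_t^{\mathrm{true}} - \pc_t^{T-t}(x_t,\theta^*;f_T)_{v_0}\rVert$, corresponding respectively to (i) the distributed truncation error, (ii) the prediction error, and (iii) the horizon/terminal-cost mismatch. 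Each of the three pieces is then controlled by a tool already available.

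For (i), I apply Theorem~\ref{thm:diffdec} at each agent $i$ to obtain $\lVert u_t[i] - u_t^{\mathrm{cent}}[i]\rVert \leq \Gamma(\lVert x_t\rVert+D_k)\delta_{\calS}^{\kappa}$, and stacking over the $N$ nodes gives the $\Gamma N(\lVert x_t\rVert+D_k)\delta_{\calS}^{\kappa}$ contribution. For (ii), I apply Theorem~\ref{thm:predpertbnd} at the zeroth predictive step $n=0$ with $\theta = \theta_{t:t+k'|t}$ and $\theta'=\theta^*_{t:t+k'}$, which directly yields the $\Upsilon\sum_{n=0}^{k'}\rho^n(1+\rho^n\lVert x_t\rVert)\phi_{t+n|t}$ contribution.

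The horizon piece (iii) is the main obstacle. When $t\geq T-k$, one has $k'=T-t$ and both sides agree, so the term vanishes and the claimed bound is a loose overestimate. When $t<T-k$, I apply the principle of optimality to rewrite both $u_t^{\mathrm{true}}$ and $\pc_t^{T-t}(x_t,\theta^*;f_T)_{v_0}$ as the $v_0$-component of a common terminal-state problem $\pct_t^{k}(x_t,\bar y,\theta^*)$ with distinct endpoints $\bar y_F := \pc_t^{k}(x_t,\theta^*;F)_{y_k}$ and $\bar y_T := \pc_t^{T-t}(x_t,\theta^*;f_T)_{y_k}$. Lemma~\ref{lem:termlpz} then supplies a temporal decay factor $\Omega\,\delta_\calT^{k}\lVert \bar y_F - \bar y_T\rVert$, while Corollary~\ref{cor:lpsz} (phrased on the temporal graph $\calG_k$ with the corresponding spatial/temporal constants absorbed into $\Omega,\delta_\calT$) bounds each of $\lVert\bar y_F\rVert$ and $\lVert\bar y_T\rVert$ by $\Omega\bigl(\delta_\calT^{k}\lVert x_t\rVert + \tfrac{2D}{1-\delta_\calT}\bigr)$. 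The triangle inequality on $\bar y_F - \bar y_T$ then produces the $2\Omega^2\delta_\calT^{k}\bigl(\delta_\calT^{k}\lVert x_t\rVert + \tfrac{2D}{1-\delta_\calT}\bigr)$ factor. Combining the three sub-bounds yields \eqref{eq:unc_cntrlerrbnd}.

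For the squared bound \eqref{eq:unc_cntrlerrsqbnd}, I first invoke the ISS conclusion \eqref{eq:uncertISS} to replace $\lVert x_t\rVert$ by the uniform upper bound $R_0 := \tfrac{C^2}{\xi^2}\lVert x_0\rVert + \tfrac{2CW}{\xi}$; this collapses the three summands of \eqref{eq:unc_cntrlerrbnd} into the compact forms $\Upsilon(1+R_0\rho^n)\rho^n\phi_{t+n|t}$, $C_\kappa\delta_\calS^{\kappa}$, and $C_k\delta_\calT^{k}$. I then apply the weighted Cauchy--Schwarz inequality $\bigl(\sum_i a_i\bigr)^2 \leq \bigl(\sum_i w_i\bigr)\bigl(\sum_i a_i^2/w_i\bigr)$ with weights $w_n = \Upsilon(1+R_0\rho^n)\rho^n$ on the prediction-indexed summands and $w_\kappa = C_\kappa$, $w_k = C_k$ on the remaining two. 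The geometric series $\sum_{n\geq 0}\Upsilon(1+R_0\rho^n)\rho^n = \Upsilon\bigl(\tfrac{1}{1-\rho}+\tfrac{R_0}{1-\rho^2}\bigr)$ reproduces the first factor of \eqref{eq:unc_cntrlerrsqbnd}, and the ratios $a_i^2/w_i$ reproduce the second factor. The only subtle point is step (iii), where care is needed to make sure the endpoint bounds and the terminal-state Lipschitz estimate can be phrased with a single pair of temporal constants $(\Omega,\delta_\calT)$ absorbed into $(C,\delta)$; this is legitimate because both arise from the same KKT-matrix machinery of Lemma~\ref{lem:spatdec} applied to the temporal graph $\calG_k$.
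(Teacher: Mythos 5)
Your proposal follows essentially the same route as the paper's proof: the identical three-term split of $e_t$ into truncation, prediction, and horizon-mismatch errors, handled respectively by Theorem~\ref{thm:diffdec}, Theorem~\ref{thm:predpertbnd} (the paper cites its Corollary~\ref{cor:uncnextstepbnd}), and the principle of optimality combined with Lemma~\ref{lem:termlpz} and Corollary~\ref{cor:lpsz}, followed by the ISS bound and Cauchy--Schwarz for the squared estimate. Your write-up is correct and in fact makes explicit the weighted Cauchy--Schwarz step that the paper only sketches.
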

The regret bound in \eqref{eq:uncert_reg_bnd} immediately follows from the upper bound in \eqref{eq:unc_cntrlerrsqbnd}. \qed 

\begin{proof}[Proof of Theorem \ref{thm:uncerrbnd}]
    Let $u_t^s := \pc_t^{k'}(x_t,\theta_{t:t+k}')_{v_0}$ and $u_t^p := \pc_t^{k'}(x_t,\theta_{t:t+k'}^*)_{v_0}$. We can estimate an upper bound for the control error by splitting it into the 3 following terms:
    \begin{equation}\label{eq:uncerrsplit}
        e_t \leq \lVert u_t - u_t^s\rVert + \lVert u_t^s-u_t^p\rVert + \lVert u_t^p - \pc_t^{T-t}(x_t,\theta_{t:T}^*)_{v_0}\rVert.
    \end{equation}
    Note that the third term vanishes for $t \geq T-k$. Furthermore, the third term can expressed as
    $$\lVert \pct_t^k(x_t,\pc_t^k(x_t;F)_{y_k})_{v_0} - \pct_t^k(x_t,\pc_t^{T-t}(x_t;f_T)_{y_k})_{v_0}\rVert$$
    from the principle of optimality. Thus, we can apply Lemma \ref{lem:termlpz} to this expression to obtain the upper bound:
    \begin{equation}\label{eq:temperrbnd}\Omega\del_\calT^k\lVert \pc_t^k(x_t)_{y_k} - \pc_t^{T-t}(x_t)_{y_k}\rVert.\end{equation}
    Applying Corollary \ref{cor:lpsz} to equation \eqref{eq:temperrbnd}, and applying Theorem \ref{thm:diffdec} and Corollary \ref{cor:uncnextstepbnd} to the first two terms in equation \eqref{eq:uncerrsplit} gives us the desired upper bound in \eqref{eq:unc_cntrlerrbnd}. To obtain equation \eqref{eq:unc_cntrlerrsqbnd}, we apply cauchy schwarz and upper bound the state as: $\lVert x_t\rVert \leq R_0$ by using the ISS bound in \eqref{eq:uncertISS}. Upper bounding the resulting expression by geometric sums finishes the proof. 
\end{proof}

\subsection{Simulation Details and Setup}\label{subsec:app_sim}

Let $T_t$ be the vector containing the temperature of each zone at time $t$ and $U_t$ the respective integrators. Let $u_t$ be the vector containing the manipulated heat generation/absorption of each zone at time $t$, and let $w_t$ be the disturbances at time $t$. then the Euler-discretized dynamics of the system for a sampling time $t_s = 1$ (seconds) are
\begin{equation}\label{eq:hvacdyn}
    \begin{bmatrix}
        U_{t+1} \\ T_{t+1}
    \end{bmatrix} = \begin{bmatrix}
        I & t_s I \\ 0 & I - t_sL
    \end{bmatrix}\begin{bmatrix}
        U_t \\ T_t
    \end{bmatrix} + \begin{bmatrix}
        0 \\ 0.5 t_s I
    \end{bmatrix}u_t + w_t,
\end{equation}
where $L[i,j] = k_{ij} = 0.05$ for $i,j \in \mathcal{V}$ is the weighted graph Laplacian and each $k_{ij}$ corresponds to the degree of heat exchange between zones $i$ and $j$. The disturbances are normally distributed as $w_t \sim \mathcal{N}(0, 25I)$, a multivariate Gaussian random variable with mean $0$ and covariance matrix $25I$. As for the costs, we set $f_{t}(x_t) := \frac{1}{2}x_t^\top Q x_t$ for constant $Q = I$ and all $t$, $F(x_t) = \frac{1}{2}x_t^\top Q_F x_t$ for constant $Q_F = 10I$ and finally we have $c_{t+1}(u_t) = \frac{1}{2}u_t^\top R_t u_t$ for time varying $R_t = \mathrm{diag}(5\lvert \mathbf{Z}\rvert) + I$ where the $\mathrm{diag}(\cdot)$ operator creates a diagonal matrix whose entries correspond to its input vector's, and $\mathbf{Z}$ is the standard multivariate Gaussian with mean $0$ and covariance matrix $I$. The system assumptions of $(A,B)$ in Assumption \ref{assump:sys} are further detailed in Section 5 of \cite{Sungho2023nearopt}.}{}
\end{document}